\documentclass[11pt]{article}
\usepackage{graphicx, color, epsfig, verbatim, subfigure, epsf} 
\usepackage{amsmath}
\usepackage{amssymb}
\usepackage{amsfonts}
\usepackage{amsthm}
\usepackage{latexsym}
\usepackage{fancyhdr,lastpage}
\usepackage{xspace}
\usepackage{setspace}
\usepackage[affil-it]{authblk}

\setcounter{tocdepth}{2}

\newtheorem{theorem}{Theorem}
\newtheorem{lemma}[theorem]{Lemma}
\newtheorem{corollary}[theorem]{Corollary}
\newtheorem{Pro}{Proposition}
\newtheorem*{Mth}{Theorem}
\theoremstyle{remark}

\newtheorem*{remark}{Remark}

\numberwithin{theorem}{section}
\numberwithin{Pro}{section}

\usepackage{fullpage}
\newcommand{\NN}{\mathbb{N}}
\newcommand{\ZZ}{\mathbb{Z}}

 \newcommand{\CC}{\mathbb{C}}
\newcommand{\DD}{\mathbb{D}}
\newcommand{\TT}{\mathbb{T}}
\newcommand{\EE}{\mathbb{E}}
\newcommand{\PP}{\mathbb{P}}
\newcommand{\RR}{\mathbb{R}}

\def\bb{\begin{color}{blue}}
\def\bg{\begin{color}{green}}
\def\br{\begin{color}{red}}
\def\eg{\end{color}}
\def\er{\end{color}}
\def\eb{\end{color}}

\let \leq \leqslant

\let \ge \geqslant
\let \geq \geqslant


\newcommand{\eps}{\epsilon}

\newcommand{\IIm}{\text{Im} \,}

\newcommand{\g}{\gamma}
\newcommand{\tg}{\tilde{\gamma}}
\newcommand{\tG}{\tilde{\Gamma}}

\newcommand{\dtt}{\, \dtt}

\newcommand{\capc}{\mathbf{c}}

\newcommand{\parsig}{{\boldsymbol \sigma}}

\begin{document}

\bibliographystyle{alpha}

\title{Small particle limits in a regularized Laplacian random growth model}
\author{Fredrik Johansson Viklund
\thanks{fjv@math.columbia.edu}}
\affil{Department of Mathematics, Columbia University}
\author{Alan Sola
\thanks{a.sola@statslab.cam.ac.uk}}
\affil{Statistical Laboratory, Centre for Mathematical Sciences, University of Cambridge}
\author{Amanda Turner
\thanks{a.g.turner@lancaster.ac.uk}}
\affil{Department of Mathematics and Statistics, Lancaster University}
\maketitle

\begin{abstract}
We study a regularized version of Hastings-Levitov planar random growth that models clusters formed by the aggregation of diffusing particles. In this model, the growing clusters are defined in terms of iterated slit maps whose 
capacities are given by 
\[c_n=\capc|\Phi_{n-1}'(e^{\parsig+i\theta_n})|^{-\alpha}, \quad \alpha\geq0,\] 
where $\capc>0$ is the capacity of the first particle, $\{\Phi_n\}_n$ are the composed 
conformal maps defining the clusters of the evolution, $\{\theta_n\}_n$ are independent uniform angles determining the positions at which particles are attached, and $\parsig>0$ is a regularization parameter which we take to depend on $\capc$. 
We prove that under an appropriate rescaling of time, in the limit as $\capc \to 0$, the clusters converge to 
growing disks with deterministic capacities, provided that $\parsig$ does not converge to $0$ too fast. We then establish scaling limits for the harmonic measure flow over longer time periods showing that, by letting $\alpha \to 0$ at different rates, this flow converges to either the Brownian web on the circle, a stopped version of the Brownian web on the circle, or the identity map. As the harmonic measure flow is closely related to the internal branching structure within the cluster, the above three cases intuitively correspond to the number of infinite branches in the model being either 1, a random number whose 
distribution we obtain, or unbounded, in the limit as $\capc \to 0$.

We also present several findings based on simulations of the model with parameter choices not covered by our rigorous analysis.
\end{abstract}
\tableofcontents

\section{Introduction}\label{intro}

Hastings-Levitov processes, introduced in \cite{HL98}, provide a framework for modelling planar random growth which occurs through the repeated aggregation of particles. Specific examples of such processes include off-lattice versions of diffusion-limited aggregation (DLA) \cite{WS81}, dielectric breakdown \cite{NPW86}, and the Eden model \cite{E61} for biological growth. In these models, individual particles are represented as conformal mappings, and the process of aggregation corresponds to repeated composition of such maps.

For $\capc > 0$, consider  the conformal map
\[f_{\capc}\colon \Delta=\{z\in \CC\colon |z|>1\}\rightarrow D_1=\Delta\setminus(1, 1+d],\]
that satisfies $f_{\capc}(z)=e^{\capc}z+\mathcal{O}(1)$ at infinity, and sends the exterior unit disk onto the exterior unit disk minus a slit of length $d=d(\capc)$. This map corresponds to a single particle (represented by the slit) being attached to the unit disk at the point 1. An explicit expression 
for these slit maps can readily be obtained.\footnote{The Hastings-Levitov model can be set up for more general particle shapes, but having a concrete expression for the building blocks allows 
us to conveniently perform certain computations explicitly. We believe our results hold for a wide class of reasonable particles.}
The length or ``size'' of the slit and the associated 
capacity increment $\capc$ are related via
\begin{equation}
e^{\capc}=1+\frac{d^2}{4(1+d)}.
\label{capsizerel}
\end{equation}
 Typically we are interested in understanding the geometry of the clusters as the size of the particles tends to zero, whilst the number of particles attached becomes very large. From the above expression it is easy to show that $d \asymp \capc^{1/2}$ as $\capc \rightarrow 0$. Therefore, the requirements that $\capc \to 0$ and $d \to 0$ are equivalent.

Let $\{\theta_k\}_{k=1}^{\infty}$ be a sequence of angles in $[0,2\pi)$ and let $\{c_k\}_{k=1}^{\infty}$ be a sequence of positive numbers. We introduce 
rescaled and rotated conformal maps
\begin{equation}
f_k(z)=e^{i \theta_k}f_{c_k}(e^{-i\theta_k}z).
\label{detmaps}
\end{equation}
These maps are then used as building blocks for a 
growth process defined using the iterated maps
\begin{equation}
\Phi_n(z)=f_1 \circ \cdots \circ f_n(z), \quad n=1,2,\ldots \,.
\label{Phidef}
\end{equation}
Each $\Phi_n$ is a conformal map of the exterior disk onto the complement 
of a compact set $K_n$, 
\[\Phi_n \colon \Delta \rightarrow D_n=\CC\setminus K_n,\]
and $\Phi_n$ has an expansion at infinity of the form 
\[\Phi_n(z)=e^{C_n}z+\mathcal{O}(1), \quad \textrm{where}\quad 
C_n=\sum_{k=1}^nc_k.\]
The sets $\{K_n\}_{n=1}^{\infty}$ form an increasing sequence and are referred to as 
the growing clusters, and $\mathrm{cap}(K_n)=e^{C_n}$ is the logarithmic 
capacity of the $n^{th}$ cluster and is comparable to the diameter of the cluster.

By choosing different sequences of angles and capacities, a wide class of 
growth processes can be described. In the present context, we are 
specifically interested in modelling the aggregation of diffusing particles: 
the point 
of attachment of each particle should then be determined by the hitting 
probability of a Brownian 
motion started at infinity. To achieve 
this, we choose the sequence $\{\theta_k\}_{k=1}^{\infty}$ to be 
independent random variables, each uniformly distributed on $[0, 2\pi)$. 
The conformal invariance of harmonic 
measure in the plane then means that at step $n+1$, a point on the 
boundary of the cluster $K_n$ is chosen according to harmonic 
measure seen from $\infty$, and a particle, the image under 
$\Phi_n$ of $(1,1+d_{n+1}]$, is attached at this point. Here, and in what 
follows, $d_{n+1}$ is short-hand notation for $d(c_{n+1})$.

From the point of view of physical systems, 
for instance in DLA, it is natural to request that all 
particles have roughly the same size after attachment to the growing cluster.
The final arc length of the $(n+1)^{th}$ arrival is given by 
\begin{equation}
\ell(d_{n+1})=\int_{1}^{1+d_{n+1}}|\Phi'_{n}(re^{i\theta_{n+1}})|dr =d_{n+1}|\Phi'_{n}(r_0e^{i\theta_{n+1}})|
\label{arclength}
\end{equation}
for some $r_0\in [1,1+d_{n+1}]$, 
assuming sufficient regularity of $|\Phi'_{n}|$ close to the boundary. 
In the Hastings-Levitov growth model with parameter $\alpha \in[0,2]$, 
usually referred to as $\mathrm{HL}(\alpha)$,
one sets $d_1=d$ and
\begin{equation}
d_{n+1}=\frac{d}{|\Phi'_{n}(e^{i\theta_{n+1}})|^{\alpha/2}}, \quad 
n=2, 3 ,\ldots.
\label{HLa}
\end{equation} 
This means that the size of every particle is scaled by 
a power of the derivative so as to take into account, to 
an extent that varies with $\alpha$, the local distortion associated with the 
conformal map $\Phi_n$ near the point of attachment. In particular, the choice
$\alpha=2$ means that, heuristically, $\ell(d_{n})$ is close to $d$ 
for each $n$ and this model was therefore proposed 
as a candidate for off-lattice DLA. 
Setting $\alpha=0$ means that $d_{n}=d$ for all $n$, 
and so the maps $\{f_n\}_{n=1}^{\infty}$ are independent and identically distributed. Although this is the least physical of the Hastings-Levitov models in that the factor by which the particles are distorted can be shown to grow exponentially fast, it is the mathematically
most tractable, and has been studied 
in \cite{RZ05} and \cite{NT12}. 

The $\mathrm{HL}(\alpha)$ models, for $\alpha>0$, lend themselves well to 
computer simulations (see for instance \cite{HL98} and \cite{Davetal99}); however it 
seems that it is hard to establish rigorous results concerning their 
long-time behavior. Looking at the definition \eqref{sigregdef}, it 
becomes clear that the growth 
of clusters at any given stage depends on its past history in a 
complicated way. Furthermore, from a technical point of view, 
the derivatives $\Phi_n'$ become badly behaved on the boundary as $n$
becomes large, making it difficult to obtain useful estimates on 
$\{d_n\}$.
 
In this paper, we study a regularized version of the Hastings-Levitov model for $\alpha>0$, which we call $\mathrm{HL}(\alpha, \parsig)$. In this model, the capacity increments are given by
\begin{equation}
c_{n+1}=\frac{\capc}{|\Phi_{n}'(e^{\parsig+i\theta_{n+1}})|^{\alpha}}, \quad 
n=0,1,2,\ldots.
\label{sigregdef}
\end{equation}
Here $\parsig >0$ is a regularization parameter that is 
allowed to depend on $\capc$ and we take $\Phi_0$ to be the identity map. The 
geometric quantities $d_n$ can then be determined using
\eqref{capsizerel}. Similar conformal mapping models of 
growth phenomena also appear in \cite{CM02} and \cite{RZ05}: Carleson and Makarov introduced and studied the 
analogous regularization for a Loewner chain formulation of the Hele-Shaw 
flow, see Section~2.3 of \cite{CM01} and this 
provided the inspiration for studying
the particular regularization considered in this paper. We have also 
benefited from the work of Rohde and Zinsmeister (see in particular 
\cite[Section 6]{RZ05}). 

Letting $\parsig \to 0$, we recover
an equivalent formulation of $\mathrm{HL}(\alpha)$. In view of 
\eqref{arclength}, a natural range of the parameter is 
$\parsig \asymp d$. The range $\parsig\asymp d$ seems especially interesting since direct calculation shows that $|f_{\capc}'(e^{\parsig + i \theta})|$ is scale invariant for $\parsig$ and $\theta$ in this range and, furthermore, simulations reveal non-trivial geometric structures.
For all $\parsig < \infty$, 
each $c_n$ still depends on the whole sequence $\theta_1,\ldots, \theta_n$ via the scaling by the derivative, and so the model retains the long-term dependencies featuring in $\mathrm{HL}(\alpha)$. However, provided $\parsig$ does not tend to $0$ too quickly, the scaling is less singular than for $\parsig=0$ and allows us to make use of distortion estimates on conformal maps. Figure \ref{HLpics} illustrates the effect of varying $\parsig$ on the $\mathrm{HL}(\alpha, \parsig)$ cluster in the cases when $\alpha=0.5$ and $\alpha=2$.

There are a number of well known open problems relating to $\mathrm{HL}(\alpha)$ for $\alpha>0$. Whilst we are not able to solve these problems directly, we are able to give partial answers to related questions for the regularized model. 

Hastings and Levitov predicted that, for small particles, the $\mathrm{HL}(\alpha)$ process undergoes a phase transition as $\alpha$
increases through the point $\alpha_{\textrm{crit}}=1$: for $\alpha \in [0,1)$, the clusters look like disks, whereas for $\alpha \in (1,2]$ the clusters seem to be random anisotropic shapes. Simulations of $\mathrm{HL}(\alpha, \parsig)$ suggest that this phase transition at $\alpha_{\textrm{crit}}=1$ is also present in the regularized model when $\parsig=\mathcal{O}(d)$ (see Figure \ref{HLpics}). We cannot at present prove any statements regarding
the existence or non-existence of limit clusters in this regime, 
or the possible presence of a sharp phase transition phenomenon.
However, we have been able to show that the macroscopic shape of the regularized $\mathrm{HL}(\alpha, \parsig)$ clusters is a disk for all values of $\alpha$, even if $\parsig \to 0$ as $\capc \to 0$, provided that this convergence is not too fast relative to that of $\capc$. 

As each particle in a cluster is attached to a single ``parent'' particle, the cluster can be viewed as a random tree, rooted at the vertex corresponding to the initial unit disk. The branches of the cluster are the connected components of the graph obtained by deleting the root vertex so each branch is a maximal set of particles sharing a common ancestor particle. It has been shown that the $\mathrm{HL}(0)$ cluster has a single infinite branch as the number of particles goes to infinity (see \cite{NT12} for a version of this result in the small particle limit, although it is known to be true for any particle size). A natural question is to ask what the smallest value of $\alpha$ is for which the $\mathrm{HL}(\alpha)$ cluster has more than one infinite branch, and whether the number of infinite branches increases as a function of $\alpha$. For the regularized $\mathrm{HL}(\alpha, \parsig)$ cluster, again provided that $\parsig$ does not converge to zero too fast, we show that if $\alpha$ is allowed to tend 
to zero appropriately, then the limiting harmonic measure flow is that of a cluster with a random number of infinite branches and the distribution of the number of infinite branches is stochastically increasing in $\alpha$.

Interpreting branching in terms of harmonic measure flow, our main results in this direction can be phrased as follows. Suppose that $\parsig \gg (\log \capc^{-1})^{-1/2}$. Then in the limit as $\capc \to 0$, the $\mathrm{HL}(\alpha, \parsig)$ cluster is a disk with internal structure consisting of
\begin{itemize}
\item one infinite branch if $\alpha \ll \capc^{1/2}$; 
\item a random number of infinite branches, whose distribution is stochastically increasing in $a$, if $\alpha \capc^{-1/2} \to a \in (0,\infty)$; 
\item deterministic radial growth if $\alpha \gg \capc^{1/2}$.
\end{itemize}

In the next section we give an overview of our main results as well as an outline of the proof strategies that we have used. In Section \ref{simul} we discuss simulations that we have performed, notation and preliminary estimates are in Section \ref{prelim}, and the proofs take up the remainder of the paper. Results for deterministic capacity sequences are in Section \ref{norristurner}, capacity limits are in Section \ref{capacityconvergence}, macroscopic scaling limits are in Section \ref{mapconvergence} and scaling limits of the harmonic measure flow are in Section \ref{flowcon}.
\subsection*{Acknowledgements}
The authors thank James Norris for many useful discussions and suggestions. We would also like to thank Sunil Chhita, Daniel Elton and Noam Berger for helpful conversations and advice, and Bati Sengul for 
calling the paper \cite{BLG} to our attention.

This work was initiated while the authors were attending the special semester 
on Complex Analysis and Integrable Systems at Institut Mittag-Leffler in 2011. 
The authors are grateful to the Institute and its staff for providing an 
ideal environment for mathematical research.

FJV was supported by the Simons Foundation, NSF grant DMS-1308476, Institut Mittag-Leffler, and the AXA Research fund. AS acknowledges support from the EPSRC under grant EP/103372X/1, Institut Mittag-Leffler, and the AXA Research fund. We thank the Statistical Laboratory at the University of Cambridge and the Department of Mathematics and Statistics at Lancaster University for their hospitality and financial support. 

\section{Overview of results}\label{overview}

Our first aim in this paper is to show that as 
$\capc\rightarrow 0$, in the scaling limit $N=\lfloor T/\capc\rfloor$, the $\mathrm{HL}(\alpha, \parsig)$ map $\Phi_{N}$ converges to a 
deterministic limit, namely, the map 
\[z\mapsto \Psi_T(z)=(1+\alpha T)^{1/\alpha}z, \quad z\in \Delta.\] 
Note that as $\alpha \to 0$, $\Psi_T(z) \to e^Tz$. This recovers the result in \cite{NT12}, where it was shown that
the corresponding scaling limit for $\mathrm{HL}(0)$ is the deterministic 
map 
\[z\mapsto e^Tz, \quad z \in \Delta.\]

Since our growth process exhibits a 
non-trivial dependence on its past, the time-reversal arguments 
in \cite{RZ05} and the martingale techniques in \cite{NT12} are not 
immediately applicable. However, we are able to show that, provided $\parsig$ does not tend to 0 too quickly, the capacities $\{c_k\}_{k=1}^{\infty}$ are close, with high probability, to the deterministic sequence $\{c^*_k\}_{k=1}^{\infty}$ defined by
\begin{equation}
c^*_k=\frac{\capc}{1+\alpha \capc(k-1)}, \ \ k=1,2,\ldots 
\label{detcaps}
\end{equation}
Some intuition behind the form of this sequence is provided at the end of this section. In Section \ref{capacityconvergence} we prove the following theorem, which is stated precisely with detailed hypotheses in Theorem 
\ref{capincrconv}. 

\begin{Mth}[Convergence of capacities]
Let $\parsig \gg (\log \capc^{-1})^{-1/2}$. Then there exists some absolute constant $\beta>0$ such that
\[
\mathbb{P} \left ( \sup_{n \leq N} \left | \log \frac{c_n}{c^*_n} \right | > \alpha \capc^{\beta} \right ) \to 0
\] 
as $\capc \to 0$.
\end{Mth}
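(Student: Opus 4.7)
The plan is to control the logarithmic errors $X_n := \log(c_n/c_n^*)$ via a stopping-time bootstrap combined with martingale concentration. From the definition $c_n=\capc|\Phi_{n-1}'(e^{\parsig+i\theta_n})|^{-\alpha}$ and a Riemann-sum comparison giving $\alpha C_{n-1}^* = \log(1+\alpha\capc(n-1)) + O(\alpha\capc)$, one obtains
\[
X_n = \alpha\bigl(C_{n-1}^* - \log|\Phi_{n-1}'(e^{\parsig+i\theta_n})|\bigr) + O(\alpha\capc).
\]
Crucially, $\log|\Phi_{n-1}'|$ is harmonic on $\Delta$ with value $C_{n-1}$ at infinity, so by the mean value property (applied via $w=1/z$),
\[
\mathbb{E}\bigl[\log|\Phi_{n-1}'(e^{\parsig+i\theta_n})| \,\big|\, \mathcal{F}_{n-1}\bigr] = C_{n-1}.
\]
Writing $\xi_n := C_{n-1} - \log|\Phi_{n-1}'(e^{\parsig+i\theta_n})|$, the recursion decomposes into a martingale-difference fluctuation $\alpha\xi_n$ plus a drift $\alpha(C_{n-1}^* - C_{n-1}) = -\alpha\sum_{k<n}c_k^*(e^{X_k}-1)$, plus the negligible $O(\alpha\capc)$ per step.

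Next I would introduce the stopping time $\tau := \inf\{n \geq 1 : |X_n| > \alpha\capc^\beta\}$ and work on $\{n\leq\tau\}$, where each $c_k$ for $k<n$ differs from $c_k^*$ by a factor $1+O(\alpha\capc^\beta)$. Thus $\Phi_{n-1}$ is a small perturbation of the deterministic-capacity map $\Phi_{n-1}^*$ built from the same realized angles $\theta_k$ together with capacities $c_k^*$. The deterministic-capacity analysis in Section \ref{norristurner} should then deliver a uniform estimate
\[
\sup_{\theta}\bigl|\log|\Phi_{n-1}'(e^{\parsig+i\theta})| - C_{n-1}^*\bigr| \le \eps(\parsig,\capc)
\]
valid on $\{n\leq\tau\}$, for some deterministic $\eps(\parsig,\capc)$ that is small under the hypothesis $\parsig \gg (\log \capc^{-1})^{-1/2}$. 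In particular each $|\xi_n|\leq \eps$ on this event.

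With these inputs I would bound drift and fluctuation separately. The drift is Gronwall-controlled: on $\{n\leq\tau\}$, $|\alpha\sum_{k<n}c_k^*(e^{X_k}-1)| \lesssim \alpha\cdot C_N^*\cdot \alpha\capc^\beta \lesssim T\alpha^2\capc^\beta$, smaller than $\alpha\capc^\beta/2$ for $\alpha, T$ bounded. For the fluctuation, $M_n := \alpha\sum_{k \leq n\wedge\tau}\xi_k$ is a martingale with differences bounded by $\alpha\eps$, so Azuma--Hoeffding yields
\[
\mathbb{P}\Bigl(\sup_{n\leq N}|M_n| > \alpha\capc^\beta/2\Bigr) \le 2\exp\bigl(-c\,\capc^{2\beta}/(N\eps^2)\bigr),
\]
which, with $N\asymp\capc^{-1}$, tends to $0$ provided $\eps\ll\capc^{\beta+1/2}$. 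A sufficiently small choice of $\beta>0$ makes this compatible with the estimate $\eps(\parsig,\capc)$ produced in Section \ref{norristurner}. These two bounds combined force $|X_{n\wedge\tau}|<\alpha\capc^\beta$ with high probability, so $\tau>N$ with high probability, which is the claim.

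The main technical obstacle will be the uniform derivative estimate in the second step: controlling $\sup_\theta|\log|\Phi_n^{*\prime}(e^{\parsig+i\theta})|-C_n^*|$ with a sufficiently small $\eps$. Classical Koebe distortion only gives $O(1)$ multiplicative control, so this requires a sharper analysis that tracks the cumulative effect of $N\asymp\capc^{-1}$ small slit perturbations (each of diameter $d\asymp\capc^{1/2}$) on the derivative evaluated at distance $\parsig$ from the unit circle. The precise quantitative form of $\eps(\parsig,\capc)$ produced by this analysis is exactly what the hypothesis $\parsig \gg (\log \capc^{-1})^{-1/2}$ is tuned to make small enough that the martingale concentration above closes for some $\beta>0$.
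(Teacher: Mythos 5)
Your decomposition $X_n = \alpha\xi_n + \alpha(C_{n-1}^*-C_{n-1}) + O(\alpha^2\capc)$ with the observation that $\mathbb{E}[\log|\Phi_{n-1}'(e^{\parsig+i\theta_n})|\mid\mathcal{F}_{n-1}]=C_{n-1}$ (via harmonicity of $\log|\Phi_{n-1}'|$ on $\Delta\cup\{\infty\}$) is correct and is a nice identity the paper does not use. However, the argument does not close as written, for two reasons. First, the Azuma step is quantitatively incompatible with what the deterministic-capacity analysis of Section \ref{norristurner} can deliver: Theorem \ref{NTmain} yields its conclusion with probability $1-O(\exp(-A\eps_0^2/\capc))$, so one must take $\eps_0 \gg \capc^{1/2}$, and the resulting uniform bound on $|\xi_n|$ (via Corollary \ref{NTstandcoord}) is $\eps \asymp \eps_0/\parsig \gg \capc^{1/2}$. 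Your Azuma bound requires $\eps\ll\capc^{\beta+1/2}\leq\capc^{1/2}$, a contradiction. In fact the martingale $M_n$ is not the quantity you need: what enters $X_n$ is the single term $\alpha\xi_n$, not a partial sum, so a per-step deterministic bound $|\xi_n|\leq\eps$ is the correct tool, and the Azuma step is a detour that does not help. Second, and more importantly, the step where you assert that on $\{n\leq\tau\}$ the map $\Phi_{n-1}$ is a ``small perturbation'' of $\Phi_{n-1}^*$, and that Section \ref{norristurner} then controls $\log|\Phi_{n-1}'|$ (not just $\log|(\Phi_{n-1}^*)'|$), is precisely the hard content of the paper's Section \ref{capacityconvergence} that you have skipped. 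Section \ref{norristurner} applies only to capacity sequences independent of the angles, hence only to $\Phi_n^*$; transferring to $\Phi_n$ requires the coupling via $S_\alpha(w,n)=\log|\Phi_n'(w)/(\Phi_n^*)'(w)|$, the distortion estimates of Lemma \ref{capreplacement}, and the recursion $|S_\alpha(z,n)|\leq A_3\sum_{k<n}|c_k-c_k^*|/(|\Phi^*_{n-1,k}(z)|-1)^2$.

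Once that coupling is in hand, your crude drift bound $|\alpha(C_{n-1}^*-C_{n-1})|\lesssim T\alpha^2\capc^\beta$ is not enough either: the accumulated coupling error contributes $\asymp \alpha T\capc^\beta/\parsig^2$, which for $\parsig\to 0$ (as allowed by the hypothesis $\parsig\gg(\log\capc^{-1})^{-1/2}$) blows past $\capc^\beta$. This is why the paper runs a Gr\"onwall argument on the recursion for $|\log(c_n/c_n^*)|$, which converts the linear accumulation into a controllable exponential factor $\exp(8A_3\log(1+\alpha T)/\parsig^2)$; the assumption $\parsig\gg(\log\capc^{-1})^{-1/2}$ is tuned exactly so that this factor is only polynomially large in $\capc^{-1}$. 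Your appeal to ``$\alpha, T$ bounded'' therefore conceals a real restriction that Gr\"onwall removes. So your proposal correctly identifies the role of the starred process and the need for a stopping-time bootstrap, but it omits the coupling argument (the paper's genuinely new step), misapplies martingale concentration where a deterministic per-step bound is needed, and does not supply the Gr\"onwall mechanism required when $\parsig\to 0$.
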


Using the fact that $|1-x| \leq |\log x|(1+|\log x|/2)$ for small $x$, this is equivalent to showing that
\[|c_n-c^*_n|<2\alpha\capc^{1+\beta}\]
for the above range of $n$. 

We prove the above result in two stages. Firstly, we define a ``starred'' growth process by using the deterministic capacities \eqref{detcaps} in place of \eqref{sigregdef} in the constructions of the building blocks $\{f^*_k\}$ and in the corresponding compositions $\Phi^*_{n}$. We couple the starred and un-starred maps by using the same angles of rotation; 
however we suppress this dependence for notational simplicity. As the $\Phi^*_{n}$ maps do not exhibit the
complicated dependence of the $\Phi_{n}$ maps, in Section \ref{norristurner} we adapt the martingale techniques in 
\cite{NT12} to prove that these maps converge in probability as $\capc \to 0$ to a deterministic map that takes the exterior unit disk to the complement of an inflated disk. In fact, we show more generally that this result holds if $\{c^*_n\}$ is replaced by any deterministic sequence of capacities that are uniformly bounded by $\capc$.
Secondly, in Section \ref{capacityconvergence} we use the coupling of $\Phi'_n$ with $(\Phi^*_n)'$ to bound their difference in terms of the parameter $\parsig$, using a recursive 
argument. It is in this part of the proof that the regularization parameter $\parsig$ plays a crucial role, 
as the fact that we are evaluating derivatives away from the boundary of $\Delta$ allows us to make use of uniform distortion bounds on quantities like
$|\Phi'_n(e^{\parsig+i\Theta})|$. A Gr\"onwall-type argument completes the proof 
that $\{c_k\}$ and $\{c^*_k\}$ are uniformly close.


By adapting the techniques in \cite{JST12}, we next show that the conformal 
maps $\Phi_N$ converge in probability to the deterministic limit map 
$\Psi_T(z)=(1+\alpha T)^{1/\alpha}z$, provided we scale the number of 
arriving particles as $N = \lfloor T/ \capc \rfloor$.
Our arguments here rely on the facts that the 
convergence $\sup_k|c_k-c^*_k|/\capc \rightarrow 0$ implies weak 
convergence of driving 
measures for the Loewner representation of the growth process, and that weakly 
convergent driving measures lead to sequences 
of conformal maps that converge in the sense of Carath\'eodory. 
In Section \ref{mapconvergence} 
this then leads to the first of our two main theorems, which is stated precisely with detailed hypotheses in Theorem 
\ref{MainThm1}.

\begin{Mth}[Convergence of clusters to disks]
Let $T>0$ and $\alpha > 0$ be fixed. 
Set $N=\lfloor T/\capc\rfloor$ and suppose
$\parsig \gg (\log \capc^{-1})^{-1/2}$.
Then, as $\capc\to 0$, the laws of the maps $\Phi_{N}$ converge weakly with respect to uniform convergence on compact subsets to a point mass at $\Psi_T(z)=(1+\alpha T)^{1/\alpha}z$.
\end{Mth}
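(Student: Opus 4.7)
The plan is to exploit the capacity convergence theorem to reduce the random growth map to a problem about Loewner chains with nearly deterministic driving data, and then apply a continuity result for the Loewner correspondence. Every composition $\Phi_n$ is the time-$C_n$ map of the discrete radial Loewner chain with driving measure $\mu_t^{(\capc)} = \delta_{\theta_k}$ for $t \in [C_{k-1}, C_k)$, where $C_n = \sum_{j=1}^n c_j$. Thus, to prove convergence of $\Phi_N$ to $\Psi_T$ it suffices to show that (i) the total capacity $C_N$ converges in probability to a deterministic value $T^\ast$, and (ii) the driving measures $\mu_t^{(\capc)}$ converge in a suitable weak sense, over $t \in [0, T^\ast]$, to the uniform probability measure on $[0, 2\pi)$.

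For (i), the capacity convergence theorem gives $\sup_{n \le N}|c_n - c_n^\ast| = O(\alpha \capc^{1 + \beta})$ on a high-probability event, hence $|C_N - C_N^\ast| \to 0$ in probability, and a direct Riemann sum computation identifies the deterministic limit as
\[
T^\ast := \lim_{\capc \to 0} C_N^\ast = \lim_{\capc \to 0}\sum_{n=1}^N \frac{\capc}{1 + \alpha \capc (n-1)} = \int_0^T \frac{ds}{1 + \alpha s} = \frac{1}{\alpha}\log(1 + \alpha T).
\]
For (ii), fix a continuous test function $\psi(t, \theta)$ on $[0, T^\ast] \times [0, 2\pi)$. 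The integrated driving measure equals $\sum_{n=1}^N \int_{C_{n-1}}^{C_n} \psi(t, \theta_n)\,dt$, which one first approximates by $\sum_n c_n \psi(C_n, \theta_n)$ using smoothness of $\psi$ and the bound $c_n = O(\capc)$, and then by $\sum_n c_n^\ast \psi(C_n^\ast, \theta_n)$ using the capacity theorem. The resulting deterministic coefficients combined with the i.i.d.\ uniform angles $\theta_n$ allow a weak law of large numbers (variance of order $\capc$) to pass the sum to $\int_0^{T^\ast} \int_0^{2\pi} \psi(t, \theta)\,\frac{d\theta}{2\pi}\, dt$.

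Given these two convergences, I invoke the Carath\'eodory continuity result from \cite{JST12}: weak convergence of the time-integrated driving measures implies uniform convergence on compact subsets of $\Delta$ of the associated conformal maps. Since the limit driving measure is the product of Lebesgue measure on $[0, T^\ast]$ with the uniform probability measure on $[0, 2\pi)$, rotational symmetry forces the limit Loewner flow to be the radial growth $z \mapsto e^t z$, and evaluation at $t = T^\ast$ gives $e^{T^\ast} z = (1 + \alpha T)^{1/\alpha} z = \Psi_T(z)$. Convergence in probability to a deterministic limit is precisely weak convergence of laws to the corresponding point mass, completing the proof.

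The main technical obstacle is that the time parametrization (cumulative capacity) and the driving measures are both random, so (i) and (ii) must be combined in a way that is uniform over the terminal time interval. In particular, one must ensure that the random endpoint $C_N$ does not perturb the Loewner flow past its deterministic limit in an uncontrolled way; this is why the previous theorem's uniform-in-$n$ bound on $|c_n - c_n^\ast|$, rather than mere convergence of the scalar sum $C_N$, is essential for the argument to close.
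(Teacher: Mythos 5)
Your proposal follows essentially the same route as the paper: reduce to weak convergence of the Loewner driving measures together with convergence of the terminal capacity $C_N$, use the capacity-convergence theorem to pass from $c_n$ to $c_n^*$, apply a law of large numbers for the deterministic coefficients, and invoke the Carath\'eodory continuity result of \cite{JST12}. The only cosmetic differences are that the paper uses the strong law of large numbers (citing \cite{JST12}) where you use a Chebyshev/weak-law estimate, and the paper makes your closing caveat precise by introducing a stopping time $N'=\inf\{n:|c_{n+1}-c^*_{n+1}|>2\alpha\capc^{1+\beta}\}\wedge N$ so that all bounds hold deterministically up to $N'$; both are minor and your outline is correct.
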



Observe that when $\alpha=2$ this result implies that the cluster after $N$ arrivals is approximately a disk with area $\pi (1 + 2 \capc N)$, so the area increases at a constant rate in the number of particles. This is consistent with the idea that $\alpha=2$ corresponds to a model in which all particles are the same size. Even though our choice of $\parsig$ is not in the range needed to ensure that the sizes of the particles are not distorted by the composition process, namely $\parsig \leq d$, this result suggests, nevertheless, that a deterministic correction to the capacities is enough to ensure that {\em on average} all particles are the same size. 
Similarly, when $\alpha=1$, the total 
boundary length of a cluster with $N$ particles is close to $2\pi (1+\capc N)$, and thus grows at a constant 
rate. This is the instance of the Hastings-Levitov model thought to be an analog of the Eden 
model and so the above result is consistent with a model which exhibits growth proportional to local arclength.  

We next prove 
that the rescaled harmonic measure flow converges in a certain space of weak 
flows introduced in \cite{NT12}, and we identify the weak limit. The harmonic measure flow exhibits a phase transition at $\alpha=0$ and we therefore consider different ``off-critical'' limits by letting $\alpha \to 0$ at different rates and obtain the following result. 

\begin{Mth}[Convergence of harmonic measure flow]
Suppose that $\parsig \gg (\log \capc^{-1})^{-1/2}$. Then as $\capc\to 0$, on timescales of order $\capc^{-3/2}$, one of the following three situations arises.
\begin{itemize}
\item If $\alpha \capc^{-1/2} \to 0$, the harmonic measure flow converges to the Brownian web.
\item If $\alpha \capc^{-1/2} \to \infty$ (sufficiently slowly), the harmonic measure flow converges to the identity flow.
\item If $\alpha \capc^{-1/2} \to a \in (0,\infty)$, the harmonic measure flow converges to a time-change of the Brownian web, stopped at a finite time that is decreasing in $a$. 
\end{itemize}
\end{Mth}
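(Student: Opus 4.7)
The plan is to reduce the problem to the deterministic-capacity starred process using an extension of the capacity-convergence theorem, perform the one-step variance computation that drives the harmonic measure flow, and then match each of the three regimes to a weak-flow limit in the framework of \cite{NT12}. First I push the capacity-convergence estimate from horizon $\lfloor T/\capc\rfloor$ to the longer horizon $N = \lfloor T\capc^{-3/2}\rfloor$; the Gr\"onwall-type recursion of Section~\ref{capacityconvergence} still closes over this range because the starred capacities $c_n^* = \capc/(1+\alpha\capc(n-1))$ decay like $(\alpha n)^{-1}$ once $\alpha\capc n \gg 1$, which tempers the accumulated amplification provided $\parsig \gg (\log\capc^{-1})^{-1/2}$. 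After this step I work directly with the deterministic capacities $\{c_n^*\}$ and iid uniform angles $\{\theta_n\}$.

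With the reduction in hand, I compute the one-step angular displacement $\delta_n(\psi)$ of a boundary point at angular distance $\psi = \phi - \theta_n$ from the current attachment point, using the explicit slit map $f_{c_n^*}$. This is an odd function of $\psi$ with $|\delta_n(\psi)|\asymp\sqrt{c_n^*}$ for $|\psi|\lesssim\sqrt{c_n^*}$ and $|\delta_n(\psi)|\asymp c_n^*/|\psi|$ for larger $|\psi|$, so averaging over the uniform $\theta_n$ gives $\mathbb{E}[\delta_n] = 0$ and $\mathrm{Var}(\delta_n) = \kappa (c_n^*)^{3/2}(1 + o(1))$ for an explicit constant $\kappa>0$. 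Summing the variances to time $N$ yields
\[
V_\capc(T) \;:=\; \sum_{n=1}^N \mathrm{Var}(\delta_n) \;\sim\; \frac{2\kappa\capc^{1/2}}{\alpha}\Bigl(1 - (1+\alpha\capc^{-1/2}T)^{-1/2}\Bigr),
\]
and the three regimes fall out of its asymptotics: if $\alpha\capc^{-1/2}\to 0$ then $V_\capc(T)\to\kappa T$; if $\alpha\capc^{-1/2}\to a\in(0,\infty)$ then $V_\capc(T)\to (2\kappa/a)(1-(1+aT)^{-1/2})$, which is finite, increasing in $T$, and bounded by $2\kappa/a$; and if $\alpha\capc^{-1/2}\to\infty$ (slowly enough for step one to survive) then $V_\capc(T)\to 0$.

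Finally, I identify the limiting flows via the weak-flow machinery of \cite{NT12}: verifying that the predictable quadratic variation of each trajectory converges to $V_\capc$, that Lindeberg-type bounds rule out macroscopic jumps, and that coalescence of two paths occurs as soon as their angular separation is of order $\sqrt{c_n^*}$. In the regime $\alpha\capc^{-1/2}\to 0$ this yields the standard Brownian web with diffusivity $\kappa$; in the regime $\alpha\capc^{-1/2}\to a$ the limit is the Brownian web run on the finite intrinsic clock $V_\infty(T) := (2\kappa/a)(1-(1+aT)^{-1/2})$ and then frozen, i.e.\ the stopped Brownian web whose terminal time $2\kappa/a$ is decreasing in $a$; and in the regime $\alpha\capc^{-1/2}\to\infty$ a Chebyshev/second-moment argument forces every trajectory to be eventually constant, yielding the identity flow.

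The main obstacle I expect is step one: extending the capacity-convergence estimate from the short horizon $\capc^{-1}$ used in Theorem~\ref{capincrconv} to the much longer horizon $\capc^{-3/2}$ demanded here. The recursion must be iterated $\capc^{-3/2}$ times, and only the deterministic decay $c_n^* = O((\alpha n)^{-1})$ keeps the Gr\"onwall amplification subpolynomial; striking the right balance between $\alpha$, $\capc$, and $\parsig$ to make this go through (and thereby pinning down how slowly $\alpha\capc^{-1/2}$ is allowed to tend to infinity in Case~2) is where most of the technical effort will go. A secondary subtlety is identifying the Case~3 limit as a \emph{stopped} Brownian web rather than a shrinking-diffusivity Brownian web, which is settled by observing that the intrinsic clock $V_\infty(T)$ saturates at the finite value $2\kappa/a$ as $T\to\infty$, so after the intrinsic time change the limit is genuinely a Brownian web run for a finite deterministic time and then frozen.
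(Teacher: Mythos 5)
Your proposal matches the paper's strategy essentially point for point: extend Theorem~\ref{capincrconv} to horizon $\capc^{-3/2}$ (the Gr\"onwall factor $\exp(8A_3\log(1+\alpha\capc N)/\parsig^2)$ stays controlled precisely because of the $(1+\alpha\capc k)^{-1}$ decay of $c_k^*$, giving the condition~\eqref{parsigcond}), decompose $\bar\Gamma_{n,m}(x)-x$ into a martingale $\sum_k\tilde g_{c_k^*}(\bar\Gamma_{k-1,m}(x)-\theta_k)$ plus an $L^1$-small remainder controlled by Lemma~\ref{gmapestimate}, compute $\mathbb{E}[(Y_t-Y_r)^2]=\sum\rho(c_k^*)^{-1}\sim\frac{16}{3\pi}\sum(c_k^*)^{3/2}$ (your $\kappa=16/(3\pi)$), identify the three time changes from the integral $\int\capc^{3/2}(1+\alpha\capc x)^{-3/2}dx$, and conclude via Aldous tightness, L\'evy's characterization, and the flow-convergence machinery of \cite{NTprep}. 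The one caveat is that ``working directly with the deterministic capacities'' must be read as replacing $\tilde g_{c_k}$ by $\tilde g_{c_k^*}$ inside the one-step maps applied along the \emph{actual} flow trajectory $\bar\Gamma_{k-1,m}$---not analyzing the starred flow $\bar\Gamma^*_{n,m}$ in its place---and it is precisely the $\mathcal F_{k-1}$-measurability (indeed determinism) of $c_k^*$ that makes this sum a martingale and kills the cross terms in the variance; since your quadratic-variation formula matches the paper's exactly, this appears to be what you intended.
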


The proof relies on the fact that 
$\sup_k|c_k-c^*_k|/\capc\to 0$ and 
that the sequence $\{c^*_k\}$ is independent of the history of the cluster. This enables us to show that the harmonic measure flow is close to a martingale and identify the Brownian limits over time periods of order $\capc^{-3/2}$. Full statements and proofs are given in Theorems \ref{ptwiseflowconvergence} and 
\ref{flowconvergence}. We note that the result is not particular to the sequence $\{c^*_k\}$, and 
a similar classification holds for general particle sequences that do 
not exhibit a dependence on the past history of the growth. This phenomenon is illustrated in Figure \ref{BWflowpics}.

\begin{figure}[h!]
    \subfigure[$\alpha=10^{-4}$]
      {\includegraphics[width=0.4 \textwidth]{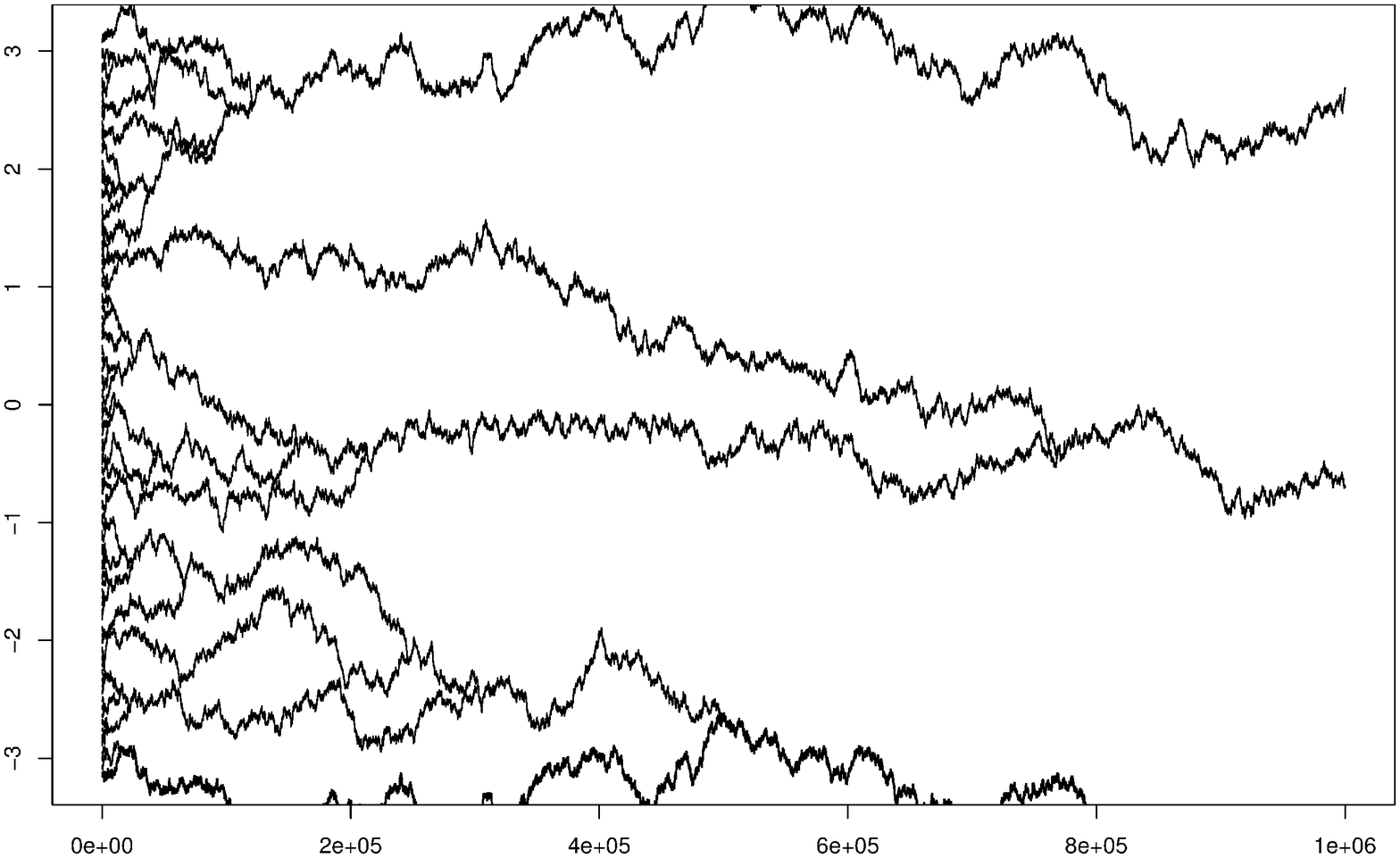}}
    \hfill
    \subfigure[$\alpha=10^{-2}$]
      {\includegraphics[width=0.4 \textwidth]{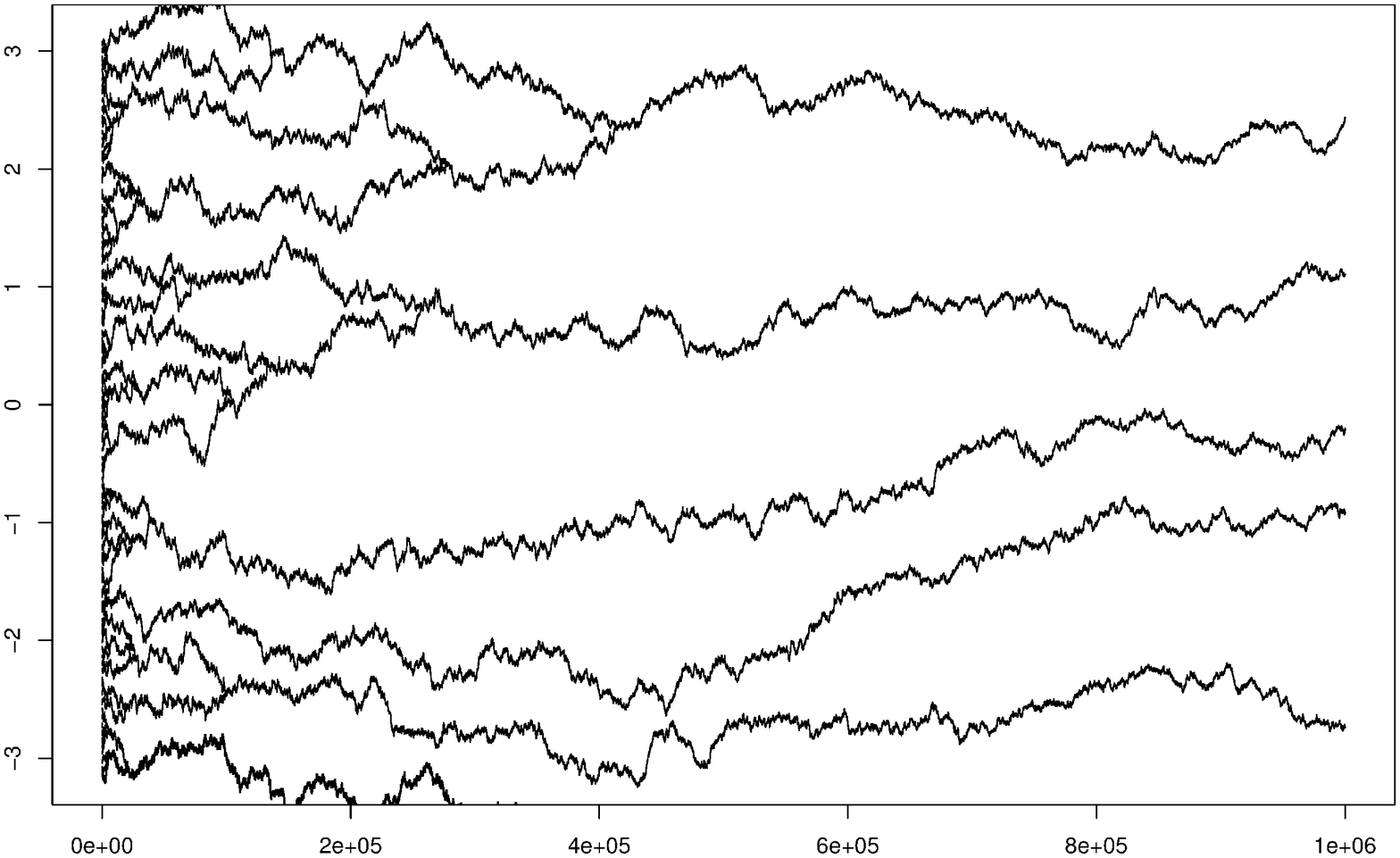}}
  \caption{\textsl{Harmonic measure flow corresponding to $\Phi^*_n$ with $1,000,000$ particles when $\capc=10^{-4}$ for $\alpha=10^{-4} = \capc$ (left) and $\alpha=10^{-2}=\capc^{1/2}$ (right). }}
  \label{BWflowpics}
\end{figure}

It is shown in \cite{NT12} that the evolution of harmonic measure on the cluster boundary is closely related to the random tree structure that is contained within the cluster.
Intuitively there is a correspondence between the number of infinite branches in a cluster and the number of distinct paths arising from the harmonic measure flow started at time zero  that survive infinitely long. As all Brownian motions on the circle starting at a fixed time eventually coalesce into a single Brownian motion, we interpret this result as the $\mathrm{HL}(\alpha, \parsig)$ cluster when $\alpha \ll \capc^{1/2}$ having a single infinite branch, or equivalently all particles arriving beyond a certain time sharing a common ancestor. If $\alpha \gg \capc^{1/2}$, the harmonic measure flow converges to the identity flow which intuitively corresponds to the number of infinite branches becoming unbounded in the limit as $\capc \to 0$. If 
$\alpha\capc^{-1/2}\to a \in (0,\infty)$, the Brownian web stopped at a finite time intuitively corresponds to a random number 
of infinite branches in the
$\mathrm{HL}(\alpha, \parsig)$ cluster. By using a result of Bertoin and Le Gall \cite{BLG} that relates the Brownian web to Kingman's coalescent, we are able to give the distribution of this random number and show that it is stochastically increasing in $a$.


\subsection*{Regularization at infinity}
We give an example of a simple regularization for which it is possible to calculate the capacity sequence $\{c_k\}$ explicitly. This provides some intuition behind the definition of the deterministic sequence $\{c^*_k\}$ in \eqref{detcaps} which features in our coupling arguments.

In the definition of $\textrm{HL}(\alpha,\parsig)$ let $\parsig=\infty$, that is, let $\parsig \to \infty$. Then at the $k^{th}$ step of the process, we are scaling 
successive capacity increments by the derivatives of the conformal maps at infinity, that is, by the total capacity of the growing cluster after $k-1$ steps. 
For a single building 
block then, $|f'_{\capc}(\infty)|^{-\alpha}=\exp(-\alpha \capc)$.
Let $q(k)=c_k/\capc$. Since the point at infinity is fixed by the individual maps $f_{c_k}$, the chain rule yields
\[q(k) = |\Phi_{k-1}'(e^{\parsig+i\theta_k})|^{-\alpha}=\prod_{j=1}^{k-1}|f'_{c_j}(\infty)|^{-\alpha}
=e^{-\alpha \capc \sum_{j=1}^{k-1}q(j)}.\]
We know \emph{a priori} that $q(k)\leq 1$ for all $k$ since the total capacity of the cluster is non-decreasing.
From the relation
\[q(k)-q(k-1)=e^{-\alpha \capc \sum_{j=1}^{k-1}q(j)}
\left(e^{-\alpha \capc q(k-1)}-1\right),\]
we obtain
\[\frac{q(k)-q(k-1)}{q(k-1)}=e^{-\alpha \capc q(k-1)}-1=-\alpha\capc q(k-1)+\mathcal{O}(\capc^2),\]
and upon dividing by $q(k-1)$ and integrating over $[1,n]$, we recover
\[\left[-\frac{1}{q(x)}\right]_{1}^n=-\alpha \capc(n-1)+n\mathcal{O}(\capc^2).\]
By assumption, $q(1)=1$, and after rearranging, we find
\[c_n = \capc q(n)=\capc (1+\alpha \capc (n-1)+n\mathcal{O}(\capc^2))^{-1}.\] 
When we pass to the limit with the natural scaling 
$N = \lfloor T/\capc \rfloor$, the last equation produces 
the expressions in \eqref{detcaps}
 as leading terms. (Cf. \cite[p.251]{HL98}.)



\section{Simulations}\label{simul}

In this section we show simulations that illustrate the results above. We also present some experimental findings for the model with the parameter choice $\parsig = d$ that is not covered by our rigorous analysis. Simulations of clusters were generated using computer code that is based on code available on C. McMullen's webpage \cite{McMuProg}.
A repository of simulations that includes values of $\alpha$ and $\parsig$ not shown below can be found in \cite{TurnerWeb}.

\begin{figure}[ht!]
    \subfigure[$\mathrm{HL}(0.5, 1)$]
      {\includegraphics[width=0.4 \textwidth]{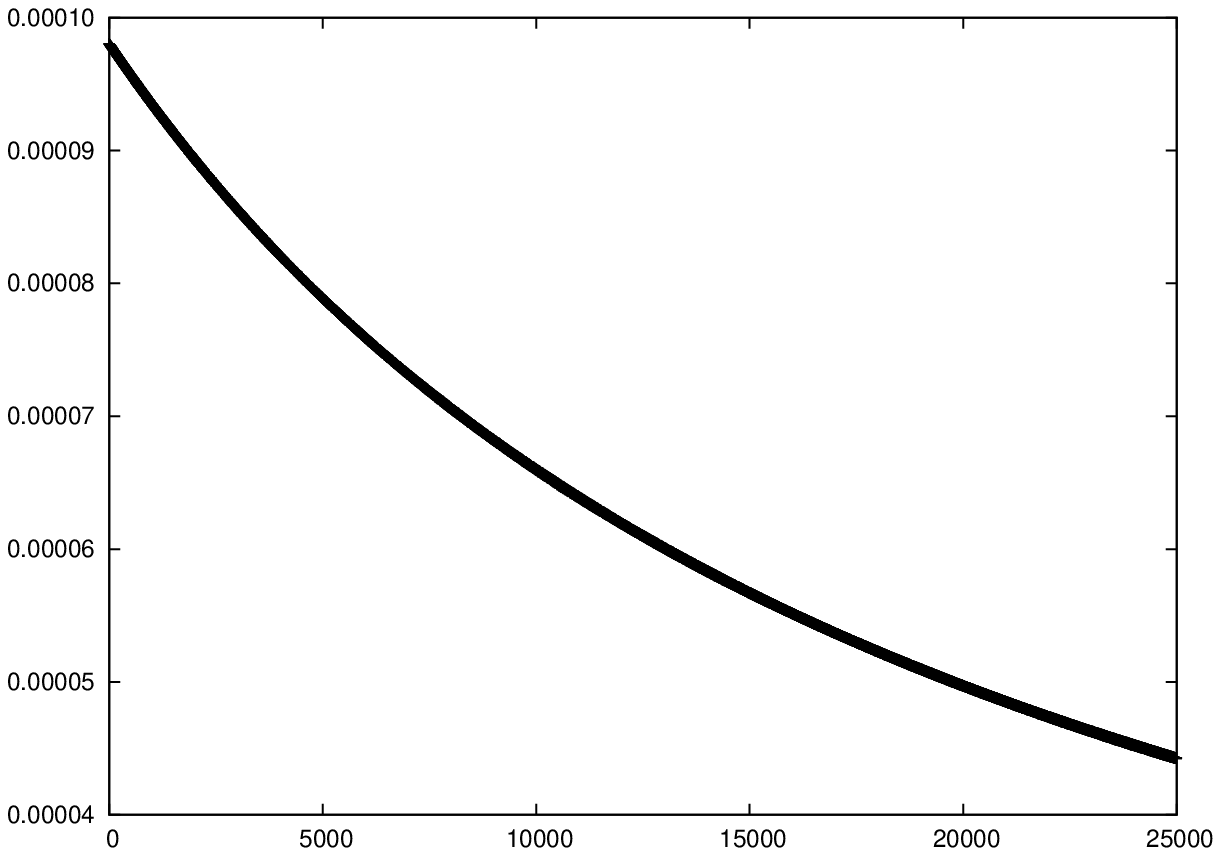}}
    \hfill
    \subfigure[$\mathrm{HL}(2, 1)$]
      {\includegraphics[width=0.4 \textwidth]{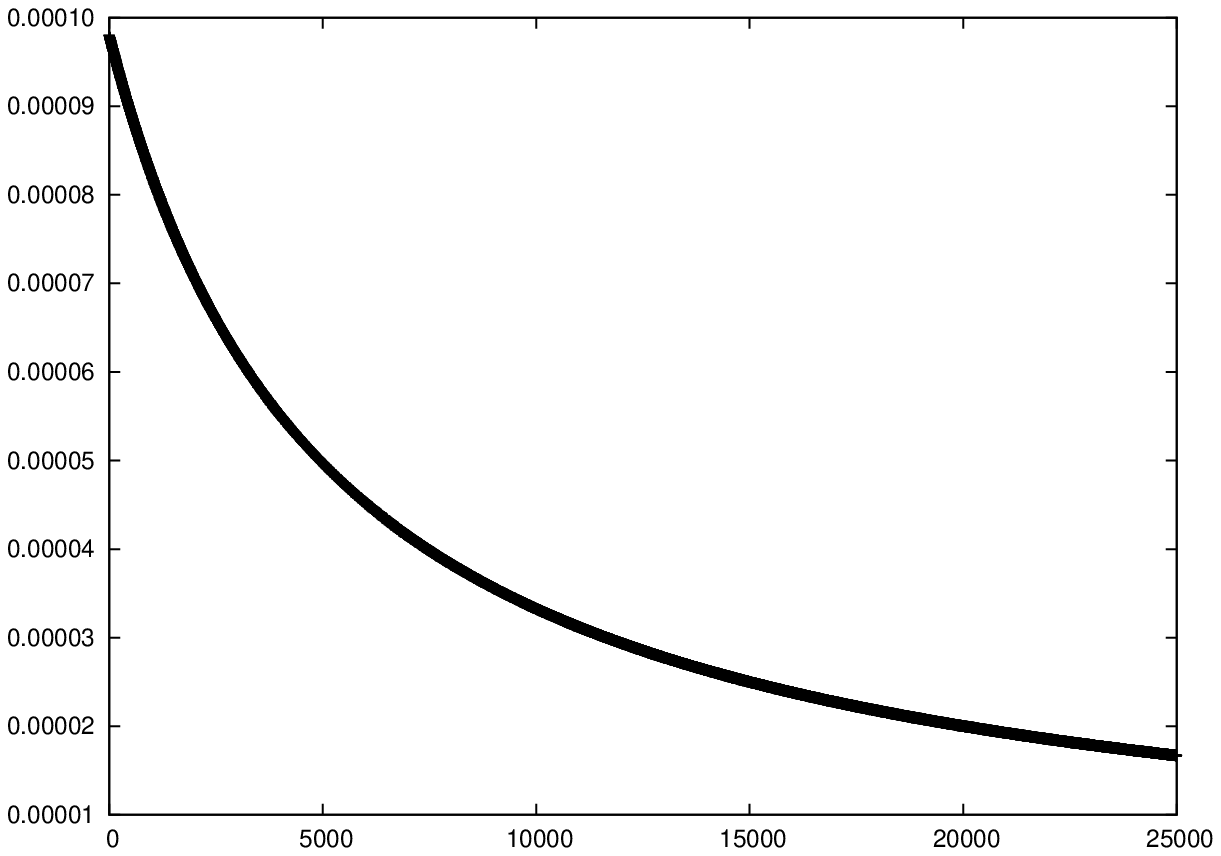}}
		\\
    \subfigure[$\mathrm{HL}(0.5, 0.2)$]
      {\includegraphics[width=0.4 \textwidth]{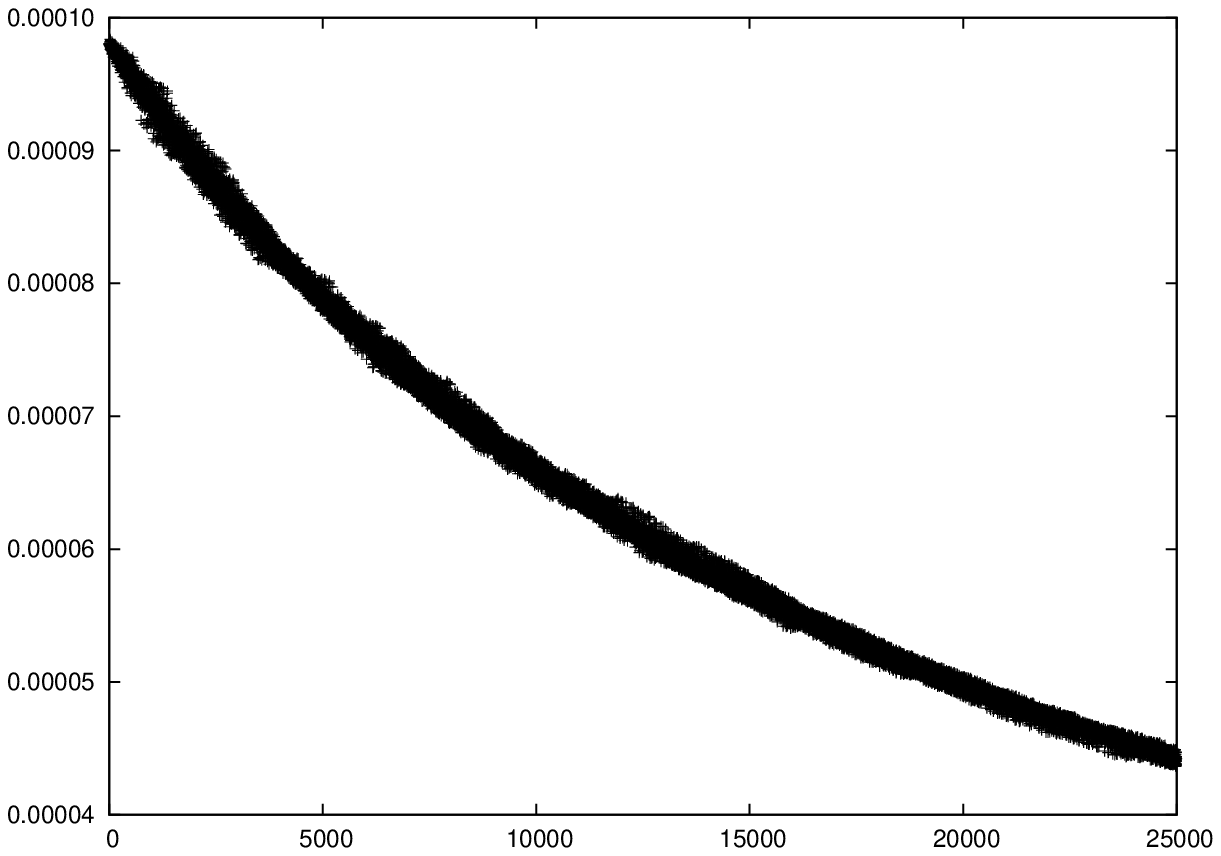}}
    \hfill
    \subfigure[$\mathrm{HL}(2, 0.2)$]
      {\includegraphics[width=0.4 \textwidth]{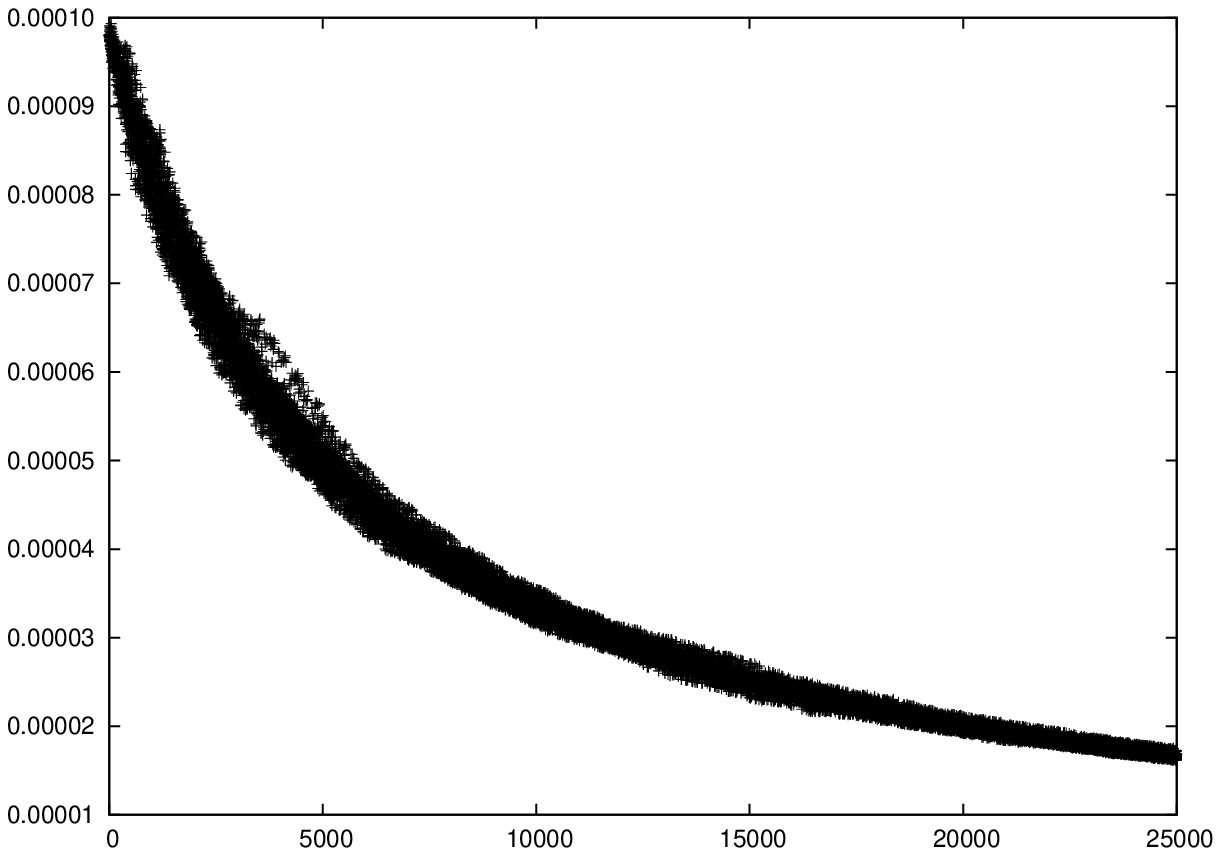}}
    \\
		\subfigure[$\mathrm{HL}(0.5, 0.02)$]
      {\includegraphics[width=0.4 \textwidth]{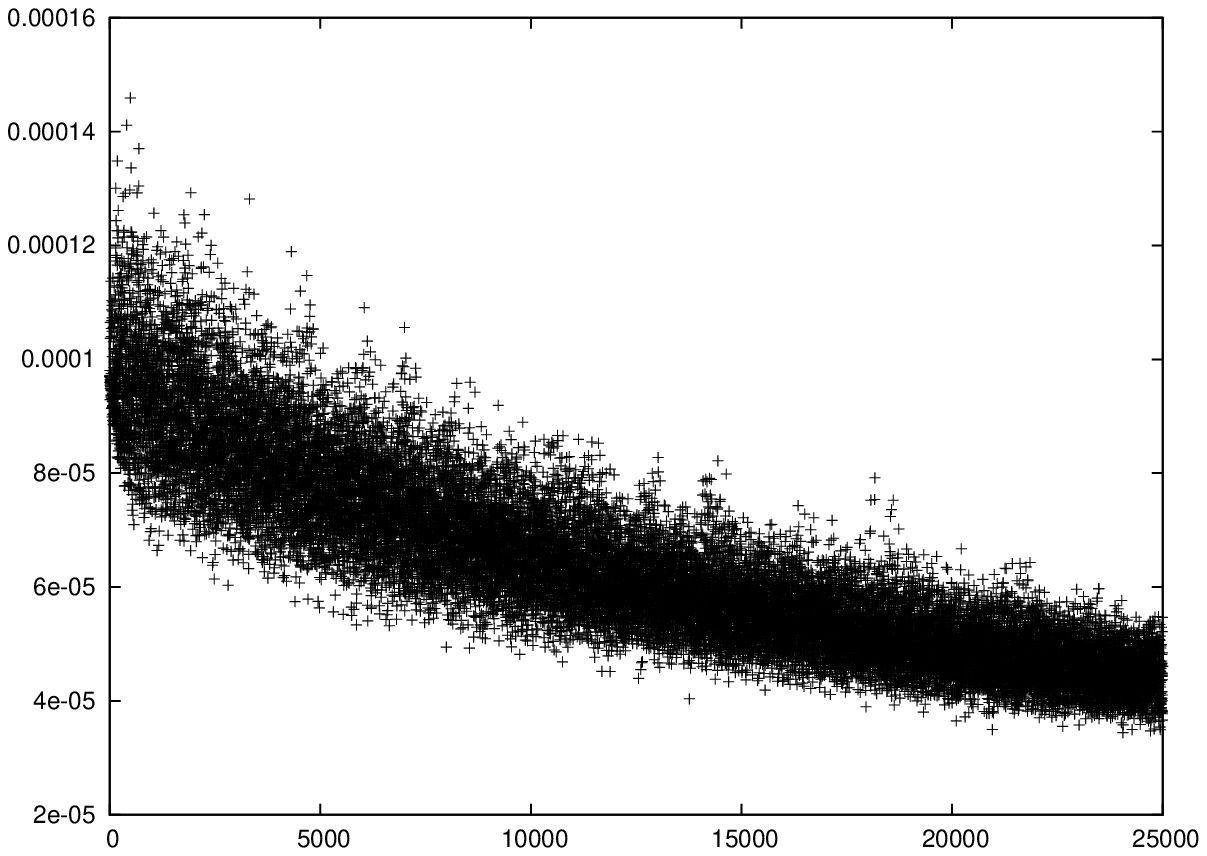}}
    \hfill
    \subfigure[$\mathrm{HL}(2, 0.02)$]
      {\includegraphics[width=0.4 \textwidth]{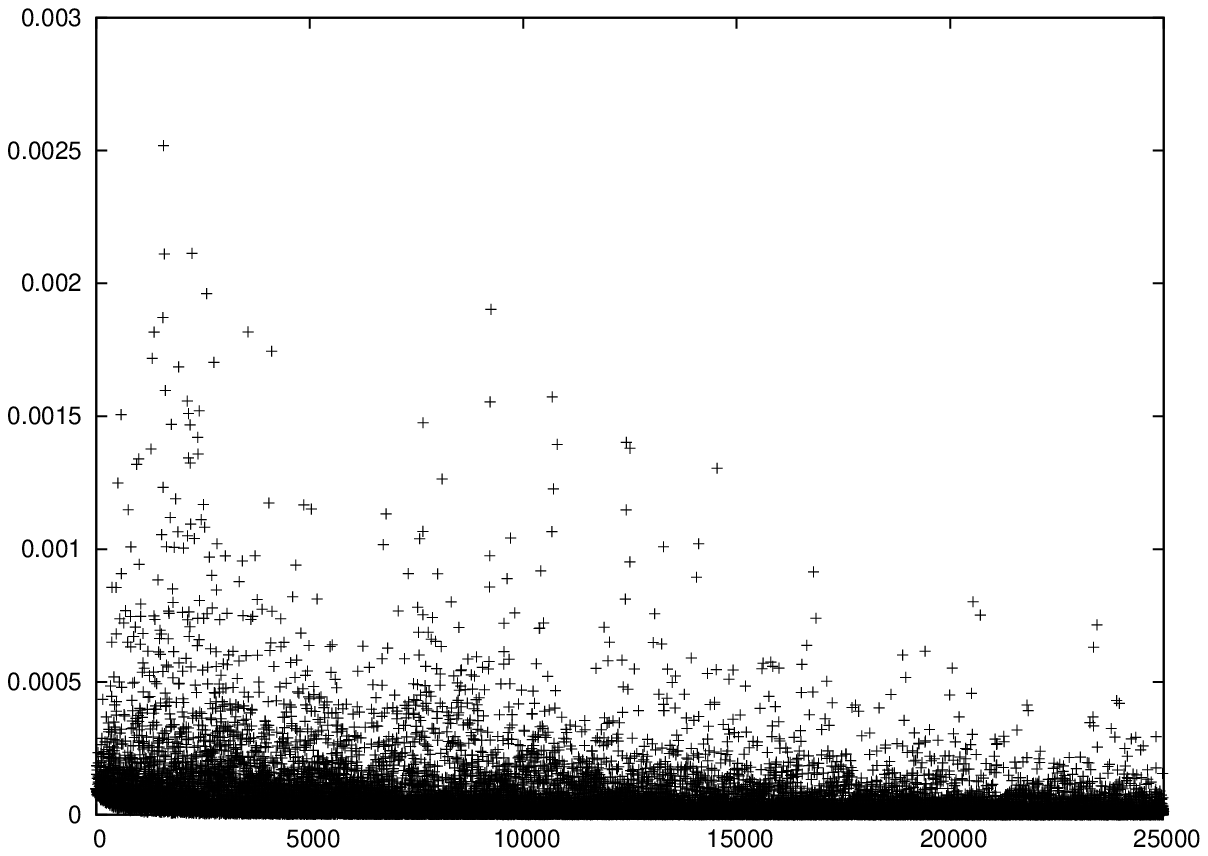}}
  \caption{\textsl{Capacity sequences $\{c_n\}_{n=1}^{25,000}$ for $\mathrm{HL}(\alpha, \parsig)$ with $\capc=10^{-4}$ for $\alpha=0.5$ (left) and $\alpha=2$ (right) when $\parsig=1$, $\parsig=0.2=2(\log \capc^{-1})^{-1/2}$ and $\parsig=0.02=d$.}}
  \label{capacityfig}
\end{figure}

Figure \ref{capacityfig} shows samples of capacity sequences 
$\{c_k\}_{k=1}^{25,000}$ for $\alpha=0.5$ and $\alpha=2$ for three different values of 
the regularization parameter $\parsig$. 
When $\parsig$ is sufficiently large compared to $\mathbf{c}$, the sequence $\{c_k\}$ is essentially indistinguishable from the 
deterministic sequence $\{c^*_k\}$ as predicted by Theorem \ref{capincrconv}; when $\parsig$ is on the order of the 
basic particle size $d$, the overall trend in $\{c_k\}$ 
follows the deterministic sequence for $\alpha =0.5$, 
but the random fluctuations increase in amplitude, whereas for $\alpha=2$ there is no clear deterministic limit for $\{c_k\}$ as $\capc\to 0$. Indeed, we do not believe that there is a deterministic limit when $\alpha>1$ if $\parsig$ is chosen small enough (say $\parsig = \mathcal{O}(\capc^{1/2})$), but 
we do not have a proof of this. Simulations further suggest that it should be possible to strengthen Theorem \ref{capincrconv}, and consequently all subsequent results, to hold for $\parsig \gg \capc^{\gamma}$ for some $\gamma \geq 1/2$. However, the distortion estimates that we use in our proof are no longer sufficient in this case.  

\begin{figure}[h!]
    \subfigure[$\mathrm{HL}(0.5, 1)$]
      {\includegraphics[width=0.33 \textwidth]{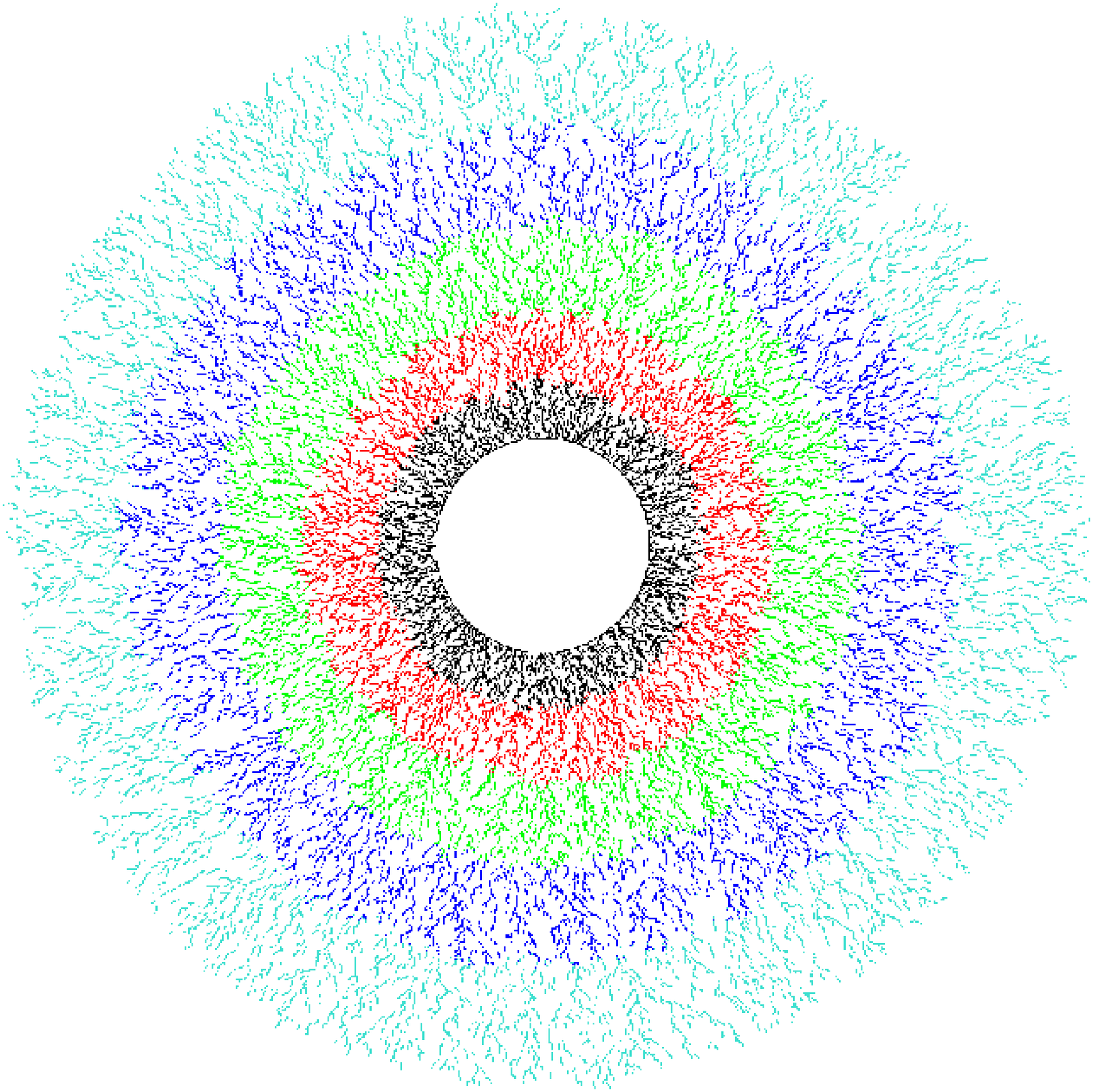}}
    \hfill
    \subfigure[$\mathrm{HL}(2, 1)$]
      {\includegraphics[width=0.33 \textwidth]{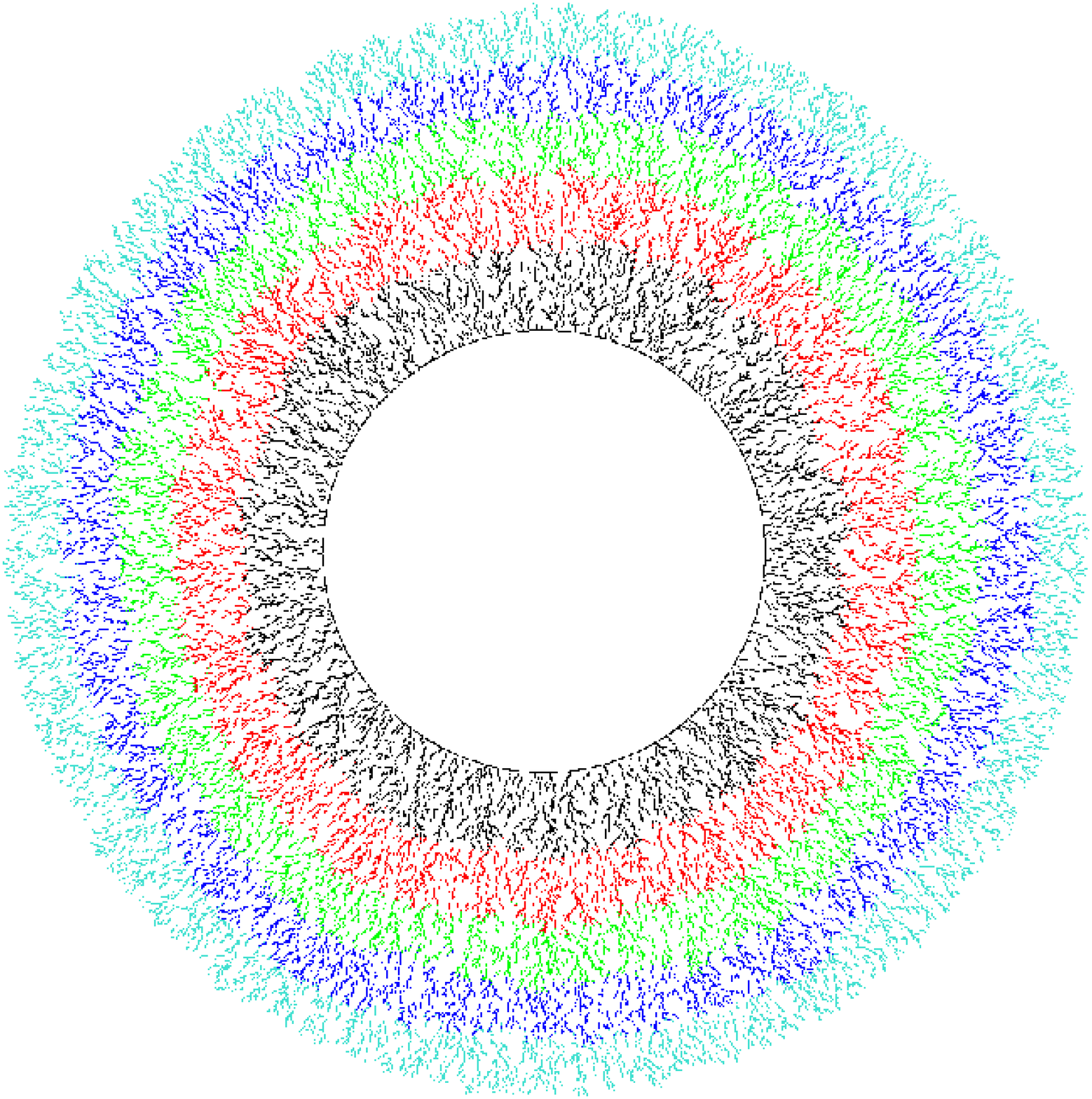}}
		\\
    \subfigure[$\mathrm{HL}(0.5, 0.2)$]
      {\includegraphics[width=0.33 \textwidth]{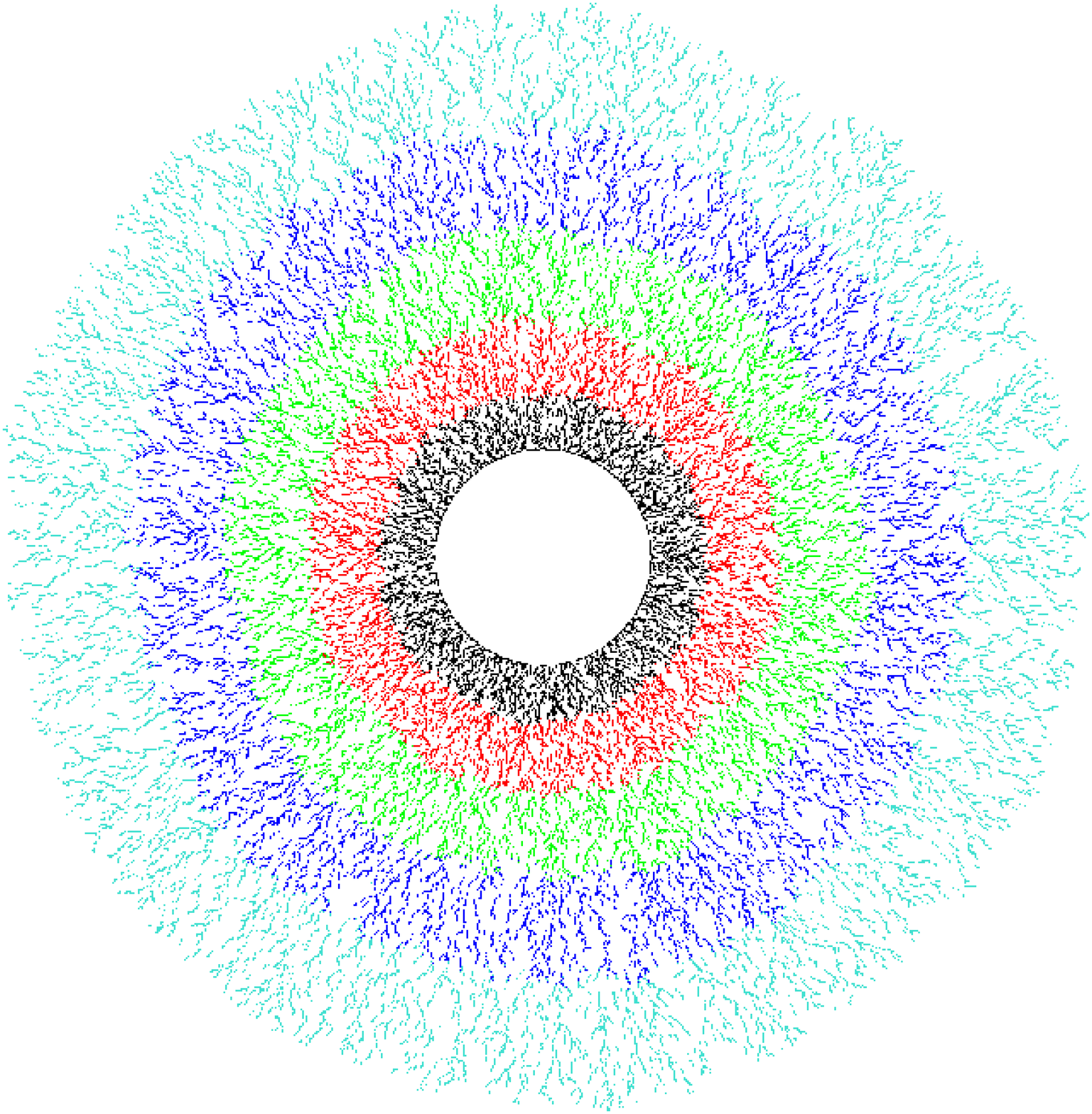}}
    \hfill
    \subfigure[$\mathrm{HL}(2, 0.2)$]
      {\includegraphics[width=0.33 \textwidth]{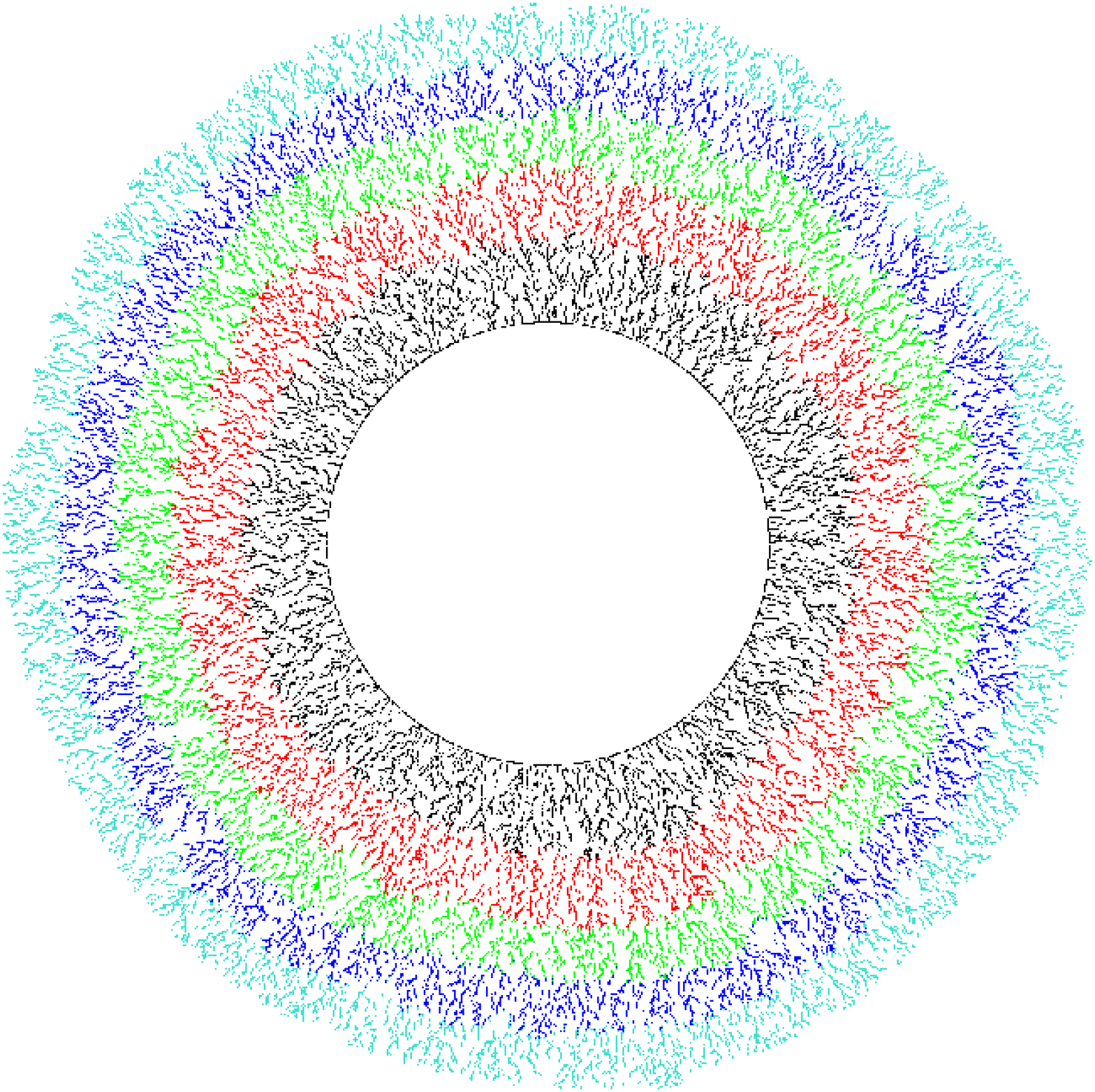}}
    \\
		\subfigure[$\mathrm{HL}(0.5, 0.02)$]
      {\includegraphics[width=0.33 \textwidth]{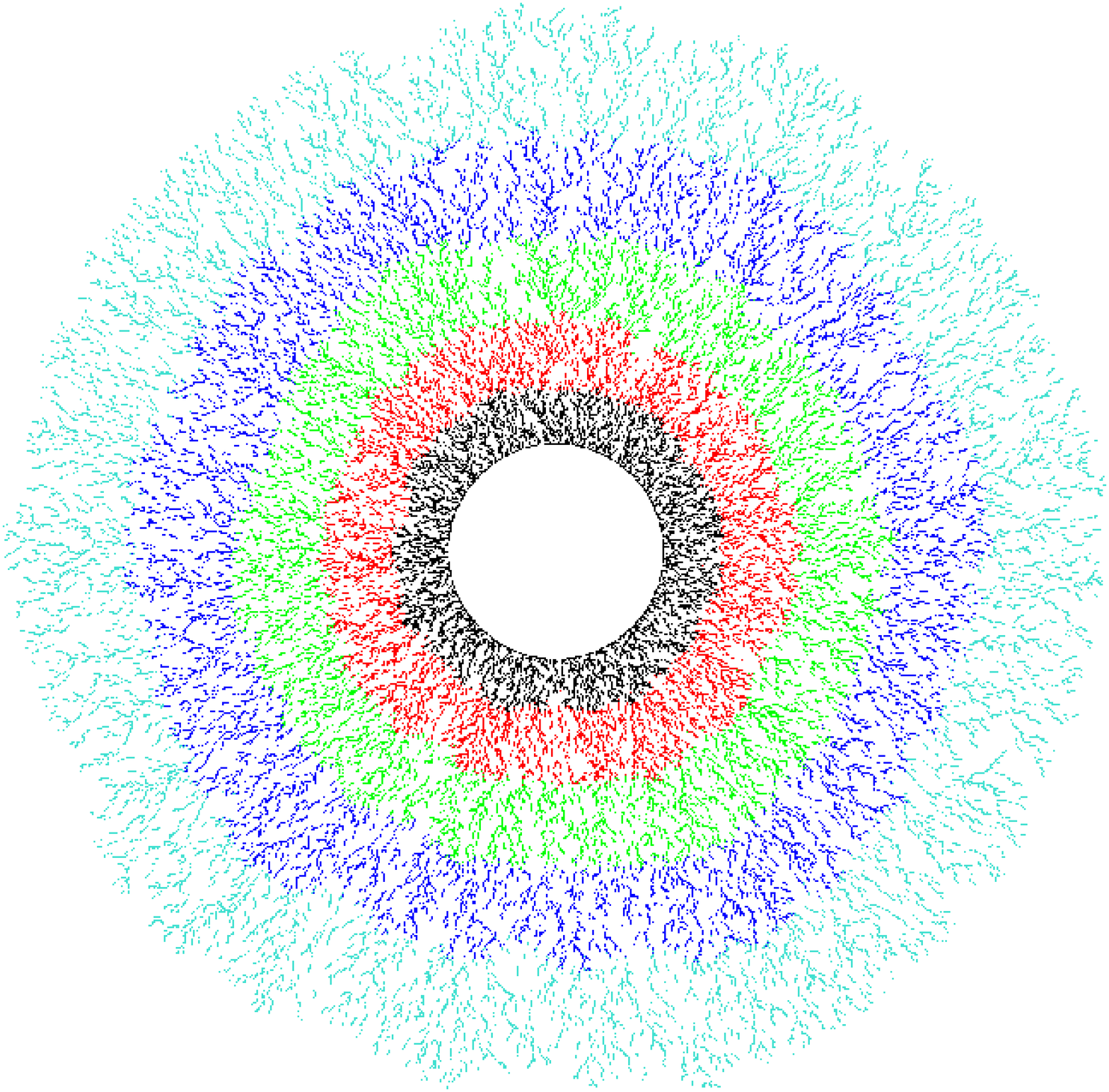}}
    \hfill
    \subfigure[$\mathrm{HL}(2, 0.02)$]
      {\includegraphics[width=0.33 \textwidth]{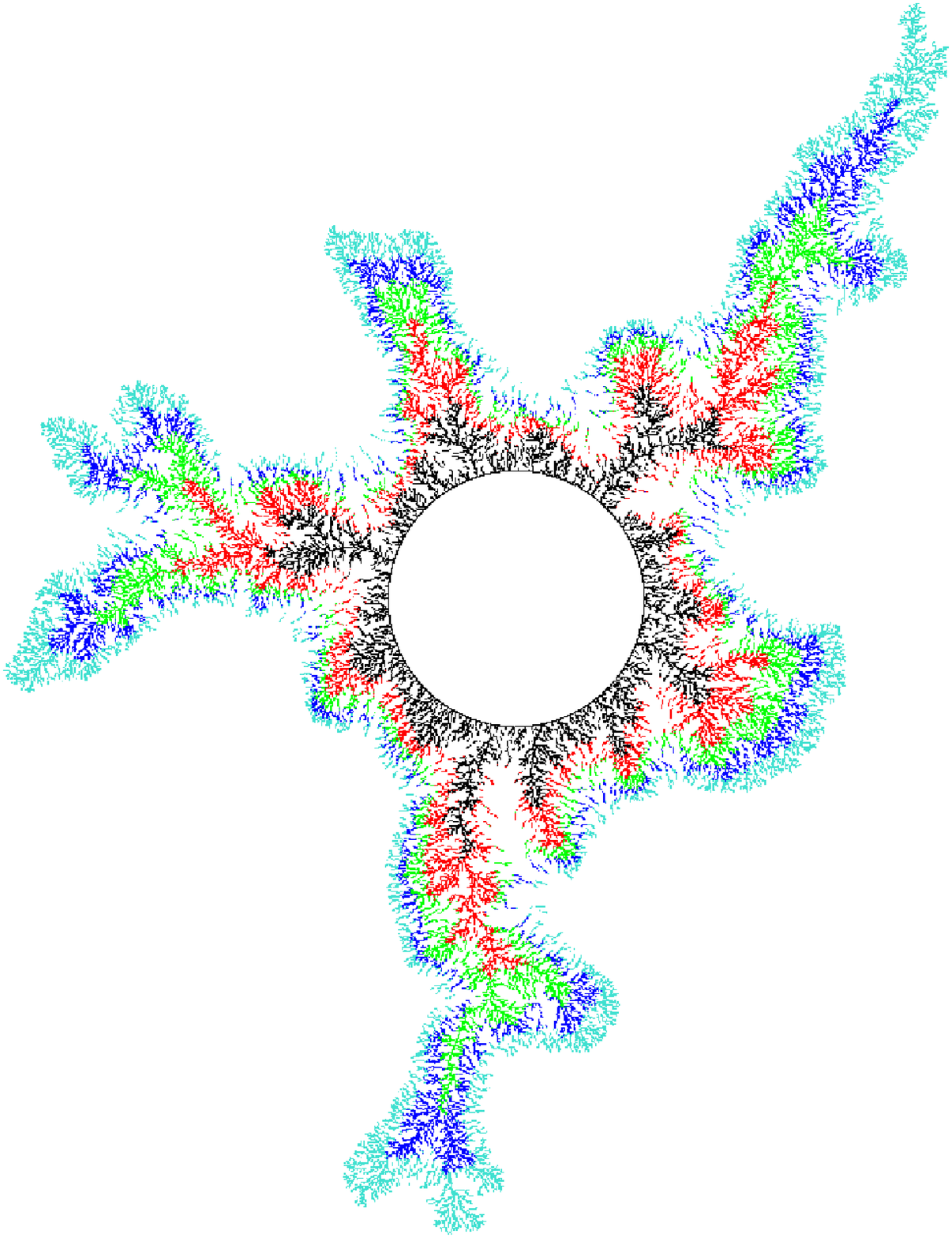}}
  \caption{\textsl{$\mathrm{HL}(\alpha, \parsig)$ clusters with $25,000$ particles when $\capc=10^{-4}$ for $\alpha=0.5$ (left) and $\alpha=2$ (right) when $\parsig=1$, $\parsig=0.2=2(\log \capc^{-1})^{-1/2}$ and $\parsig=0.02=d$. Particles arriving within the same epoch of $5,000$ arrivals have the same colour. }}
  \label{HLpics}
\end{figure}

Figure \ref{HLpics} shows simulations of clusters generated using the same capacity and angle sequences as above. 
When $\parsig$ is sufficiently large, the clusters grow as inflating disks with growth rates as predicted by Theorem \ref{MainThm1}; when $\parsig$ is on the order of the 
basic particle size $d$, the cluster is still an inflating disk for $\alpha =0.5$, whereas for $\alpha=2$ the cluster appears to be growing randomly shaped fingers. In fact, when $\parsig=d$, all simulations of clusters with $\alpha \in (0,1)$ appear to produce disks, whilst those with $\alpha \in (1,2]$ appear to produce random cluster shapes. This is consistent with the claim made by Hastings and Levitov that the cluster growth undergoes a phase transition as $\alpha$ increases through the point $\alpha_{\mathrm{crit}}=1$. We cannot at present prove any statements regarding the existence of limit clusters in this regime, or the possible presence of a sharp phase transition phenomenon.   

\begin{figure}[h!]
    \subfigure[$\mathrm{HL}(0.5, 1)$]
      {\includegraphics[width=0.4 \textwidth]{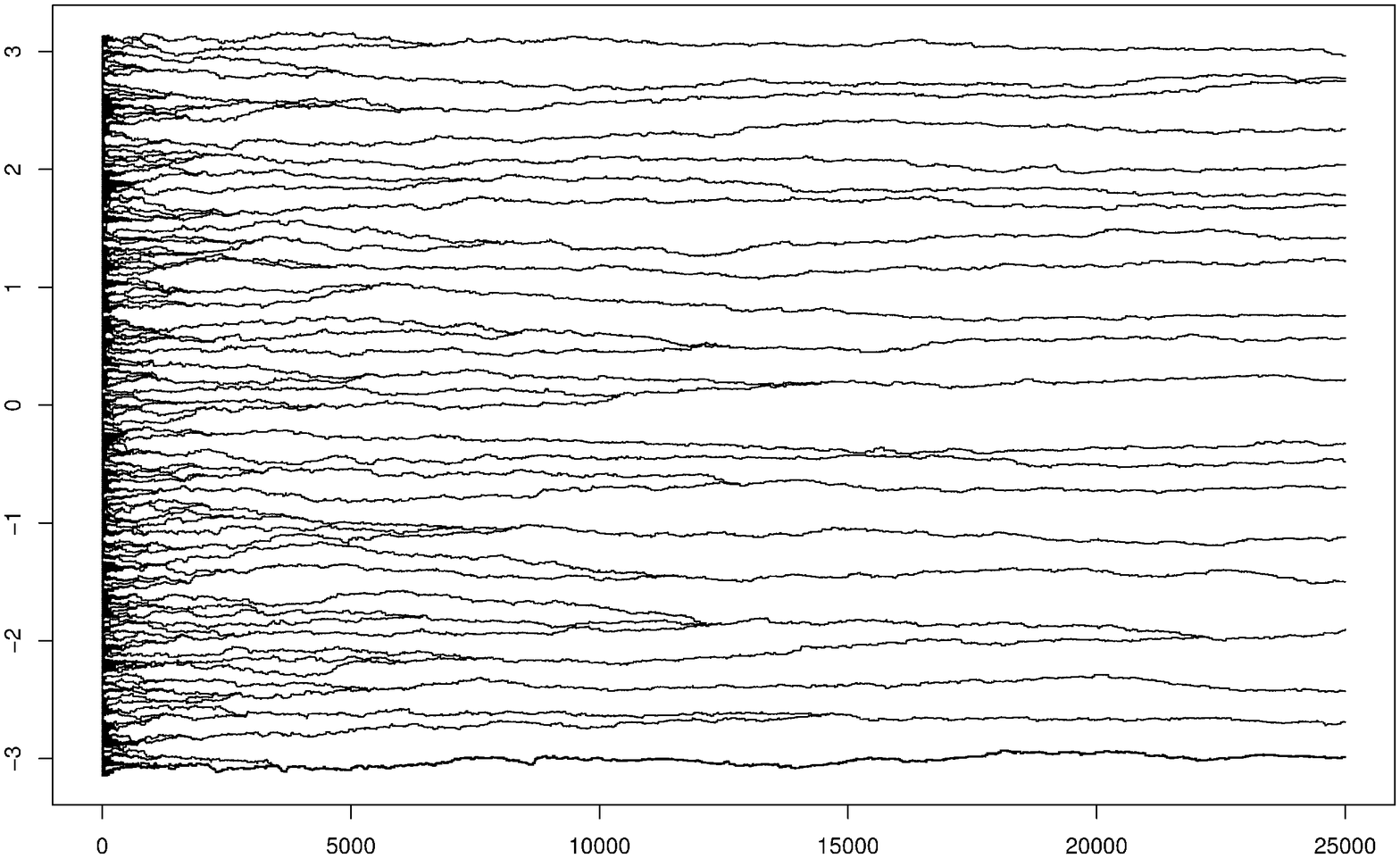}}
    \hfill
    \subfigure[$\mathrm{HL}(2, 1)$]
      {\includegraphics[width=0.4 \textwidth]{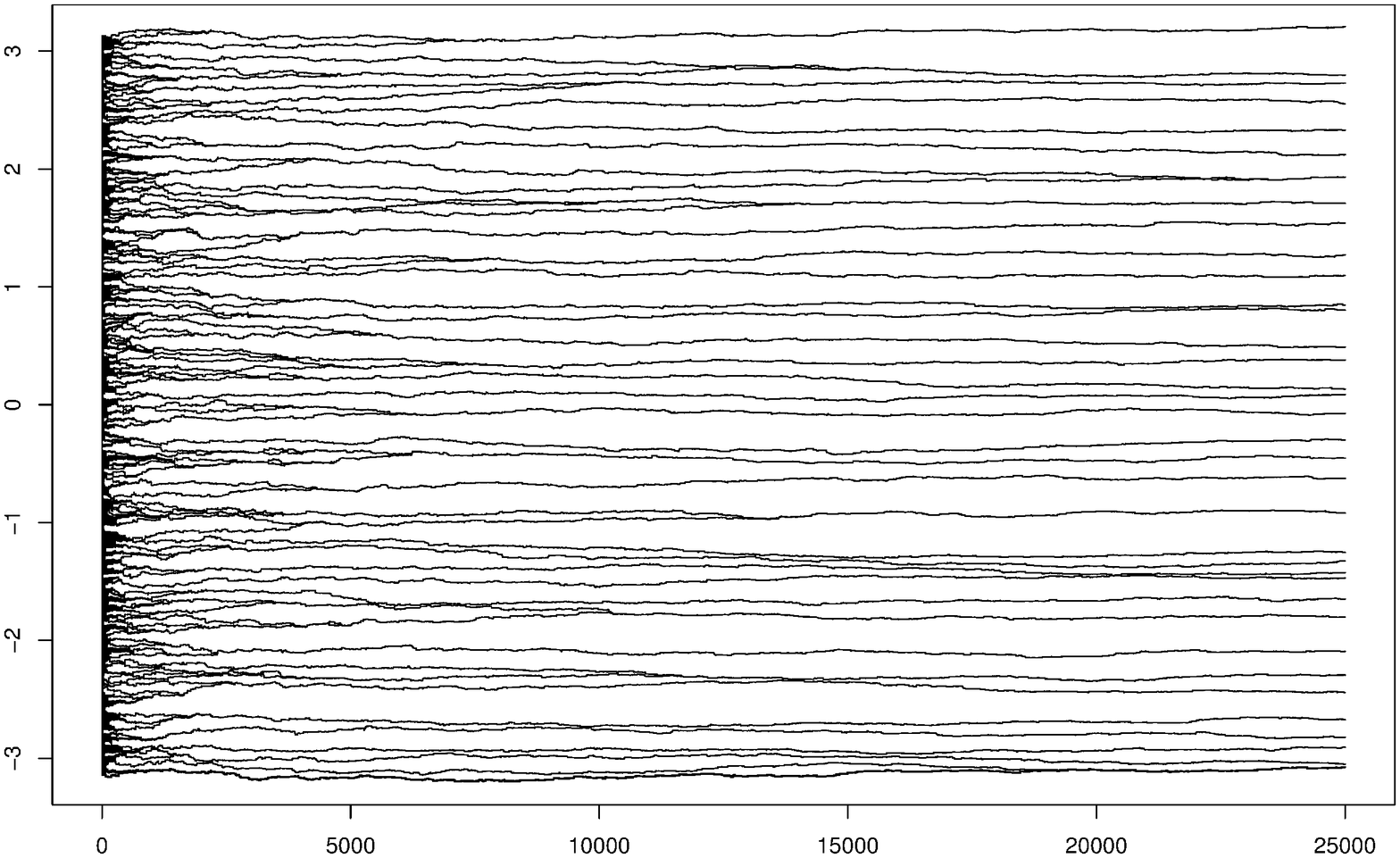}}
		\\
    \subfigure[$\mathrm{HL}(0.5, 0.2)$]
      {\includegraphics[width=0.4 \textwidth]{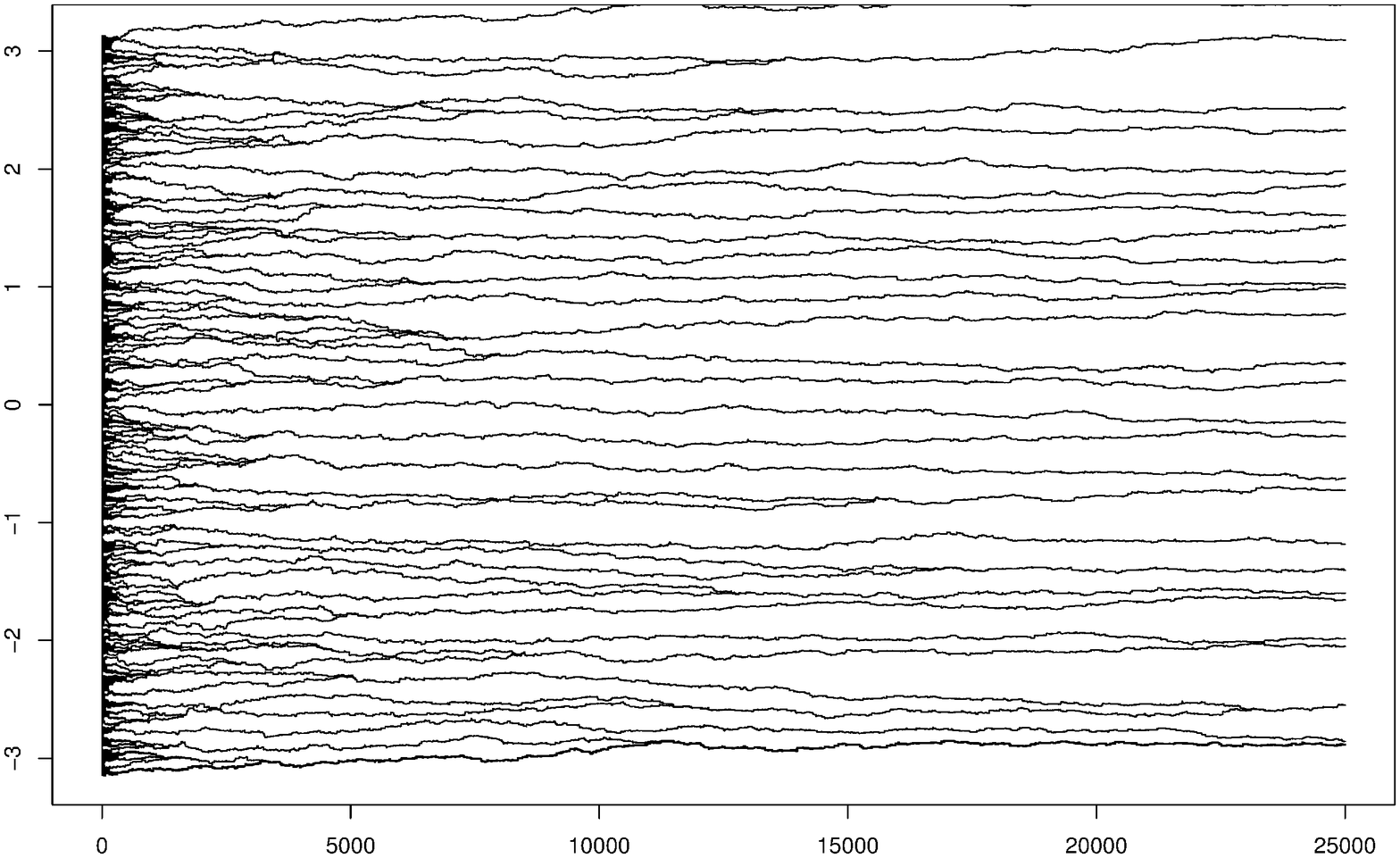}}
    \hfill
    \subfigure[$\mathrm{HL}(2, 0.2)$]
      {\includegraphics[width=0.4 \textwidth]{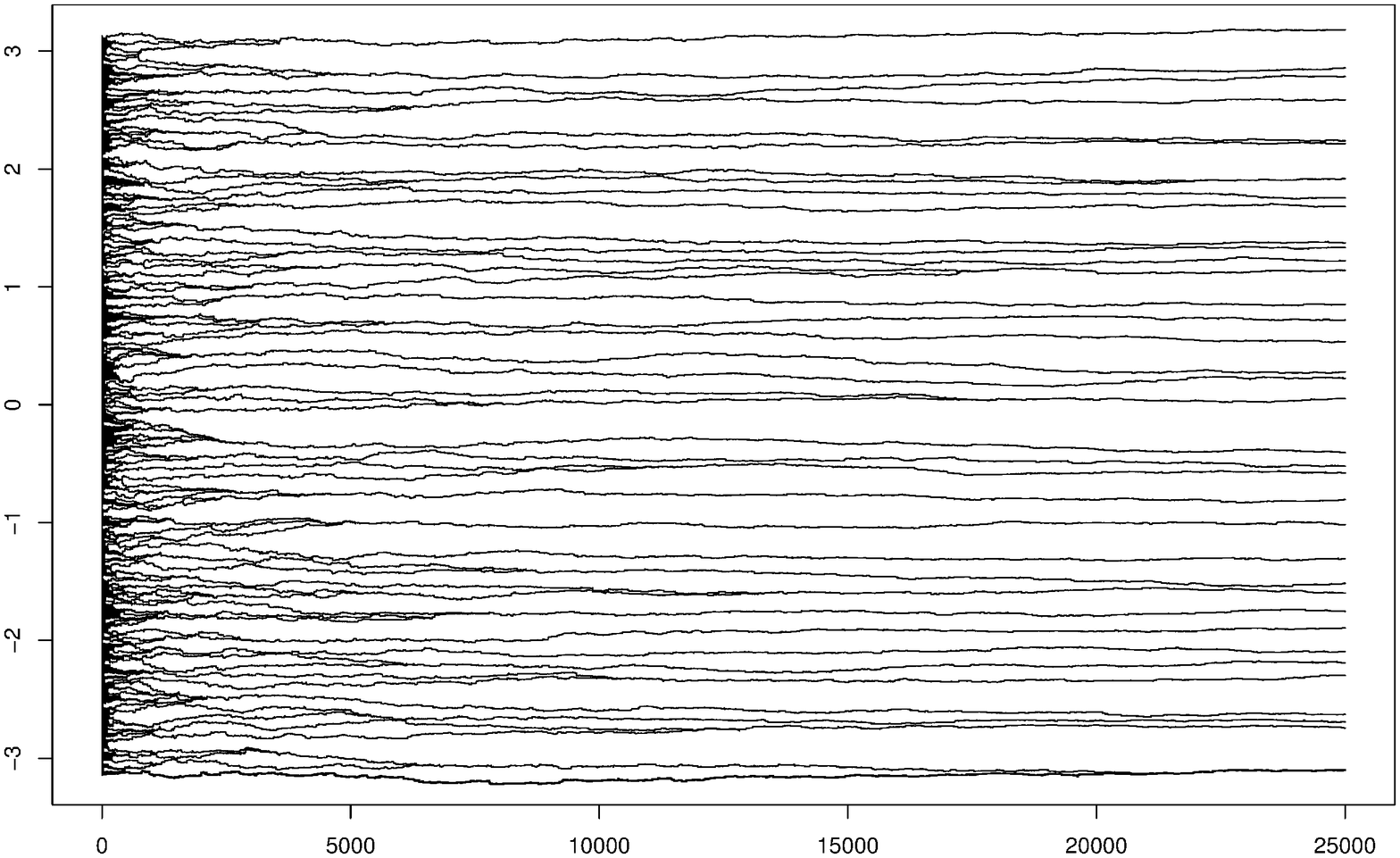}}
    \\
		\subfigure[$\mathrm{HL}(0.5, 0.02)$]
      {\includegraphics[width=0.4 \textwidth]{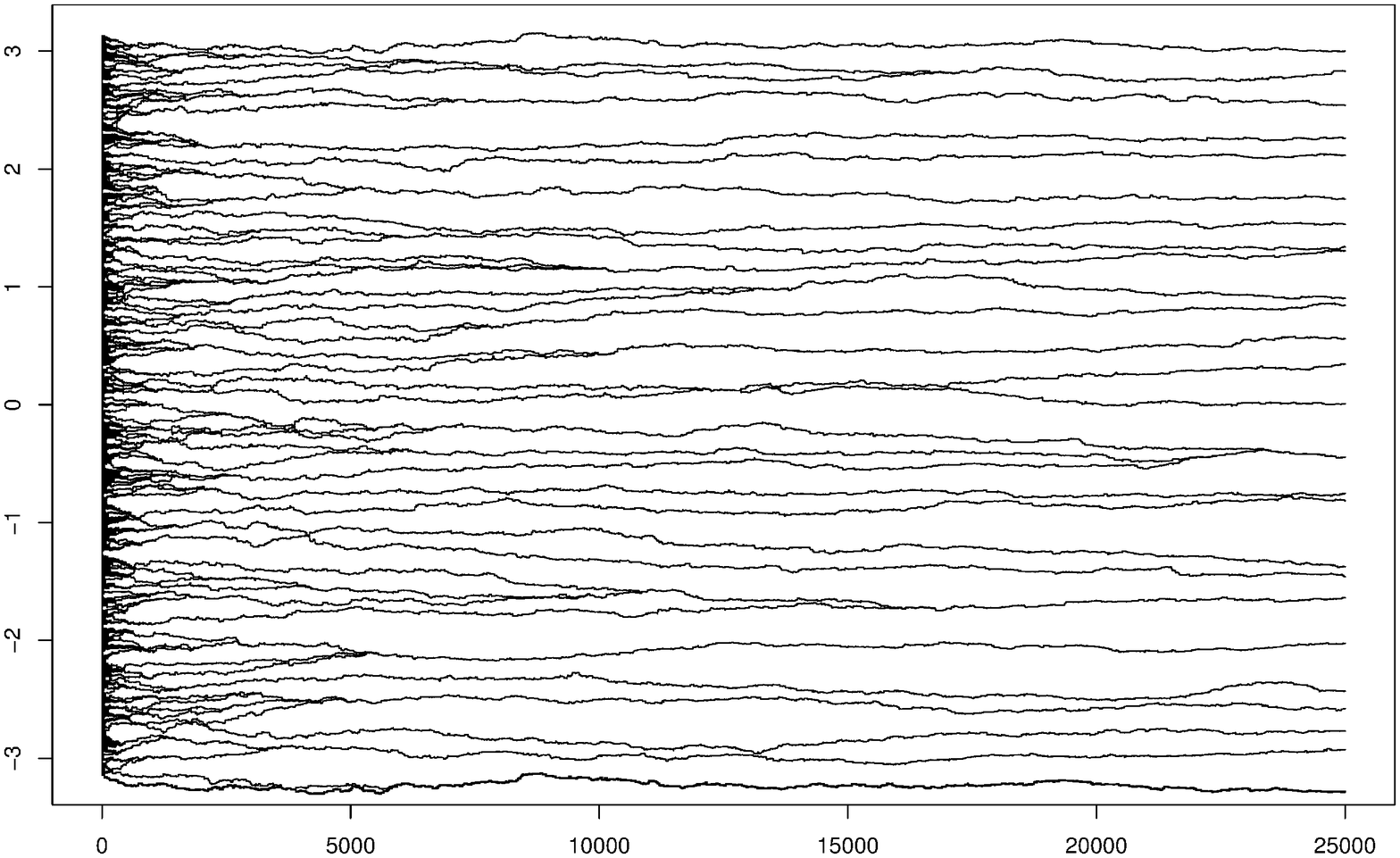}}
    \hfill
    \subfigure[$\mathrm{HL}(2, 0.02)$]
      {\includegraphics[width=0.4 \textwidth]{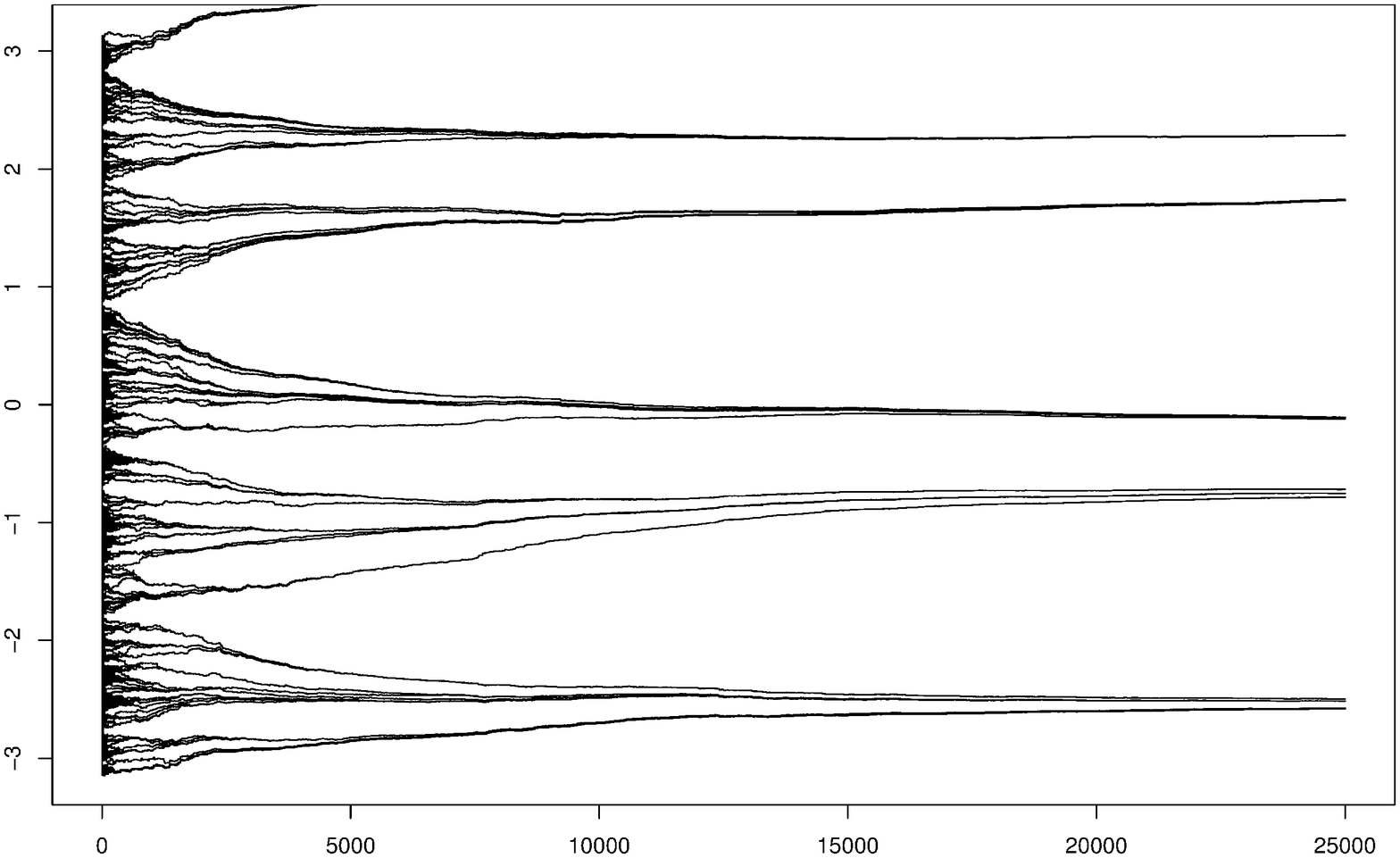}}
  \caption{\textsl{Harmonic measure flow for $\mathrm{HL}(\alpha, \parsig)$ with $25,000$ particles when $\capc=10^{-4}$ for $\alpha=0.5$ (left) and $\alpha=2$ (right) when $\parsig=1$, $\parsig=0.2=2(\log \capc^{-1})^{-1/2}$ and $\parsig=0.02=d$.}}
  \label{flowfig}
\end{figure}

Figure \ref{flowfig} illustrates the evolution of harmonic measure on the cluster boundary under 
the aggregation process. Gaps between flow lines correspond to to the harmonic 
measure carried by the fingers of the cluster that are attached between the corresponding points on the circle. 
When $\parsig$ is sufficiently large, the flows are close to the identity flow as is asserted in Theorem \ref{flowconvergence}; when $\parsig$ is on the order of the 
basic particle size $d$, the flow is still close to the identity flow for $\alpha =0.5$, whereas for $\alpha=2$ it is suggestive of a random, anisotropic scaling limit, and is reminiscent of the general features of the 
(deterministic) anisotropic flows in \cite{JST12}. As time increases, five large gaps appear between the flow lines which corresponds to the five fingers growing in Figure \ref{HLpics}(f). These gaps appear to be asymptotically stable, suggesting that, once established, the fingers will persist indefinitely. 
In fact, when $\parsig=d$, all simulations of flows with $\alpha \in (0,1)$ appear to produce identity flows, whilst those with $\alpha \in (1,2]$ appear to produce random anisotropic limits with asymptotically stable gaps, again suggestive of a phase transition as $\alpha$ increases through the point $\alpha_{\mathrm{crit}}=1$. 

\section{Preliminaries}\label{prelim}

In this section we firstly set out the notation that we will use in the remainder of the paper. We then state some estimates that will feature in our proofs later on.   

\subsection*{Notation}

Throughout this paper, $\{\theta_k\}_{k=1}^{\infty}$ denotes a sequence of independent random variables, uniformly distributed on $[0, 2 \pi)$. Let 
$\{\mathcal{F}_n\}_{n=1}^{\infty}$ be the filtration generated 
by the sequence of angles: $\mathcal{F}_n=\sigma(\theta_1,\ldots, \theta_n)$. With the exception of Section \ref{norristurner}, $\{c_k\}_{k=1}^{\infty}$ will denote the sequence of capacities defined in \eqref{sigregdef}, where $\capc>0$ is an ``initial'' capacity that will tend to zero to enable us to obtain scaling limits. In Section \ref{norristurner}, $\{c_k\}_{k=1}^{\infty}$ will be used to denote any deterministic sequence of capacities with $0<c_k\leq \capc$. The sequence of slit lengths $\{d_k\}_{k=1}^{\infty}$ is calculated from the sequence of capacities using the relation \eqref{capsizerel}.

The building blocks of our clusters are the rotated slit maps $f_k$ defined in \eqref{detmaps}, and these are composed to form the cluster maps $\Phi_n$ defined in \eqref{Phidef}.
More generally, we shall also consider the maps
\begin{equation}
\Phi_{n,m}=f_m \circ\cdots \circ f_n \colon \Delta \rightarrow D_{n,m}, \quad \textrm{for}\quad 1\leq m \leq n.
\label{Phinmdef}
\end{equation}
Note that with this notation, $\Phi_{n,1}=\Phi_n$. 
The map $\Phi_{n,m}$ has capacity
\[C_{n,m} = \sum_{k=m}^n c_k\]
and we write $C_n=C_{n,1}$.

In our analysis, we 
also need to consider the inverse conformal maps 
\[g_{\capc}=f_{\capc}^{-1} \colon \Delta \setminus (1, 1+d] \rightarrow \Delta \]
 and the corresponding $g_k = f_k^{-1}$, as well as
\[\Gamma_{n}=(\Phi_{n})^{-1}\colon D_n\rightarrow \Delta, \] 
along with the functions $\Gamma_{n,m}=(\Phi_{n,m})^{-1}$.

We often find it convenient to use logarithmic coordinates. To 
this end, 
we write 
\[\CC_{\parsig+}=\{z\in \CC: \mathrm{Re}(z)>\parsig \} \quad \textrm{and} \quad 
\CC_+=\CC_{0+},\]
define $\tilde{g}_{\capc}(z)=\log(g_{\capc}(e^z))$,
and setting  $\tilde{D}_{n,m} = \{z: e^z \in D_{n,m} \}$ we introduce
\[\tilde{\Gamma}_{n,m}(z)=\log\left[\Gamma_{n,m}(e^z)\right], 
\quad z \in \tilde{D}_{n,m}. \]
The map $\tilde{g}_{\capc}$ extends locally in a continuous way to the imaginary axis so we can define a continuous map $\g_\capc$ from the open set $\mathbb{R} \setminus 2 \pi \mathbb{Z}$ to itself by
\[
\gamma_\capc(x) = \IIm \tilde{g}_{\capc} (ix), \quad x\in(0,2 \pi).
\]

For each of the above definitions we also define ``starred'' versions in which the capacity sequence $\{c_k\}_{k=1}^\infty$ is replaced by the sequence $\{c_k^*\}_{k=1}^\infty$ defined in \eqref{detcaps}. We denote these using the notation $f_k^*, \Phi_n^*$, and so on.

\subsection*{Estimates}

Our analysis relies heavily on bounds on the building block 
$f_{\capc}(z)$ and its derivate. More 
precisely, in our coupling argument, 
we need to be able to compare maps associated 
with different capacity increments, evaluated at the same point.
We list below a collection of such estimates. We use the letter $A$ to indicate generic absolute constants. In cases where it is useful to track these absolute constants, we add subscripts. 

When working with slit maps, estimates 
can obtained by performing an asymptotic analysis of the explicit expressions 
for the mapping: one finds that if $|z-1|>2 d$, then, as 
$\capc \rightarrow 0$,
\begin{equation*}
f_{\capc}(z)=z\left(1+\capc\frac{z+1}{z-1}\right)
+\mathcal{O}\left(\frac{\capc^2}{(z-1)^3}\right),
\label{mapasymp}
\end{equation*}
\begin{equation*}
f'_{\capc}(z)=1+\capc\left(1-\frac{2}{(z-1)^2}\right)
+\mathcal{O}\left(\frac{\capc^2}{(z-1)^3}\right),
\label{realderasymp}
\end{equation*}
and 
\begin{equation*}
\frac{1}{f'_{\capc}(z)}=1+\capc\left(\frac{2}{(z-1)^2}-1\right)
+\mathcal{O}\left(\frac{\capc^2}{(z-1)^3}\right).
\label{derasymp}
\end{equation*}
However, using standard results from Loewner theory yield 
cleaner bounds that are easier to extend to more general particles. Let $\{p_t(z)\}_{t \ge 0}$ be a family of analytic functions such that for each $t$, $p_t(z)$ has strictly positive real part for $z \in \Delta$. The 
Loewner differential equation,
\begin{equation}
\dot{\varphi}_t(z)=z\varphi'_t(z)p_t(z), \quad z\in \Delta, t>0,
\label{loewnereq}
\end{equation}
together with the initial condition $\varphi_0=z$ then parametrizes a family $
\{\varphi_t\}_{t\geq 0}$ of 
conformal maps  
\[\varphi_t\colon \Delta \rightarrow D_t,\]
onto simply connected domains $D_t$ forming a decreasing sequence, 
with expansions at infinity of the form $\varphi_t(z)=e^tz+\mathcal{O}(1)$. 
Notice that there is a family of probability measures $\{\mu_t\}_{t>0}$ on the unit circle such that
\[p_t(z)=\int_{\TT}\frac{z+\zeta}{z-\zeta}d\mu_t(\zeta).\]
In this parametrization of the Loewner mappings, the time parameter 
$t>0$ corresponds precisely to the total capacity of 
cluster $K_t=\CC\setminus D_t$ via $\mathrm{cap}(K_t)=e^t$.   

In particular, the mappings onto the complement of $\mathrm{HL}(\alpha, \parsig)$ clusters 
can be obtained by considering the measure-valued process with
\begin{equation}
d\mu_t(e^{i\theta})=\sum_{k=1}^{\infty}\mathbf{1}_{[C_{k-1},C_k)}(t)\delta_{\theta_k},
\label{drivingmeasures}
\end{equation}
solving the corresponding Loewner equation, and setting 
\[\Phi_n=\varphi_{C_n}.\]
We refer the reader to \cite{CM01, Lawbook, JST12} for background material on the 
Loewner equation in the present context.

The following result is stated in \cite{F1}  
in a more general setting (see \cite[Section 3]{FL11} for a proof in the chordal case; the radial case is entirely similar). We have adapted the 
notation to match that of the present paper. 
In our case, the Loewner driving terms (in the notation of \cite{F1}) 
are of the form 
$W(t)=\Theta \mathbf{1}_{[0,\capc)}(t)$, where $\Theta$ has 
uniform distribution on $[0,2\pi]$. 

\begin{lemma}\label{mapestimate}
There exists a 
constant $0<A_0<\infty$ such that, if $|z|-1>2(d\vee d^*)$, then
\begin{equation}
\label{fratio}
e^{-A_0\frac{|\capc-\capc^*|}{(|z|-1)^2}}\leq 
\left|\frac{f'_{\capc}(z)}{f'_{\capc^*}(z)}\right|\leq e^{A_0\frac{|\capc-\capc^*|}{(|z|-1)^2}}
\end{equation}
and
\begin{equation}
|f_{\capc}(z)-f_{\capc^*}(z)|\leq A_0|f'_{\capc \wedge \capc^*}(z)|(|z|-1)
\left(e^{A_0\frac{|\capc-\capc^*|}{(|z|-1)^2}}-1\right).
\label{loewnerfunction}
\end{equation}
\end{lemma}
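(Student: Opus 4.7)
The plan is to realize both $f_{\capc}$ and $f_{\capc^*}$ as snapshots of a single radial Loewner evolution. Since each corresponds to attaching a single slit at the point $1$, both arise as $\varphi_{\capc}$ and $\varphi_{\capc^*}$ from the solution $\{\varphi_t\}$ of \eqref{loewnereq} with stationary point-mass driving $\mu_t \equiv \delta_1$, so that $p_t(z) = p(z) := (z+1)/(z-1)$ is $t$-independent. Without loss of generality I would assume $\capc \ge \capc^*$ and compare the two values by integrating in $t$ over $[\capc^*, \capc]$.

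For the ratio bound \eqref{fratio}, I would differentiate the Loewner equation $\dot{\varphi}_t(z) = z\varphi'_t(z) p(z)$ in $z$ and divide through by $\varphi'_t(z)$ to obtain
\[
\frac{d}{dt}\log \varphi'_t(z) = p(z) + z p'(z) + z p(z) \frac{\varphi''_t(z)}{\varphi'_t(z)}.
\]
Since $p(z) = (z+1)/(z-1)$, a direct calculation gives $|p(z)|, |zp'(z)| \le A/(|z|-1)^2$ in the range $|z|-1 > 2(d \vee d^*)$, while a Koebe-type distortion estimate for univalent maps on the exterior disk yields $|\varphi''_t(z)/\varphi'_t(z)| \le A/(|z|-1)$ uniformly in $t \in [\capc^*,\capc]$. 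Combining these produces $|\tfrac{d}{dt}\log \varphi'_t(z)| \le A_0/(|z|-1)^2$, and integrating in $t$ followed by exponentiation then yields \eqref{fratio}.

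For the difference estimate \eqref{loewnerfunction}, I would integrate the Loewner equation directly,
\[
f_{\capc}(z) - f_{\capc^*}(z) = \int_{\capc^*}^{\capc} z\, \varphi'_t(z)\, p(z)\, dt,
\]
and insert the derivative bound just established in the form $|\varphi'_t(z)| \le |f'_{\capc^*}(z)| \exp\!\bigl(A_0(t-\capc^*)/(|z|-1)^2\bigr)$, together with the pointwise bound $|z p(z)| \le A/(|z|-1)$ that holds in the relevant range. The resulting elementary integral $\int_{\capc^*}^{\capc} e^{\alpha(t-\capc^*)}\, dt = (e^{\alpha(\capc-\capc^*)}-1)/\alpha$ with $\alpha = A_0/(|z|-1)^2$ gives a factor of $(|z|-1)^2 (e^{A_0(\capc-\capc^*)/(|z|-1)^2}-1)/A_0$; combining with $|zp(z)|$ and $|f'_{\capc^*}(z)|$ and absorbing constants recovers the form of \eqref{loewnerfunction}, using symmetry in $\capc,\capc^*$ to replace $f'_{\capc^*}$ by $f'_{\capc \wedge \capc^*}$.

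The main obstacle I anticipate is securing the Koebe-type bound $|\varphi''_t(z)/\varphi'_t(z)| \le A/(|z|-1)$ with the constant $A$ absolute and independent of $t \in [\capc^*,\capc]$. The univalence of $\varphi_t$ on $\Delta$ together with the normalization $\varphi_t(z) = e^t z + \mathcal{O}(1)$ at infinity should suffice, but one must keep careful track that the omitted compact $K_t$ remains uniformly bounded as $t$ grows, so that the distortion constants do not deteriorate. Once this Koebe-type inequality is in hand, the remainder of the argument is straightforward calculus and bookkeeping of constants.
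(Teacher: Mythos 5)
The paper does not prove this lemma: it is imported from \cite{F1} with a pointer to \cite[Section 3]{FL11} for the chordal analogue, so there is no internal argument to compare against. Your Loewner-flow strategy is the right one and matches what underlies those references: realize $f_{\capc}$ and $f_{\capc^*}$ as $\varphi_t$ at two times for the static driving measure $\delta_1$, differentiate, and control $\frac{d}{dt}\log\varphi'_t(z)$ over $[\capc^*,\capc]$.

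There is, however, a gap in the intermediate estimates as you state them. You claim $|p(z)|,|zp'(z)|\le A/(|z|-1)^2$ throughout $|z|-1>2(d\vee d^*)$, but that hypothesis imposes no upper bound on $|z|$, and $p(z)=(z+1)/(z-1)\to 1$ as $|z|\to\infty$ while $A/(|z|-1)^2\to 0$, so the bound on $|p(z)|$ is simply false at large $|z|$. This is not cosmetic: the map satisfies $f'_{\capc}(\infty)/f'_{\capc^*}(\infty)=e^{\capc-\capc^*}$, so $\frac{d}{dt}\log\varphi'_t(z)\to 1$ at infinity and integrating over $[\capc^*,\capc]$ produces a drift $\capc-\capc^*$ that is \emph{not} bounded by $A_0|\capc-\capc^*|/(|z|-1)^2$ for unbounded $|z|$ (indeed \eqref{fratio} as written fails there). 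The correct structure exploits a cancellation: $p(z)+zp'(z)=1-2/(z-1)^2$, so $\frac{d}{dt}\log\varphi'_t(z)-1=-2/(z-1)^2+zp(z)\,\varphi''_t(z)/\varphi'_t(z)$, which does decay like $(|z|-1)^{-2}$. The lemma should therefore be read as holding on a bounded region, $1<|z|\le A$ say, which is what the paper uses ($z=e^{\parsig+i\theta}$ with $\parsig$ small); there $1/(|z|-1)\le 1/(|z|-1)^2$ and the drift is absorbed into the right side, and the rest of your calculation — including the integration for \eqref{loewnerfunction} — goes through. The Koebe-type bound you flag is not an obstacle: $\varphi''_t/\varphi'_t$ is unchanged on replacing $\varphi_t$ by $e^{-t}\varphi_t$, a normalized univalent map of $\Delta$, so the distortion theorem supplies an absolute constant.
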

We shall implement this result in the following guise. 
\begin{lemma}\label{capreplacement}
There exists a 
constant $0<A_0<\infty$ such that, if $|z|-1>2(d\vee d^*)$, then
\begin{equation}
\left|\log\left|\frac{f'_{\capc}(z)}{f'_{\capc^*}(z)}\right|\right|\leq A_0\frac{|\capc-\capc^*|}{(|z|-1)^2}.
\label{capderivative}
\end{equation}
Further, there exists a constant $1<A_1<\infty$ such that if $(|z|-1)^2 > A_1 (\capc \vee \capc^*)$ 
then 
\begin{equation}
|f_{\capc}(z)-f_{\capc^*}(z)|\leq A_1\frac{|\capc-\capc^*|}{|z|-1}.
\label{capfunction}
\end{equation}
\end{lemma}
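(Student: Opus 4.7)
\medskip

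The plan is to derive both inequalities directly from Lemma \ref{mapestimate} through elementary manipulations, with the condition $(|z|-1)^{2}>A_{1}(\capc\vee\capc^{*})$ used to linearize an exponential and to control the size of $|f'_{\capc\wedge\capc^{*}}(z)|$.

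For the first inequality, I would simply take logarithms in the two-sided bound \eqref{fratio} of Lemma \ref{mapestimate}: the hypothesis $|z|-1>2(d\vee d^{*})$ is exactly the one under which \eqref{fratio} is available, and taking $\log$ throughout converts the chain of inequalities $e^{-A_{0}x}\le |f'_{\capc}/f'_{\capc^{*}}|\le e^{A_{0}x}$ (with $x=|\capc-\capc^{*}|/(|z|-1)^{2}$) into $\bigl|\log|f'_{\capc}(z)/f'_{\capc^{*}}(z)|\bigr|\le A_{0}x$, which is \eqref{capderivative} with the same constant $A_{0}$.

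For the second inequality, the starting point is \eqref{loewnerfunction}. First, I would choose $A_{1}$ large enough that the exponent $A_{0}|\capc-\capc^{*}|/(|z|-1)^{2}$ is bounded by, say, $1$ whenever $(|z|-1)^{2}>A_{1}(\capc\vee\capc^{*})$; since $|\capc-\capc^{*}|\le \capc\vee\capc^{*}$, taking $A_{1}\ge A_{0}$ suffices. On this range one has the elementary estimate $e^{t}-1\le 2t$ for $t\in[0,1]$, so
\[
e^{A_{0}|\capc-\capc^{*}|/(|z|-1)^{2}}-1 \ \le\ \frac{2A_{0}|\capc-\capc^{*}|}{(|z|-1)^{2}}.
\]
The remaining task is to bound $|f'_{\capc\wedge\capc^{*}}(z)|$ by an absolute constant on the same range. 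This is the only step requiring a small additional input: either one substitutes the asymptotic expansion of $f'_{\capc}$ displayed earlier in the section, which shows that $|f'_{\capc}(z)|=1+O(\capc/(|z|-1)^{2})$ is bounded by, say, $2$ when $(|z|-1)^{2}\ge A_{1}\capc$ for $A_{1}$ sufficiently large, or one invokes the Loewner equation \eqref{loewnereq} driven by a single point mass to get $|f'_{\capc}(z)|\le C$ on the same set. Combining these bounds with \eqref{loewnerfunction} gives
\[
|f_{\capc}(z)-f_{\capc^{*}}(z)|\ \le\ A_{0}\cdot C\cdot(|z|-1)\cdot\frac{2A_{0}|\capc-\capc^{*}|}{(|z|-1)^{2}} \ =\ \frac{2A_{0}^{2}C\,|\capc-\capc^{*}|}{|z|-1},
\]
and by enlarging $A_{1}$ if necessary so that $A_{1}\ge 2A_{0}^{2}C$, this is exactly \eqref{capfunction}.

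There is no real obstacle here; the lemma is a bookkeeping reformulation of Lemma \ref{mapestimate} in a form convenient for later recursive estimates, and the only mildly delicate point is choosing $A_{1}$ large enough to simultaneously ensure the linearization of the exponential and a uniform upper bound on $|f'_{\capc\wedge\capc^{*}}(z)|$. Both requirements are of the form ``$(|z|-1)^{2}$ is a sufficiently large multiple of $\capc\vee\capc^{*}$'', so they are compatible, and the explicit form of the slit map (or, equivalently, the Loewner equation with a single atom as driving measure) makes the distortion bound elementary.
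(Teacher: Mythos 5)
Your proof is correct and follows essentially the same route as the paper: take logarithms in \eqref{fratio} for the first inequality, and for the second linearize the exponential in \eqref{loewnerfunction} and supply a uniform bound on $|f'_{\capc\wedge\capc^*}(z)|$. The only small cosmetic difference is that the paper obtains the derivative bound by the self-contained trick of putting $\capc^*=0$ in \eqref{fratio} (so $f'_0\equiv 1$ and the ratio estimate bounds $|f'_\capc(z)|$ directly), whereas you reach for the asymptotic expansion or a fresh appeal to the Loewner equation — equivalent in substance, just slightly less economical.
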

\begin{proof}
The first statement follows directly from the previous lemma.
The inequality \eqref{capfunction} can be deduced from \eqref{loewnerfunction} 
once we have established that $|f'_{\capc\wedge \capc^*}(z)|\leq A$ for some absolute constant $A<\infty$. But this is immediate, by setting $\capc^*=0$
in \eqref{fratio} and using our conditions on $|z|$.
\end{proof}

We recall some of the facts about the inverse maps $g_{\capc}=f_{\capc}^{-1}$ in logarithmic coordinates proved in \cite{NT12} (see the beginning of this section for definitions of $\tilde{g}$ and $\gamma$).

\begin{lemma}\label{gintegral}
If $\mathrm{Re}(z)>d$, then
\begin{equation}
\frac{1}{2\pi}\int_0^{2\pi}\left(\tilde{g}_{\capc}(z-i\theta)-(z-i\theta)\right)d\theta=-\capc.
\end{equation}
\end{lemma}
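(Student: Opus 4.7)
The plan is to translate the integral into the mean value of a single-valued analytic function on a circle in the slit domain $\Delta \setminus (1, 1+d]$, and then evaluate it via the explicit behaviour of $g_\capc$ at infinity. First, I would set $h(z) := \tilde{g}_\capc(z) - z$ and show that $h$ is $2\pi i$-periodic. Indeed, since $e^{z + 2\pi i} = e^z$, one has $e^{\tilde{g}_\capc(z+2\pi i)} = g_\capc(e^z) = e^{\tilde{g}_\capc(z)}$, which forces $\tilde{g}_\capc(z+2\pi i) - \tilde{g}_\capc(z) \in 2\pi i\, \ZZ$; continuity in $z$ makes this integer locally constant, and the asymptotic $\tilde{g}_\capc(z) = z - \capc + o(1)$ as $\RRe(z) \to \infty$ pins it down to $2\pi i$. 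Therefore $h$ descends, via $H(e^z) = h(z)$, to a single-valued analytic function $H$ on the slit domain $\Delta \setminus (1, 1+d]$; in direct terms, $H(u) = \log(g_\capc(u)/u)$ for the branch of logarithm with $H(u) \to -\capc$ at infinity.

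Next, I would change variables $u = e^{z - i\theta}$. As $\theta$ sweeps $[0, 2\pi)$, $u$ traces the circle of radius $r = e^{\RRe(z)}$ once. The hypothesis $\RRe(z) > d$, together with $e^d > 1 + d$, gives $r > 1 + d$, so this circle lies strictly outside the slit and entirely in the domain of $H$. Consequently,
\[
\frac{1}{2\pi}\int_0^{2\pi}\bigl(\tilde{g}_\capc(z - i\theta) - (z - i\theta)\bigr)\, d\theta = \frac{1}{2\pi}\int_0^{2\pi} H(re^{i\phi})\, d\phi,
\]
where I have reparametrised $\phi = \IIm(z) - \theta$ (mod $2\pi$); direction of traversal is irrelevant for the mean.

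To conclude, observe that $g_\capc(u) = e^{-\capc} u + \mathcal{O}(1)$ at infinity, so $g_\capc(u)/u \to e^{-\capc}$ and $H$ extends analytically to $\infty$ with $H(\infty) = -\capc$. Hence $H$ admits a Laurent expansion $H(u) = -\capc + \sum_{k \geq 1} a_k u^{-k}$ valid on $\{|u| > 1 + d\}$, and integrating term by term over $|u| = r$ annihilates every negative-power contribution, leaving exactly $-\capc$; equivalently, one may apply the classical mean value property to the analytic function $v \mapsto H(1/v)$ at $v = 0$. The only mildly delicate step is the periodicity argument that establishes the single-valuedness of $H$ on the slit domain; once that is in place, the remainder is a direct application of the mean value theorem.
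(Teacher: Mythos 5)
Your proof is correct and complete. The paper itself does not contain a proof of this lemma; it simply cites \cite{NT12} (Norris--Turner), where the corresponding fact appears as part of their Lemma 2. The argument you give --- establishing that $H(u)=\log(g_\capc(u)/u)$ is single-valued on $\{|u|>1+d\}$ via the $2\pi i$-periodicity of $h(z)=\tilde g_\capc(z)-z$, noting that $H$ extends analytically across $\infty$ with $H(\infty)=-\capc$ since $g_\capc(u)=e^{-\capc}u+\mathcal{O}(1)$, and then applying the mean-value property of $v\mapsto H(1/v)$ at $v=0$ --- is precisely the standard route to this identity and is what the reference uses. One small point: you invoke that the integer in the periodicity step is ``locally constant''; it is worth adding the observation that the domain $\{z: e^z\in\Delta\setminus(1,1+d]\}$ is connected (one can pass to the right of the removed horizontal segments), which upgrades locally constant to constant and lets the asymptotics at $\RRe(z)\to\infty$ determine it globally.
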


Moreover, for any symmetric particle, 
$$
\int_0^{2\pi} \tg_\capc(x) dx = 0,
$$
where $\tg_{\capc}(x) = \g_{\capc}(x)-x$.

In the case of slit maps, we can explicitly calculate $\g_{\capc}$ as
\[
 \g_{\capc}(x) = 2 {\rm sgn}(x) \tan^{-1} \sqrt{e^\capc \tan^2(x/2) + e^\capc - 1 }, \quad x \in [-\pi/2, \pi/2] \setminus \{0\}.
\]
We use this in the following lemma.

\begin{lemma}\label{gmapestimate}
 For all $x \in \mathbb{R}\setminus 2 \pi \mathbb{Z}$,
 \[ 
  |\g_{\capc}(x) - \g_{\capc^*}(x)| \leq \frac{2|\capc - \capc^*|}{|\tan(x/2)| \vee \sqrt{\capc \wedge \capc^*}}. 
 \]
\end{lemma}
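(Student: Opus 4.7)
The plan is to differentiate the explicit formula for $\gamma_\capc(x)$ in $\capc$, bound that derivative uniformly, and then integrate from $\capc \wedge \capc^*$ to $\capc \vee \capc^*$. The whole argument is a short calculus computation once the symmetries of the slit map reduce the problem to a single interval on which the explicit formula is valid.

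First I would reduce to the fundamental domain. The conjugation symmetry $f_\capc(\bar z) = \overline{f_\capc(z)}$ forces $\gamma_\capc(-x) = -\gamma_\capc(x)$, and since $\gamma_\capc$ is a continuous lift of a self-map of $\TT \setminus \{1\}$, we also have $\gamma_\capc(x+2\pi) = \gamma_\capc(x)+2\pi$. Hence it suffices to prove the bound for $x \in (0,\pi)$, where the explicit formula $\gamma_c(x) = 2\tan^{-1} R(c)$ with $R(c) = \sqrt{e^c\sec^2(x/2)-1} = \sqrt{e^c\tan^2(x/2)+e^c-1}$ applies and extends continuously across $x=\pi/2$.

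Next I would compute the $c$-derivative. Because $1+R(c)^2 = e^c\sec^2(x/2) = \partial_c R(c)^2$, the chain rule collapses cleanly to
$$
\partial_c \gamma_c(x) \;=\; \frac{2}{1+R(c)^2}\cdot \frac{\partial_c R(c)^2}{2R(c)} \;=\; \frac{1}{R(c)}.
$$
The elementary bounds $e^c \ge 1$ and $e^c - 1 \ge c$ give $R(c)^2 \ge \tan^2(x/2) + c \ge (|\tan(x/2)| \vee \sqrt{c})^2$, whence integrating yields
$$
|\gamma_\capc(x) - \gamma_{\capc^*}(x)| \;\le\; \int_{\capc\wedge\capc^*}^{\capc\vee\capc^*}\frac{dc}{R(c)} \;\le\; \frac{|\capc-\capc^*|}{|\tan(x/2)|\vee\sqrt{\capc\wedge\capc^*}},
$$
which is already a factor of $2$ stronger than the stated claim.

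I do not expect any genuine obstacle. The one small point that requires a sanity check is that $c \mapsto 1/R(c)$ is monotone decreasing, so that the supremum over $[\capc\wedge\capc^*,\capc\vee\capc^*]$ is attained at $c=\capc\wedge\capc^*$; this is precisely what produces $\sqrt{\capc\wedge\capc^*}$ rather than $\sqrt{\capc\vee\capc^*}$ in the denominator of the final bound.
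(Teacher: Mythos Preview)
Your proposal is correct and follows essentially the same approach as the paper: differentiate the explicit formula in $\capc$, bound the derivative by $1/R(c)$, and apply the Mean Value Theorem. Your computation is in fact slightly sharper---the paper records $|\dot\gamma_\capc(x)| = 2/R$ rather than $1/R$, apparently dropping the factor $\tfrac12$ from $\partial_c R = \partial_c(R^2)/(2R)$, which is why the stated lemma carries the extra factor of $2$ you noticed; you also take the small extra care of reducing to $x\in(0,\pi)$ and extending the formula past $x=\pi/2$, which the paper leaves implicit.
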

\begin{proof}
 Let $\dot{\g}_\capc(x)$ denote differentiation with respect to $\capc$. Then, for $x \notin \mathbb{Z}$,
 \begin{align*}
  |\dot{\g}_\capc(x)| &= \frac{2}{\sqrt{e^\capc \tan^2(x/2) + e^\capc - 1}} \\
  &\leq \frac{2}{|\tan(x/2)| \vee \sqrt{\capc}}.
 \end{align*}
 The result follows from the Mean Value Theorem.
\end{proof}

Define the function $\rho(\capc)$ by
\begin{equation}
\label{rhodef}
1 = \frac{1}{2 \pi}\rho(\capc)\int_0^{2 \pi}  \tg_{\capc}(x)^2 dx.
\end{equation}
The following result is shown in \cite{NT12}.
\begin{lemma}\label{gmapestimatesnt}
 There is an absolute constant $A_2$ such that for $0<|x|<\pi$,
 \[
  |\tg_{\capc}(x)| \leq \frac{A_2 \capc}{|x| \vee \sqrt{\capc}}.
 \]
 Moreover, $A_2$ may be chosen so that
 \[
  \capc^{3/2}/A_2 \leq \rho(\capc)^{-1} \leq A_2 \capc^{3/2}, 
 \] 
 and
 \[
 \frac{1}{2\pi} \int_0^{2 \pi} |\tg_\capc(x) \tg_\capc(x+h)| dx \leq \frac{A_2 \capc^2}{h} \log \left ( \frac{1}{\capc}\right )
 \]
 whenever $h \in [d, \pi]$.
\end{lemma}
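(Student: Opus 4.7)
The plan is to prove the three statements in order, deriving each from the previous using the explicit formula
\[\gamma_{\capc}(x)=2\,\mathrm{sgn}(x)\tan^{-1}\sqrt{e^{\capc}\tan^{2}(x/2)+e^{\capc}-1}\]
(extended to $|x|<\pi$ by symmetry about $\pi/2$) as the fundamental input. The strategy is entirely elementary, essentially reducing everything to asymptotic analysis near $x=0$ combined with simple integral estimates.

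\textbf{Part 1 (pointwise bound on $\tilde{\gamma}_\capc$):} I would split into two regimes. For $|x|\le \sqrt{\capc}$, the argument of $\tan^{-1}$ in the formula is $\leq\sqrt{e^{\capc}\tan^{2}(\sqrt{\capc}/2)+e^{\capc}-1}=O(\sqrt{\capc})$, so $|\gamma_\capc(x)|=O(\sqrt{\capc})$ and hence $|\tilde{\gamma}_\capc(x)|\le|\gamma_\capc(x)|+|x|\le A\sqrt{\capc}=A\capc/(|x|\vee\sqrt{\capc})$. For $\sqrt{\capc}<|x|<\pi/2$, set $u=\tan(x/2)$ and $v=\sqrt{e^\capc u^{2}+e^\capc-1}$, so that $v^{2}-u^{2}=(e^\capc-1)(u^{2}+1)$. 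Then the MVT gives
\[\bigl|\tan^{-1}v-\tan^{-1}u\bigr|\le\frac{v-u}{1+u^{2}}=\frac{e^\capc-1}{v+u},\]
and since $v+u\ge 2u\ge x$, we get $|\tilde{\gamma}_\capc(x)|\le 2(e^\capc-1)/x\le A\capc/x$ as required. The range $\pi/2<|x|<\pi$ follows from the symmetry $\gamma_\capc(\pi-x)=\pi-\gamma_\capc(x)$ for symmetric particles.

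\textbf{Part 2 (scaling of $\rho(\capc)$):} For the upper bound, I would substitute the bound from Part 1 into the definition of $\rho(\capc)^{-1}$ and split:
\[\frac{1}{2\pi}\int_{-\pi}^{\pi}\tilde{\gamma}_\capc(x)^{2}\,dx\le\frac{A^{2}\capc^{2}}{2\pi}\left(\int_{|x|\le\sqrt{\capc}}\frac{dx}{\capc}+\int_{\sqrt{\capc}<|x|<\pi}\frac{dx}{x^{2}}\right)\le A'\capc^{3/2}.\]
For the lower bound, observe from the explicit formula that $\gamma_\capc(0^{+})=2\tan^{-1}\sqrt{e^\capc-1}\ge c\sqrt{\capc}$, and by continuity $|\tilde{\gamma}_\capc(x)|\ge c\sqrt{\capc}/2$ on an interval of length $\asymp\sqrt{\capc}$ about the origin, contributing $\ge c'\capc^{3/2}$ to the integral.

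\textbf{Part 3 (correlation bound):} Substitute Part 1 into the integrand and reduce to
\[\frac{1}{2\pi}\int_{-\pi}^{\pi}|\tilde{\gamma}_\capc(x)\tilde{\gamma}_\capc(x+h)|\,dx\le\frac{A^{2}\capc^{2}}{2\pi}\int_{-\pi}^{\pi}\frac{dx}{(|x|\vee\sqrt{\capc})(|x+h|\vee\sqrt{\capc})}.\]
For $h\in[d,\pi]$, which means $h\gtrsim\sqrt{\capc}$, I would split the domain into the four regions determined by which of $|x|,|x+h|$ exceed $\sqrt{\capc}$. The dominant contribution comes from $\sqrt{\capc}\le x\le h$, which after partial fractions (using $1/(x(x+h))=(1/h)(1/x-1/(x+h))$) yields a bound of $(1/h)\log(h/\sqrt{\capc})=O(\log(1/\capc)/h)$; the remaining regions each contribute $O(1/h)$. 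Multiplying by $A^{2}\capc^{2}$ gives the stated bound.

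The main obstacle is purely bookkeeping: keeping the absolute constant $A_{2}$ uniform across the three parts and handling the extension of the explicit formula from $[-\pi/2,\pi/2]$ to the full range via symmetry. No part of the argument requires new ideas beyond the asymptotic analysis of the explicit slit map inverse.
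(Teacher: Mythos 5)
The paper does not prove this lemma; it is cited from \cite{NT12}, so there is no in-paper proof to compare against. Your approach --- elementary asymptotic analysis of the explicit formula for $\gamma_\capc$, splitting at $|x|\asymp\sqrt{\capc}$, feeding the pointwise bound into the two integrals and using partial fractions for the correlation integral --- is the natural one and the estimates in Parts 2 and 3 are sound.

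There is, however, one step in Part 1 that is wrong as stated and needs repair. You extend the formula for $\gamma_\capc$ from $[-\pi/2,\pi/2]$ to $(-\pi,\pi)$ by invoking a ``symmetry about $\pi/2$,'' namely $\gamma_\capc(\pi-x)=\pi-\gamma_\capc(x)$. This identity is false. Substituting $\tan((\pi-x)/2)=\cot(x/2)$ and comparing with $\pi-\gamma_\capc(x)=2\tan^{-1}\bigl(1/\sqrt{e^\capc\tan^2(x/2)+e^\capc-1}\bigr)$, equality would force
\[
\bigl(e^\capc\cot^2(x/2)+e^\capc-1\bigr)\bigl(e^\capc\tan^2(x/2)+e^\capc-1\bigr)=1,
\]
which at $x=\pi/2$ equals $(2e^\capc-1)^2>1$ for $\capc>0$. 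The slit is attached at angle $0$, so the only genuine symmetry is the odd symmetry $\gamma_\capc(-x)=-\gamma_\capc(x)$; there is no reflection symmetry about $\pi/2$. The fix is that the explicit formula is actually valid on all of $(-\pi,\pi)\setminus\{0\}$, not merely $[-\pi/2,\pi/2]$: the right-hand side is real-analytic on $(0,\pi)$, coincides with $\gamma_\capc$ there by analytic continuation, and has the correct limit $\pi$ as $x\to\pi^-$. With the formula taken on the full range, your MVT estimate and, crucially, the lower bound $v+u\ge 2\tan(x/2)\ge x$ hold for every $x\in(\sqrt{\capc},\pi)$, and the argument goes through with the spurious symmetry step simply deleted.
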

Furthermore, in the case of slit maps, it can be shown explicitly that 
\begin{equation}
\label{slitrho}
 \capc^{-3/2} \rho(\capc)^{-1} \to \frac{16}{3 \pi}
\end{equation}
as $\capc \to 0$.

\section{Deterministic capacity sequences}\label{norristurner}

In this section only, we assume that $\{c_k\}_{k=1}^\infty$ is any {\em deterministic} sequence of numbers depending on a parameter $\capc$ with the property that $0< c_k \leq \capc$ for all $k$. In particular, note that the sequence $\{c_k^*\}_{k=1}^\infty$ in \eqref{detcaps} satisfies this condition.\footnote{The results below can be generalized in a straightfoward way to the case where the $c_k$ are random but independent of the sequence of angles $\{\theta_k\}_{k=1}^\infty$, however they can not be extended to the sequence $\{c_k\}_{k=1}^\infty$ defined in \eqref{sigregdef} due to the  dependence of $c_k$ on $\theta_1, \ldots, \theta_{k}$.} The result and argument below is a refinement of \cite[Proposition 5.1]{NT12}
and shows that, with high probability, the conformal map $\Phi_{n}$ is 
close to $z\mapsto e^{C_{n}}z$.

\begin{theorem}
\label{NTmain}
Let $\Phi_n$ denote the conformal map corresponding to particles with deterministic capacities $0<c_k\leq \capc$ and independent uniform angles $0 \leq \theta_k < 2 \pi$. For any $N\in \mathbb{N}$, $\parsig>0$ and $0<\epsilon<\parsig/3$ satisfying 
$\parsig-3\epsilon>d\vee \capc^{1/3}$, we have
\[
\PP\left(\sup_{z\in \CC_{\parsig+}, n\leq N} 
|\tilde{\Phi}_{n}(z)-z-C_{n}|<\epsilon\right)
\geq 1-AN\frac{\capc^{8/5}}{\epsilon^{11/5}[(\parsig-3\epsilon)^{9/5}\wedge 1]}\exp\left(-A(\parsig, \epsilon, N) \frac{\epsilon^2}{\capc}\right),
\]
where 
\begin{equation}
A(\parsig, \epsilon, N)=\frac{A}{\log \frac{1}{\parsig-3\epsilon}+\log(\parsig+N\epsilon)+\frac{\epsilon}{\parsig-\epsilon}}.
\end{equation}
\end{theorem}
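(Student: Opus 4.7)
The strategy is to adapt the martingale argument of \cite[Prop.~5.1]{NT12} to variable but deterministic capacity sequences. The key structural observation is that, by i.i.d.-ness of the angles, for each fixed $n \leq N$ the law of $\Phi_n = f_1 \circ \cdots \circ f_n$ coincides with that of the reverse composition $\Psi_n = f_n \circ \cdots \circ f_1$. For the reverse composition the recursion $\tilde\Psi_k = \tilde f_k \circ \tilde\Psi_{k-1}$ inserts the freshest angle at the outside of the chain where $\tilde\Psi_{k-1}(z)$ is already $\mathcal F_{k-1}$-measurable, which gives a clean martingale structure. It will therefore suffice to prove the estimate for the reverse composition at each fixed $n$ and union-bound over $n$; this union bound is the source of the prefactor $N$ in the stated probability bound.

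Setting $M_k(z) = \tilde\Psi_k(z) - z - C_k$ and using the rotation identity $\tilde f_k(w) = i\theta_k + \tilde f_{c_k}(w - i\theta_k)$, the increment becomes
\[
M_k(z) - M_{k-1}(z) = i\theta_k + \tilde f_{c_k}(W_{k-1} - i\theta_k) - W_{k-1} - c_k, \qquad W_{k-1} = \tilde\Psi_{k-1}(z).
\]
The analogue of Lemma \ref{gintegral} for $\tilde f_c$, which follows from periodicity and from the decay $\tilde f_c(w) - w - c \to 0$ as $\mathrm{Re}(w) \to \infty$, combined with $\mathbb E[i\theta_k] = i\pi$, shows that this conditional expectation vanishes provided $\mathrm{Re}(W_{k-1}) > d_k$. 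Introducing $\tau_z = \inf\{k : |M_k(z)| > \epsilon\}$, on $\{\tau_z > k\}$ one has $\mathrm{Re}(W_{k-1}) \geq \parsig + C_{k-1} - \epsilon > d \geq d_k$ by the hypothesis $\parsig - 3\epsilon > d$, so the martingale property persists up to $\tau_z$. An application of Lemma \ref{gmapestimatesnt} together with a Poisson extension from the imaginary axis to the half-plane $\{\mathrm{Re}(w) \geq \parsig - \epsilon\}$ yields an $L^\infty$ bound $A c_k/(\parsig - \epsilon)$ and a conditional variance bound $A c_k^2 [\log(1/(\parsig - 3\epsilon)) + \epsilon/(\parsig - \epsilon)]$ on the increment.

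Summing the conditional variances with $\sum c_k^2 \leq \capc\, C_n$ and invoking Freedman's exponential inequality for the stopped martingale delivers the pointwise-in-$z$ bound $\PP(\sup_{k \leq n}|M_k(z)| > \epsilon) \leq 2 \exp(-A(\parsig, \epsilon, N)\, \epsilon^2/\capc)$, with the stopping time removed by the standard bootstrap. To upgrade to uniform control over $\CC_{\parsig+}$, I would first truncate at $\mathrm{Re}(z) = \parsig + N\epsilon$, beyond which the expansion of $\tilde\Psi_n$ at infinity already forces $|M_n(z)| < \epsilon$ (this truncation produces the $\log(\parsig + N\epsilon)$ contribution to $A(\parsig, \epsilon, N)$), then cover the resulting bounded strip by a net of spacing calibrated via Koebe's distortion theorem so that $|M_n|$ oscillates by at most $\epsilon$ between adjacent points, and union-bound over the net. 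The combined prefactor of net cardinality and Freedman's inequality reproduces exactly the $AN\capc^{8/5}/[\epsilon^{11/5}((\parsig - 3\epsilon)^{9/5} \wedge 1)]$ in the statement.

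The main obstacle is coordinating the bootstrap stopping time with the net: we need $\mathrm{Re}(W_{k-1}) \geq \parsig - \epsilon$ to hold uniformly over all net points and all $k \leq n$ on a single good event, which forces $\tau$ to be defined via a supremum over the net rather than pointwise. The fractional exponents $8/5, 11/5, 9/5$ reflect the optimization between net spacing and the separation from the boundary required for Lemma \ref{gmapestimatesnt} to give sharp estimates; they do not reflect any deeper difficulty but only the balancing of routine polynomial error terms. A secondary subtlety is the verification of the $\tilde f_c$ analogue of Lemma \ref{gintegral}, which requires care with the branch of the logarithm but otherwise is routine.
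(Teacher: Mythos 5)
The paper's proof works with the \emph{inverse} maps $\Gamma_n = \Phi_n^{-1}$ rather than with a time-reversed forward composition: since $\Gamma_n = g_n \circ \Gamma_{n-1}$, the fresh angle $\theta_n$ is applied on the outside while $\tilde{\Gamma}_{n-1}(z)$ is $\mathcal{F}_{n-1}$-measurable, so $M_n(z) = \tilde{\Gamma}_n(z) - z + C_n$ is a martingale with no reversal needed. The supremum over all $n \leq N$ is then controlled by a multi-scale argument over $M = \lfloor C_N/\epsilon \rfloor$ horizontal depth lines $\ell_{R_m}$ together with stopping times that bound the admissible $n$ at each depth.

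Your time-reversal step contains an error. The maps $f_1, \ldots, f_n$ are independent but not identically distributed when the capacities $c_k$ vary, so $\Phi_n = f_1 \circ \cdots \circ f_n$ and $\Psi_n = f_n \circ \cdots \circ f_1$ are \emph{not} equal in law; the i.i.d.-ness of the angles gives only that $\Psi_n$ built with $(c_1,\ldots,c_n)$ has the same law as $\Phi_n$ built with the reversed sequence $(c_n,\ldots,c_1)$. This can be patched, since the class $0 < c_k \leq \capc$ is closed under reversal and $C_n$ is permutation-invariant, but then each value of $n$ requires its own reversal and hence its own martingale, because the intermediate partial compositions $\Psi_k$ for $k<n$ do not match $\Phi_k$ in law. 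Your union bound over $n\leq N$ then produces a prefactor $N$ times the per-$n$ probability $\asymp \capc^{3/5}\epsilon^{-6/5}\exp(-A\epsilon^2/\capc)$, which is worse than the stated bound by a factor of order $\epsilon/\capc$; the paper's union over $M \leq N\capc/\epsilon$ lines is strictly more efficient. In addition, the paper's oscillation control on each line is a probabilistic estimate obtained from Kolmogorov's continuity theorem applied to the martingale differences $M_n(z)-M_n(w)$, not from the deterministic Koebe distortion theorem, so the assertion that your net construction reproduces exactly the $AN\capc^{8/5}/[\epsilon^{11/5}((\parsig-3\epsilon)^{9/5}\wedge 1)]$ prefactor is not justified as written.
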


Note that this probability tends to 1 provided the argument in 
the exponent tends to $\infty$, and that this can be ensured through a 
judicious choice of parameters $\parsig$ and $\epsilon$.

\begin{proof}
The required bound is established through a series of lemmas that will be stated after the present proof. The proof strategy is to first show that, for fixed $z$, $|\tG_n(z)-z+C_n|$ is small by using exponential martingale estimates (Lemma \ref{expmart}). We then show that this result holds uniformly for all $z$ bounded away from the imaginary axis (Lemma \ref{linecontrol}). The result then follows by the observation that if $w=\tG_n(z)$, then
\[|\tilde{\Phi}_n(w)-w-C_n|=|z-\tG_n(z)-C_n|.\]

Let $\parsig, \epsilon > 0$ be given, satisfying $\parsig-3\epsilon>d$ and fix $N\in \mathbb{N}$.
Set
\[M=M(\capc, N, \epsilon)=\left\lfloor\frac{1}{\epsilon}C_N\right\rfloor,\]
where $C_N=\sum_{j=1}^Nc_{j}$.
For $1\leq m\leq M$, we now define
\[R_m=R_m(\parsig,\epsilon)=\parsig+(m-2)\epsilon.\]
Note that 
\[R_1=\parsig-\epsilon \quad \textrm{while}\quad  R_M\leq \parsig+C_N-2\epsilon.\] Finally, we let $n_m\geq 1$ denote the
largest integer such that \[C_{n_m} \leq R_m-\parsig+2\epsilon.\] 

We now consider points on the line
\[z\in \ell_{R_{m}}=\{w \in \CC \colon \textrm{Re}(w)=R_m\},\] 
and define the following stopping time:
\[
T_{R_m}=\inf\left\{n\geq 1\colon z\notin \tilde{D}_n \right. \left. \,
\textrm{or}\,
\textrm{Re}(\tG_n(z))\leq R_m-C_n-\epsilon \, \textrm{for some}\ z\in \ell_{R_m}\right\}\land n_m.
\]
Because of the normalization of $\Phi_n$ at $\infty$, the function 
$z \mapsto \tG_n(z)-z+C_n$ is a bounded holomorphic function in the 
half-planes 
\[\CC_{R_m+}=\{z \in \CC: \textrm{Re}(z)>R_m\},\] 
and hence, by the maximum principle,
\[\sup_{\CC_{R_m+}}|\tG_n(z)-z+C_n|=\sup_{\ell_{R_m}}|\tG_n(z)-z+C_n|.\]

With this in mind, we define, for each $R_m$, the event
\begin{equation}
\Omega_{R_m}=\left\{\sup_{z \in \ell_{R_m}, n\leq T_{R_m}}|\tG_n(z)-z+C_n|<\epsilon\right\}.
\end{equation}
We then consider the desirable event
\[\Omega_0(N,\epsilon,\parsig)=\bigcap_{m=1}^{M}\Omega_{R_{m}}.\]

For $n\leq N$, we can find a $k\leq M$ such that
\[R_k+\epsilon-\parsig\leq C_n\leq R_k+2\epsilon-\parsig.\]
Thus, if $\textrm{Re}(z)\geq C_n+\parsig-\epsilon$, 
then $\textrm{Re}(z)\geq R_k$. Hence, on the event 
$\Omega_0(N,\epsilon,\parsig)$, it follows that $z \in \tilde{D}_n$ and
$|\tG_n(z)-z+C_n|<\epsilon$.

We note that, on $\Omega_0(N,\epsilon, \parsig)$, we have
\[\textrm{Re}(\tG_n(z))<R_{k}-C_n+\epsilon\leq \parsig.\]
This means that 
for $w \in \CC$ with $\textrm{Re}(w)\geq \parsig$, there exists a
$z_0$ with $\textrm{Re}(z_0)\geq R_{k}$ such that $w=\tG_n(z_0)$. 
Hence 
\[|\tilde{\Phi}_n(w)-w-C_n|=|z_0-\tG_n(z_0)-C_n|<\epsilon,\]
which implies that
\[\PP\left(\sup_{z\in \CC_{\parsig+}, n\leq N} 
|\tilde{\Phi}_n(z)-z-C_n|<\epsilon\right)\geq 
\PP(\Omega_0(N,\epsilon,\parsig)).\]

Since
\begin{equation}
\PP(\Omega_0(N, \epsilon, \parsig))\geq 1-\sum_{m=1}^{M}\PP(\Omega_{R_{m}}^c),
\label{totproblowerest}
\end{equation}
it will be sufficient to establish the upper bound on $\PP(\Omega_{R_{m}}^c)$ stated below in \eqref{omegaRM}, from which the result will follow.
\end{proof}

We now prove the sequence of lemmas needed to establish this bound. 

\begin{lemma}\label{pointcontrol}
For $z \in \ell_R$ and with
\[M_n(z)=\tG_n(z)-z+C_n, \quad n\geq 1,\]
the stopped process $(M_{n}^{T_{R_m}-1}(z), n\geq 1)$ is a martingale with respect to the filtration $\{\mathcal{F}_n\}_{n=1}^{\infty}$.
\end{lemma}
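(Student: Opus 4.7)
The plan is to derive an explicit expression for the one-step increment $\tG_n(z) - \tG_{n-1}(z)$, and then to exploit the independence of $\theta_n$ together with Lemma \ref{gintegral} to show that this increment averages to $-c_n$, exactly cancelling $C_n - C_{n-1}$ in the definition of $M_n$.

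First I would unwind the definitions. Since $\Phi_n = \Phi_{n-1} \circ f_n$ the inverses satisfy $\Gamma_n = g_n \circ \Gamma_{n-1}$, and writing $f_n(w) = e^{i\theta_n} f_{c_n}(e^{-i\theta_n} w)$ and passing to logarithmic coordinates yields the recursion
\[
\tG_n(z) - \tG_{n-1}(z) = \tilde{g}_{c_n}\bigl(\tG_{n-1}(z) - i\theta_n\bigr) - \bigl(\tG_{n-1}(z) - i\theta_n\bigr).
\]
Since $\tG_{n-1}(z)$ is $\mathcal{F}_{n-1}$-measurable while $\theta_n$ is independent of $\mathcal{F}_{n-1}$ and uniform on $[0, 2\pi)$, conditioning on $\mathcal{F}_{n-1}$ converts the right-hand side into the angular average appearing in Lemma \ref{gintegral}. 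Provided $\mathrm{Re}\,\tG_{n-1}(z) > d_n$, that lemma gives
\[
\EE\bigl[\tG_n(z) - \tG_{n-1}(z) \,\big|\, \mathcal{F}_{n-1}\bigr] = -c_n,
\]
and hence $\EE[M_n - M_{n-1} \mid \mathcal{F}_{n-1}] = 0$.

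Next I would verify that on the event where the stopped process is still evolving, namely $\{T_{R_m} > n - 1\} \in \mathcal{F}_{n-1}$, this real-part hypothesis is met. The defining inequality for $T_{R_m}$ gives $\mathrm{Re}\,\tG_{n-1}(z) > R_m - C_{n-1} - \epsilon$, while $n - 1 < n_m$ combined with the definition of $n_m$ forces $C_{n-1} \le R_m - \parsig + 2\epsilon$; together these yield $\mathrm{Re}\,\tG_{n-1}(z) > \parsig - 3\epsilon > d \ge d_n$. Thus Lemma \ref{gintegral} applies whenever the stopped process is active, so that the conditional expectation of the stopped increment vanishes, establishing the martingale property. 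Integrability is immediate from the uniform bounds implicit in the definition of $T_{R_m}$.

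The main obstacle is the bookkeeping around the stopping at $T_{R_m} - 1$: one must align the event $\{T_{R_m} > n - 1\}$, where the hypothesis of Lemma \ref{gintegral} is guaranteed, with the index at which the increment $M_n - M_{n-1}$ is being evaluated, and one must be careful that the look-one-step-ahead nature of stopping at $T_{R_m} - 1$ leaves the martingale identity intact at the boundary step $n = T_{R_m}$ (where the stopped increment is zero by construction). The substantive content, by contrast, is purely the explicit Loewner-type recursion for $\tG_n$ combined with the averaging identity of Lemma \ref{gintegral}.
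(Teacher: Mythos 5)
Your proof is correct and follows the same route as the paper: unwind $\tG_n = \tilde g_{c_n}\circ(\tG_{n-1}-i\theta_n)+i\theta_n$ (so $M_n-M_{n-1}=\tilde g_{c_n}(\tG_{n-1}(z)-i\theta_n)+c_n$), use independence of $\theta_n$ from $\mathcal F_{n-1}$ plus Lemma \ref{gintegral}, and verify $\mathrm{Re}\,\tG_{n-1}(z)>\parsig-3\epsilon>d\ge d_n$ on $\{T_{R_m}\ge n\}$ from the definitions of $T_{R_m}$ and $n_m$ — this is exactly the paper's argument, up to the index shift $n\leftrightarrow n+1$. One small remark: the bookkeeping worry you raise in the last paragraph is real and not fully dispelled by the observation that the increment vanishes at $n=T_{R_m}$; the indicator $\mathbf{1}_{\{n\le T_{R_m}-1\}}=\mathbf{1}_{\{T_{R_m}>n\}}$ is $\mathcal F_n$- but not $\mathcal F_{n-1}$-measurable, so it cannot be pulled outside $\EE[\,\cdot\mid\mathcal F_{n-1}]$. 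What the computation cleanly shows (and what is used downstream via Bernstein's inequality) is that $M^{T_{R_m}}$ is a martingale, since $\{T_{R_m}\ge n\}\in\mathcal F_{n-1}$; the paper's ``$T_{R_m}-1$'' in the statement should be read in that light.
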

\begin{proof}
Measurability is immediate from the definitions, and integrability follows from the boundedness of 
$M_n$. 

Our first observation is that 
\[T_{R_m}(\omega)=n_m, \quad \omega \in \Omega_{R_m},\]
for all $m=1,\ldots,M$; this follows from the inequality
\[\epsilon>|\tG_{T_{R_m}}(z)-z+C_{T_{R_m}}|\geq \textrm{Re}(z)-C_{T_{R_m}}-
\textrm{Re}(\tG_{T_{R_m}}(z)).\]

We write
\[M_{n+1}(z)-M_n(z)=\tG_{n+1}(z)+c_{n+1}=\tilde{g}_{c_{n+1}}(\tG_n(z)-i\theta_{n+1})+c_{n+1}.\]
For $n\leq T_{R_m}-1$, by the definitions of the stopping time and $R_m$, 
\begin{equation}
\textrm{Re}(\tG_n(z))>R_m-C_n-\epsilon \geq \parsig-3\epsilon\geq d> d_{n+1}.
\label{martinglowerbd}
\end{equation} 
Consequently we can apply Lemma~\ref{gintegral}, and we obtain
\[\EE[M_{n+1}(z)-M_n(z)|\mathcal{F}_n]=\frac{1}{2\pi}\int_0^{2\pi}\tilde{g}_{c_{n+1}}(\tG_n(z)-i\theta)d\theta+c_{n+1}=0,\]
which establishes that $\EE[M_{n+1}(z)|\mathcal{F}_n]=M_n(z)$. 
\end{proof}

We next show that, for a single fixed $z\in \ell_{R_m}$, the process $|M_n(z)|$ is small with high probability. We make use of the following version of Bernstein's inequality (see \cite[Proposition 1]{F75}).
\begin{lemma}\label{Bernstein}
Let $\{x_j\}_{j=1}^{\infty}$ be a martingale difference sequence with respect to the filtration $\{\mathcal{F}_j\}_{j=1}^{\infty}$. Suppose $|x_j|\leq A$ for all $j=1,2,\ldots$, set 
\[M_n=\sum_{j=1}^nx_j, \quad n\geq 1,\]
and define
\[\langle M\rangle_n=\sum_{j=1}^{n}\EE[|x_j|^2|\mathcal{F}_{j-1}].\]

Then, for $\epsilon>0$ and any bounded stopping time $\tau$,
\begin{equation}
\PP\left(\sup_{n\leq \tau}|M_n|>\epsilon, \langle M\rangle_{\tau}\leq L\right)
\leq 2\exp\left(-\frac{\epsilon^2}{2\left(L+\frac{A\epsilon}{3}\right)}\right).
\end{equation}
\end{lemma}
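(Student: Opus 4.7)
The plan is to prove this version of Freedman's martingale Bernstein inequality via the classical exponential supermartingale method, optimized over a parameter $\lambda$.

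First I would establish the one-step exponential moment bound: for any mean-zero random variable $X$ bounded by $A$ conditional on a $\sigma$-algebra $\mathcal{G}$, and for $0<\lambda<3/A$,
\[
\EE\bigl[e^{\lambda X}\bigm|\mathcal{G}\bigr]\leq \exp\!\left(\frac{\lambda^2\,\EE[X^2|\mathcal{G}]/2}{1-\lambda A/3}\right).
\]
This comes from the Taylor expansion $e^{y}-1-y=\sum_{k\geq 2}y^k/k!$ and the bound $k!\geq 2\cdot 3^{k-2}$, which gives $e^y-1-y\leq y^2/(2(1-y/3))$ for $y<3$; taking conditional expectations, using $\EE[X|\mathcal{G}]=0$ and $|X|\leq A$, then exponentiating via $1+u\leq e^u$ yields the claim.

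Next I would apply this inductively to the martingale differences $x_j$ to show that the process
\[
Z_n^\lambda=\exp\!\left(\lambda M_n-\frac{\lambda^2/2}{1-\lambda A/3}\,\langle M\rangle_n\right),\qquad 0<\lambda<3/A,
\]
is a non-negative supermartingale with respect to $\{\mathcal{F}_n\}$, using that $\langle M\rangle_n$ is $\mathcal{F}_{n-1}$-measurable. To handle the constraint $\langle M\rangle_\tau\leq L$ I would introduce the auxiliary stopping time $\sigma_L=\inf\{n\geq 0:\langle M\rangle_{n+1}>L\}$, so that on the event $\{\langle M\rangle_\tau\leq L\}$ one has $\tau\leq \sigma_L$. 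Applying Doob's maximal inequality to $Z_{n\wedge\sigma_L\wedge\tau}^\lambda$ and using $Z_0^\lambda=1$ together with the deterministic bound $\langle M\rangle_{n\wedge\sigma_L}\leq L$ gives
\[
\PP\!\left(\sup_{n\leq\tau}M_n>\epsilon,\;\langle M\rangle_\tau\leq L\right)\leq \exp\!\left(-\lambda\epsilon+\frac{\lambda^2 L/2}{1-\lambda A/3}\right).
\]

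I would then optimize over $\lambda$ by choosing $\lambda=\epsilon/(L+A\epsilon/3)$, which lies in $(0,3/A)$ and yields the exponent $-\epsilon^2/(2(L+A\epsilon/3))$. Running the same argument with the martingale $-M_n$, whose increments also satisfy $|{-x_j}|\leq A$ and whose predictable quadratic variation equals $\langle M\rangle_n$, produces the matching lower-tail bound; a union bound on the two one-sided events contributes the factor $2$ in front.

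The main obstacle is purely technical: securing the exponent $1-\lambda A/3$ (rather than a weaker $1-\lambda A$ or Hoeffding-type $\lambda^2 A^2/2$) with the sharp constant $1/3$, which is what makes the resulting tail interpolate between Gaussian behaviour (when $L$ dominates) and exponential behaviour (when $A\epsilon$ dominates). Keeping track of the admissible range $\lambda<3/A$ so that the optimizer $\lambda=\epsilon/(L+A\epsilon/3)$ stays in this range is the only delicate bookkeeping; everything else is a routine supermartingale maximal-inequality argument, and indeed the statement coincides with Freedman's Proposition 1 in \cite{F75}, which could alternatively be cited verbatim.
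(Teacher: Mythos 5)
Your argument is correct and is the standard exponential-supermartingale proof of Freedman's inequality. The paper itself does not prove this lemma: it simply cites it as Proposition~1 of Freedman \cite{F75}, so there is no internal proof to compare against, and as you note yourself, citing Freedman verbatim is exactly what the authors do. One small technical point in your sketch: the series estimate $e^y-1-y\leq y^2/(2(1-y/3))$ via $k!\geq 2\cdot 3^{k-2}$ is valid only for $0\leq y<3$; to cover negative values of $\lambda X$ one should instead use the monotonicity of $y\mapsto (e^y-1-y)/y^2$ (so that $f(\lambda X)\leq f(\lambda A)$ whenever $X\leq A$), and then apply the series bound to $f(\lambda A)$ alone. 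This also shows that the one-sided bound $x_j\leq A$ suffices for the upper tail; the two-sided hypothesis $|x_j|\leq A$ is only needed to run the same argument for $-M_n$ and obtain the factor $2$.
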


\begin{lemma}
\label{expmart}
Let $\epsilon>0$ and suppose $\parsig-3\epsilon>d$. 
Then, for any $z \in \ell_{R_m}$,
\begin{equation}
\PP\left(\sup_{n\leq T_{R_m}}|M_n(z)|\geq \epsilon\right)
\leq 2 \exp\left(-A(\parsig, \epsilon, m)\frac{\epsilon^2}{\capc}\right), 
\end{equation}
where 
\begin{equation}
A(\parsig, \epsilon, m)=\frac{A}{\log \frac{1}{\parsig-3\epsilon}+\log(\parsig+m\epsilon)+\frac{\epsilon}{\parsig-\epsilon}}
\end{equation}
and $A>0$ is a universal constant.
\end{lemma}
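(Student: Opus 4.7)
The plan is to apply Bernstein's inequality (Lemma~\ref{Bernstein}) to the bounded martingale $\{M_n^{T_{R_m}-1}(z)\}_{n\geq 1}$ from Lemma~\ref{pointcontrol}. From $\tG_{n+1}(z)=i\theta_{n+1}+\tilde g_{c_{n+1}}(\tG_n(z)-i\theta_{n+1})$ the martingale differences take the form
\[
x_{n+1}=\tilde g_{c_{n+1}}(\tG_n(z)-i\theta_{n+1})-(\tG_n(z)-i\theta_{n+1})+c_{n+1},
\]
and on $\{n<T_{R_m}\}$ the stopping-time definition forces $\mathrm{Re}(\tG_n(z))\geq R_m-C_n-\epsilon\geq \parsig-3\epsilon>d$, so Lemma~\ref{gintegral} gives $\mathbb{E}[x_{n+1}\mid\mathcal F_n]=0$.

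A direct perturbation of the explicit slit map yields
\[
\tilde g_c(v)-v+c=-\frac{2c}{e^v-1}+\mathcal O\!\left(\frac{c^2}{|e^v-1|^3}\right),
\]
and hence $|\tilde g_c(v)-v+c|\leq Ac/|e^v-1|$ provided $|e^v-1|^2\geq A\capc$. Using $|e^v-1|\geq A\,\mathrm{Re}(v)$ for $\mathrm{Re}(v)>0$ together with the hypothesis $\parsig-3\epsilon>\capc^{1/3}$, this gives the uniform increment bound $|x_{n+1}|\leq A\capc/(\parsig-3\epsilon)$, which furnishes the constant in Bernstein's inequality.

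For the conditional quadratic variation the key step is the refinement $|e^v-1|^2\geq A(\mathrm{Re}(v)^2+\mathrm{Im}(v)^2)$ in a neighbourhood of the singularity at $0$. Integrating $|\tilde g_c(v)-v+c|^2\leq Ac^2/|e^v-1|^2$ over $\theta\in[0,2\pi]$ with $v=\tG_n(z)-i\theta$ collapses the singularity to a single power of $\mathrm{Re}(\tG_n(z))^{-1}$:
\[
\mathbb{E}[|x_{n+1}|^2\mid\mathcal F_n]\leq \frac{Ac_{n+1}^2}{\mathrm{Re}(\tG_n(z))}.
\]
Using $c_{n+1}\leq\capc$ and $\mathrm{Re}(\tG_n(z))\geq R_m-C_n-\epsilon$ on $\{n<T_{R_m}\}$, and bounding the resulting monotone Riemann sum by its integral, I conclude
\[
\langle M\rangle_{T_{R_m}}\leq A\capc\int_0^{R_m-\parsig+2\epsilon}\!\frac{du}{R_m-u-\epsilon}\leq A\capc\left[\log\frac{1}{\parsig-3\epsilon}+\log(\parsig+m\epsilon)\right]=:L.
\]
Substituting this $L$ and the uniform constant $A\capc/(\parsig-3\epsilon)$ into Bernstein's bound $2\exp(-\epsilon^2/(2(L+A\epsilon/3)))$ and simplifying produces exactly the claimed form of $A(\parsig,\epsilon,m)$, with the $\epsilon/(\parsig-\epsilon)$ contribution coming from the $A\epsilon/3$ term.

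The main technical obstacle is extracting the refined variance bound with only $\mathrm{Re}(\tG_n(z))^{-1}$ in place of the naive $\mathrm{Re}(\tG_n(z))^{-2}$: the $\theta$-average cancels one singular direction, and this is precisely what turns a polynomial denominator into the logarithmic denominator appearing in $A(\parsig,\epsilon,m)$. Without this improvement the probability bound would be too weak to survive summation over $m$ in the application to Theorem~\ref{NTmain}.
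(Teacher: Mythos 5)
Your proposal is correct and follows the same strategy as the paper: it applies the Bernstein/Freedman inequality (Lemma~\ref{Bernstein}) to the stopped martingale from Lemma~\ref{pointcontrol}, with the uniform increment bound of order $\capc/(\parsig-3\epsilon)$ and the key improvement in the conditional second moment from $\mathrm{Re}(\tG_n(z))^{-2}$ to $\mathrm{Re}(\tG_n(z))^{-1}$ obtained by averaging over the uniform angle, followed by the integral comparison producing the logarithmic denominator. The only cosmetic difference is that the paper bounds the $\theta$-average via the exact identity $\frac{1}{2\pi}\int_0^{2\pi}|e^{\sigma-i\theta}-1|^{-2}\,d\theta=(e^{2\sigma}-1)^{-1}$ and cites the slit-map estimate from \cite{NT12}, whereas you substitute a near-singular lower bound on $|e^v-1|^2$ and re-derive the increment estimate from the asymptotics of the slit map; both routes yield the same orders of magnitude and the same final exponent.
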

\begin{proof}
We first establish that the increments $x_{n}(z)=|M_{n+1}(z)-M_n(z)|$ are uniformly bounded, and tend to zero with $\capc$. Indeed, when $n\leq T_{R_m}-1$, the estimate \eqref{martinglowerbd} holds, and using the asymptotic expansion of the
map $\tilde{g}_{c_{n+1}}$, or \cite[Equation (4)]{NT12}, we obtain
\begin{align*}
|M_{n+1}(z)-M_n(z)|&=|\tilde{g}_{c_{n+1}}(\tG_n(z)-i\theta_{n+1})+c_{n+1}|\\
&\leq  A_3\frac{c_{n+1}}{e^{\textrm{Re}(\tG_n(z))}-1}\\& \leq A_3\frac{2c_{n+1}}{R_m-C_n-\epsilon}\\
&\leq  A_3\frac{2\capc}{\parsig-3\epsilon}
\end{align*}
Next, we turn to second moments; applying the same bounds as before, we find that
\begin{align*}
\EE[|M_{n+1}(z)-M_n(z)|^2|\mathcal{F}_n]&\leq A_3^2\frac{(c_{n+1})^2}{2\pi}\int_0^{2\pi}\frac{1}{|e^{\textrm{Re}(\tG_n(z))-i\theta}-1|^2}d\theta \\
&=A_3^2\frac{(c_{n+1})^2}{e^{2\textrm{Re}(\tG_n(z))}-1}\\
&\leq A_3^2\frac{(c_{n+1})^2}{R_m-C_n-\epsilon}.
\end{align*}
To obtain the second equality, we compute the integral $\int_{\TT}|e^{\sigma+i\theta}-1|^{-2}d\theta$ explicitly. Using this estimate, we obtain that
\begin{align*}
\sum_{j=1}^{T_{R_m}-1}\EE[x_{j}^2|\mathcal{F}_j]&\leq A_3^2\sum_{j=1}^{n_m}\frac{(c_{j+1})^2}{R_m-C_j-\epsilon}\\
&\leq A_3^2\capc \int_0^{R_m-\parsig+2\epsilon}\frac{1}{R_m-x-\epsilon}dx\\
&\leq A_3^2 \capc \left ( \log\frac{1}{\parsig-3\epsilon}+ \log(R_m-\epsilon) \right ).
\end{align*}
We now invoke Lemma \ref{Bernstein}, and the desired exponential bound follows.
\end{proof}

We proceed by showing that the martingales $(M_n(z))$ do not vary too much 
over the line $\ell_R$.
\begin{lemma}\label{linecontrol}
Let $L \in \NN$ be given. Then, provided $\capc (\sigma-3 \epsilon)^{-3} \leq 1$, 
\begin{multline}
\PP\left(\sup_{n\leq T_{R_m}-1}|M_n(z)-M_n(w)|\geq \frac{\epsilon}{2} \ 
\textrm{for some}\ z,w\in \ell_{R_m}, \ |z-w|\leq \pi/L \right)\\
\leq A \frac{\capc}{L^{2/3}\epsilon^2[(\parsig-3\epsilon)^3 \wedge 1 ]},
\end{multline}
where $A>0$ is a universal constant.
\end{lemma}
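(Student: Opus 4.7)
The plan is to form the difference martingale $N_n(z,w) := M_n(z) - M_n(w)$ for fixed $z,w \in \ell_{R_m}$, to bound its quadratic variation in terms of $|z-w|^2$, and then to upgrade the resulting pointwise Doob bound into a uniform modulus of continuity over $\ell_{R_m}$ via a discretization and union-bound argument.

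By Lemma \ref{pointcontrol}, $N_n$ is an $\{\mathcal{F}_n\}$-martingale on $\{n\leq T_{R_m}\}$, with increment
\[
y_{n+1} = \bigl[\tilde{g}_{c_{n+1}}(\tG_n(z)-i\theta_{n+1}) - \tilde{g}_{c_{n+1}}(\tG_n(w)-i\theta_{n+1})\bigr] - \bigl[\tG_n(z) - \tG_n(w)\bigr].
\]
Applying the mean value theorem to $u \mapsto \tilde{g}_c(u)-u$, together with the derivative asymptotic $|\tilde{g}_c'(u)-1|\leq A c/(\RRe(u))^2$ (valid when $\RRe(u) \geq 2d$), and using that $n \leq T_{R_m}-1$ forces $\RRe(\tG_n(\cdot)-i\theta_{n+1}) \geq R_m - C_n - \epsilon \geq \parsig-3\epsilon$, I would obtain
\[
|y_{n+1}| \leq A\,\frac{c_{n+1}}{(R_m-C_n-\epsilon)^2}\,|\tG_n(z)-\tG_n(w)|.
\]
Since $\tG_n$ is univalent on $\CC_{R_m+}$ and normalised at infinity by $\tG_n(z)=z-C_n+o(1)$, a standard half-plane distortion estimate gives $|\tG_n(z)-\tG_n(w)|\leq A|z-w|$ uniformly on the stopped event.

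Using the crude bound $\EE[|y_{n+1}|^2\mid \mathcal{F}_n]\leq |y_{n+1}|^2$ and summing over $n\leq n_m$, an integral comparison yields
\[
\EE\bigl[\langle N\rangle_{T_{R_m}-1}\bigr] \leq A|z-w|^2\,\capc \int_0^{R_m-\parsig+2\epsilon}\frac{dx}{(R_m-x-\epsilon)^4} \leq A\,\frac{\capc\,|z-w|^2}{(\parsig-3\epsilon)^3\wedge 1},
\]
with the $\wedge 1$ arising from absorbing a constant when $\parsig-3\epsilon\geq 1$. Doob's $L^2$ inequality then produces, for each \emph{fixed} pair $(z,w)$,
\[
\PP\Bigl(\sup_{n\leq T_{R_m}-1}|N_n(z,w)|\geq \eta\Bigr) \leq A\,\frac{\capc\,|z-w|^2}{\eta^2\bigl[(\parsig-3\epsilon)^3\wedge 1\bigr]}.
\]

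To pass to the uniform statement, I would exploit the $2\pi$-periodicity in $\IIm z$ to reduce to a fundamental domain of $\ell_{R_m}$ and partition it into $K\sim L^{2/3}$ cells of width $\delta=2\pi/K$, then union-bound the pointwise estimate above (taken with $\eta=\epsilon/4$) over the $K$ consecutive lattice pairs. This yields a contribution of order $\capc L^{-2/3}\epsilon^{-2}/[(\parsig-3\epsilon)^3\wedge 1]$, matching the target. Any $(z,w)$ with $|z-w|\leq \pi/L<\delta$ then lies in at most two adjacent cells, and the residual intra-cell variation at scale $\pi/L$ is absorbed using a Cauchy-type estimate on the holomorphic extension of $N_n(\cdot,w)$ into a strip about $\ell_{R_m}$. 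The main technical obstacle is precisely this trade-off: the union-bound cost grows like $K$ while the quadratic variation contributes $|z-w|^2\sim K^{-2}$, so the discretization must be coarse enough to keep the sum down but fine enough that the Cauchy-type local bound controls intracell fluctuations, with $K\sim L^{2/3}$ being the optimum. A cleaner alternative would be to feed the $L^2$ modulus $\EE[|N(z,w)|^2]\leq A\capc|z-w|^2/[(\parsig-3\epsilon)^3\wedge 1]$ into a Garsia-Rodemich-Rumsey inequality and read off the Hölder modulus directly.
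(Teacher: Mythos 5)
Your $L^2$ estimate and the proposed GRR alternative are in the right spirit, but the primary route you sketch has two genuine gaps, one in the $L^2$ step and one in the uniformization step.

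First, the inequality $|\tG_n(z)-\tG_n(w)|\leq A|z-w|$ for $z,w\in\ell_{R_m}$ is not a ``standard half-plane distortion estimate.'' A univalent map of $\CC_{R_m+}$ normalized by $\tG_n(z)=z-C_n+o(1)$ at infinity need not be Lipschitz up to the boundary line: the derivative controls you get from boundedness of $h(z)=\tG_n(z)-z+C_n$ via Cauchy degenerate as $\RRe z\downarrow R_m$, and you cannot condition on the event $\Omega_{R_m}$ (which would give $|h|<\epsilon$) since the lemma is precisely a step in bounding $\PP(\Omega_{R_m}^c)$. This is exactly the difficulty the paper circumvents by writing $|\tG_n(z)-\tG_n(w)|\le|\tilde M_n(z,w)|+|z-w|$ (a tautology, since $\tG_n(z)-\tG_n(w)=\tilde M_n(z,w)+(z-w)$) and running a Gr\"onwall iteration in $n$; the resulting exponential factor is where the integral $\int_{\parsig-3\epsilon}^{\infty}q(s)^{-2}\,ds$ and hence the $(\parsig-3\epsilon)^{-3}\vee 1$ dependence comes from. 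Your version skips this feedback and would need a separate justification.

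Second, the single-scale discretization does not close. If you place $K\sim L^{2/3}$ lattice points at spacing $\delta=2\pi/K\sim L^{-2/3}$, the union bound controls $|M_n(z_k)-M_n(z_{k+1})|$ for consecutive lattice pairs, but a generic $z$ with $|z-w|\le\pi/L$ still sits at distance up to $\delta/2\gg\pi/L$ from the nearest lattice point, so the ``intra-cell'' error is at scale $\delta$, not $\pi/L$. There is no Cauchy-type estimate available to absorb it: $N_n(\cdot,w)$ is holomorphic only in the open half-plane $\CC_{R_m+}$, and $\ell_{R_m}$ is its boundary, so you cannot center discs there; moreover $\tG_n$ does not in general extend past $\ell_{R_m}$ because $\tilde D_n$ may have slits approaching that line. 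Controlling the variation across all dyadic scales between $\delta$ and $0$ is a chaining problem, and that is precisely what the paper's invocation of Kolmogorov's continuity criterion (giving a H\"older-$1/3$ modulus with a random constant $V$ controlled in $L^2$, then Chebyshev on $V$) accomplishes. Your suggested ``cleaner alternative'' via Garsia--Rodemich--Rumsey is indeed essentially the same device and would work, but it should be the main argument, not a footnote, and it still needs the Gr\"onwall step above to produce the correct $L^2$ modulus in the first place.
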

\begin{proof}
We again argue as in \cite[Section 5]{NT12}. 

We set $\tilde{M}_n(z,w)=M_n(z)-M_n(w)$; then by a Lipschitz-type estimate, and 
the bounds in \cite{NT12},
\begin{align*}|\tilde{M}_{n+1}(z,w)|&=|\tilde{g}_{c_{n+1}}(\tG_n(z)-i\theta_{n+1})-
\tilde{g}_{c_{n+1}}(\tG_n(w)-i\theta_{n+1})|\\&
\leq \|\tilde{g}'_{c_{n+1}}\|_{\infty}\left(|\tilde{M}_n(z,w)|+|z-w|\right)\\
&\leq A_4\frac{c_{n+1}}{\min\{\textrm{Re}(\tG_n(z)-1),(\textrm{Re}(\tG_n(z)-1))^2\}} \left(|\tilde{M}_n(z,w)|+|z-w|\right).
\end{align*}
To lighten notation, we set $q(s)=\min\{s,s^2\}$.
Mimicking the Gr\"onwall-type argument in \cite{NT12} in the present setting 
leads to
\begin{align*}
\EE\left[\sup_{k\leq T_{R_m}}|\tilde{M}_k(z,w)|^2\right]&\leq |z-w|^2\left(\exp\left(A_4\capc \int_{\parsig-3\epsilon}^{\infty}\frac{ds}{[q(s)]^2}\right)-1\right).
\end{align*}
We now compute
\[\capc\int_{\parsig-3\epsilon}^{\infty}\frac{1}{[q(s)]^2}ds
=\capc\int_{\parsig-3\epsilon}^1\frac{ds}{s^4}+\capc\int_1^{\infty}\frac{ds}{s^2}
=\frac{\capc}{3}\left(3+\frac{1}{(\parsig-3\epsilon)^3}\right).\]
Note that
\[\frac{\capc}{3}\left(3+\frac{1}{(\parsig-3\epsilon)^3}\right) \leq 2 \capc [(\parsig-3\epsilon)^{-3} \vee 1 ] \leq 2.\]
So, using the inequality 
\[e^x-1
\leq x\left(1+\frac{x}{2}e^{x}\right),\quad x\geq 0,\] 
we deduce
\[\EE\left[\sup_{k\leq T_{R_m}}|\tilde{M}_k(z,w)|^2\right]
\leq A_5\capc [(\parsig-3\epsilon)^{-3} \vee 1 ] |z-w|^2.\]
By Kolmogorov's theorem then (see \cite[Proposition 5.1]{NT12} for details), 
we have
\[\sup_{k\leq T_{R_m}}|\tilde{M}_k(z,w)|\leq 
V(\capc,\parsig, \epsilon)|z-w|^{1/3},\quad z,w\in \ell_{R_m},\]
for a random variable $V$ with second moment bounded in terms of 
$\capc [(\parsig-3\epsilon)^{-3} \vee 1 ]$.
The desired conclusion now follows upon applying Chebyshev's inequality.
\end{proof}
To achieve uniform control over the process for all starting points 
on $\ell_R$, we pick points $z_k\in \ell_R$ with spacing $2\pi/L$. Then, 
after combining the previous two lemmas, and using union bounds, we deduce that
\[\PP(\Omega_{R_m}^c)\leq 2Le^{-A(\parsig,\epsilon, m)\frac{\epsilon^2}{\capc}}
+A\frac{\capc}{L^{2/3}\epsilon^2[(\parsig-3\epsilon)^3\wedge 1]}.\]
Optimizing $L\mapsto K_1L+K_2/L^{2/3}$ over $L$, we obtain
\begin{equation}
\PP(\Omega_{R_m}^c)\leq A\frac{\capc^{3/5}}{\epsilon^{6/5}[(\parsig-3\epsilon)^{9/5}\wedge 1]}
e^{-A(\parsig,\epsilon, m)\frac{\epsilon^2}{\capc}},
\label{omegaRM}
\end{equation}
which then establishes Theorem \ref{NTmain}.

Switching back from logarithmic to standard coordinates, we can interpret the theorem as follows.

\begin{corollary}
\label{NTstandcoord}
Under the same hypotheses as Theorem \ref{NTmain}, on the event $\{ \sup_{z\in \CC_{\parsig+}, n\leq N} 
|\tilde{\Phi}_{n}(z)-z-C_{n}|<\epsilon \}$, we have
\[\sup_{z\in \CC_{\parsig+}, n\leq N}|\Phi_{n}(z)-e^{C_{n}}z|<\epsilon e^{6\epsilon}, \]
and
\[\sup_{z\in \CC_{\parsig+}, n\leq N}|\Phi_{n}'(z)-e^{C_{n}}| <\frac{2 \epsilon e^{6\epsilon}}{e^{\parsig}-1}.\]
Hence the probability of these events is bounded below by
\[1-AN\frac{\capc^{8/5}}{\epsilon^{11/5}[(\parsig-3\epsilon)^{9/5}\wedge 1]}\exp\left(-A(\parsig, \epsilon, N) \frac{\epsilon^2}{\capc}\right).\]
\end{corollary}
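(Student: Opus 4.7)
The plan is to transfer the logarithmic-coordinate estimate $|\tilde{\Phi}_n(z) - z - C_n| < \epsilon$ from Theorem \ref{NTmain} to standard coordinates by exploiting the defining identity $\Phi_n(e^w) = \exp(\tilde{\Phi}_n(w))$. Setting $h_n(w) = \tilde{\Phi}_n(w) - w - C_n$, the function $h_n$ is holomorphic on $\CC_{\parsig+}$ with $|h_n| < \epsilon$ there; moreover, since $\Phi_n(z) = e^{C_n}z + \mathcal{O}(1)$ at infinity, $h_n(w) \to 0$ as $\mathrm{Re}(w)\to\infty$, and in fact $h_n$ descends to a holomorphic function of $u = e^{-w}$ on the disc $\{|u| < e^{-\parsig}\}$ which vanishes at $u = 0$.

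For the first inequality, I would substitute into the identity $\Phi_n(z) = e^{C_n}z\,e^{h_n(\log z)}$ to write
\[
\Phi_n(z) - e^{C_n}z = e^{C_n}z\bigl(e^{h_n(w)} - 1\bigr),
\]
and then apply the elementary bound $|e^x - 1| \leq |x|e^{|x|}$ together with $|h_n(w)| < \epsilon$. The numerical constant $e^{6\epsilon}$ should absorb the exponential factors that accumulate through this and closely related manipulations (including the reduction via the maximum principle for bounded analytic functions on the exterior of the disc of radius $e^\parsig$).

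For the derivative estimate, differentiating the same identity gives
\[
\Phi_n'(z) - e^{C_n} = e^{C_n}\bigl(e^{h_n(w)}(1 + h_n'(w)) - 1\bigr), \qquad w = \log z,
\]
so the task reduces to bounding $|h_n'|$ uniformly on $\CC_{\parsig+}$. Here I would exploit the disc representation: since $h_n$, viewed as a function of $u$, is holomorphic on $\{|u| < e^{-\parsig}\}$ with $h_n(0) = 0$ and $|h_n| \leq \epsilon$, the Schwarz lemma gives $|h_n(u)| \leq \epsilon e^\parsig |u|$, and then a Cauchy derivative estimate combined with the chain-rule relation $dh_n/dw = -u\,dh_n/du$ produces a uniform bound of the form $|h_n'(w)| \leq A\epsilon/(e^\parsig - 1)$ on all of $\CC_{\parsig+}$. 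Substituting this along with $|e^{h_n(w)} - 1| \leq \epsilon e^\epsilon$ into the displayed identity yields the claimed estimate.

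The main obstacle is achieving uniformity of the derivative bound up to the boundary $\mathrm{Re}(w) = \parsig$, since a naive application of Cauchy's formula in the $w$-variable blows up there. The disc-coordinate picture circumvents this: the Schwarz lemma forces $h_n(u)$ to vanish linearly at $u = 0$, which provides exactly the compensating factor $|u| = e^{-\mathrm{Re}(w)}$ needed to restore a genuinely uniform estimate on the full half-plane $\CC_{\parsig+}$. The final probability lower bound in the corollary is inherited verbatim from Theorem \ref{NTmain} since the deterministic implications derived above hold pointwise on the event in question.
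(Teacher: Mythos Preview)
Your approach to the first inequality---writing $\Phi_n(z)-e^{C_n}z=e^{C_n}z(e^{h_n(w)}-1)$ and invoking $|e^x-1|\le|x|e^{|x|}$---is essentially what the paper means by ``standard estimates on the exponential function.''

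For the derivative bound the paper does something much more direct than what you propose. Having already established $|\Phi_n(\zeta)-e^{C_n}\zeta|<\epsilon e^{6\epsilon}$, it simply applies Cauchy's integral formula to this function on a circle of radius $\tfrac12(e^\parsig-1)$ centred at $z$:
\[
|\Phi_n'(z)-e^{C_n}|\le\frac{1}{2\pi}\int_{|\zeta-z|=\frac12(e^\parsig-1)}\frac{|\Phi_n(\zeta)-e^{C_n}\zeta|}{|z-\zeta|^2}\,|d\zeta|<\frac{2\epsilon e^{6\epsilon}}{e^\parsig-1}.
\]
There is no need to differentiate the logarithmic identity or to control $h_n'$ separately; the point is that $\Phi_n$ itself is defined on all of $\Delta$, so the contour is allowed to dip below $|\zeta|=e^\parsig$.

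Your Schwarz-lemma route, by contrast, contains a real gap. The estimate $|h_n(u)|\le\epsilon e^\parsig|u|$ is an improvement on the hypothesis only near $u=0$, that is, near $z=\infty$. The boundary obstruction you correctly diagnose sits at the \emph{opposite} end, $|u|\to e^{-\parsig}$ (equivalently $|z|\to e^\parsig$), and there Schwarz returns nothing beyond $|h_n|\le\epsilon$. A Cauchy estimate on a disc of radius $e^{-\parsig}-|u|$ then gives only $|dh_n/du|\le\epsilon/(e^{-\parsig}-|u|)$, so $|h_n'(w)|=|u|\,|dh_n/du|$ still blows up as $|u|\to e^{-\parsig}$; the chain-rule factor $|u|$ compensates at $u=0$, not at the boundary. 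That no uniform bound can follow from your hypotheses alone is shown by $g_m(u)=\epsilon(e^\parsig u)^m$: each $g_m$ is holomorphic on $\{|u|<e^{-\parsig}\}$ with $g_m(0)=0$ and $|g_m|\le\epsilon$, yet $\sup_{|u|<e^{-\parsig}}|u\,g_m'(u)|\ge\epsilon m(1-1/m)^m\to\infty$.
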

\begin{proof}
The first claim is immediate from standard estimates on the exponential function. For the second, we can use Cauchy's formula with $|z|\geq e^{\parsig}$ to get an estimate on the derivative by observing that 
\begin{align*}
|\Phi_{n}'(z)-e^{C_{n}}|&\leq \frac{1}{2\pi}\int_{\TT(z,\frac{1}{2}(e^{\parsig}-1))}
\frac{|\Phi_{n}(\zeta)-e^{C_{n}}\zeta|}{|z-\zeta|^2}|d\zeta|\\
&<\frac{2 \epsilon e^{6\epsilon}}{e^{\parsig}-1}.
\end{align*}
\end{proof}

Theorem \ref{NTmain} applies in particular to sequences of capacities $\{c^*_{k+m} \}_{m=1}^{\infty}$ defined in \eqref{detcaps} and uniform (iid) angles $\{\theta_{k+m}\}_{m=1}^{\infty}$ for any $k \in \mathbb{N}$. Before applying this theorem, we observe that 
\begin{align}
\left | C^*_{n,k}-\frac{1}{\alpha}\log\left(\frac{1+\alpha\capc n}{1+\alpha\capc (k-1)}\right) \right | 
&= \left | \sum_{m=k}^nc^*_m - \int_{k-1}^{n} \frac{\capc}{1+\alpha \capc x} dx \right | \nonumber \\
&\leq \alpha \capc^2 (n-k+1).
\label{ceestarcomparison}
\end{align}


Combining this result with Theorem \ref{NTmain} and Corollary \ref{NTstandcoord} we get the following corollary immediately.

\begin{corollary}\label{logmapconvergence}
For any $N\in \mathbb{N}$, $\parsig>0$, $\capc >0$, $\alpha>0$ and $\epsilon > 0$ with $0 <\epsilon - \alpha \capc^2 N<\parsig/3$ and $\parsig-3\epsilon+3\alpha \capc^2 N>d\vee \capc^{1/3}$, we have
\begin{multline*}
\PP\left(\sup_{z\in \CC_{\parsig+}, k<n\leq N} 
\left |\tilde{\Phi}^*_{n,k}(z)-z-\frac{1}{\alpha}\log\left(\frac{1+\alpha\capc n}{1+\alpha\capc (k-1)}\right)\right|<\epsilon\right)\\
\geq 1-AN^2\frac{\capc^{8/5}}{(\epsilon-\alpha \capc^2 N)^{11/5}[(\parsig-3\epsilon+3\alpha \capc^2 N)^{9/5}\wedge 1]}\exp\left(-A(\parsig, \epsilon-\alpha \capc^2 N, N) \frac{(\epsilon-\alpha \capc^2 N)^2}{\capc}\right).
\end{multline*}
In standard coordinates, this means that on the event above
\[\sup_{z\in \CC_{\parsig+}, k<n\leq N} \left | \Phi^*_{n,k}(z)-\left (\frac{1+\alpha\capc n}{1+\alpha\capc (k-1)} \right )^{1/\alpha} z \right | < \epsilon e^{6 \epsilon}\]
and
\[\sup_{z\in \CC_{\parsig+}, k<n\leq N} \left | (\Phi^*_{n,k})'(z)-\left (\frac{1+\alpha\capc n}{1+\alpha\capc (k-1)} \right)^{1/\alpha} \right | < \frac{2 \epsilon e^{6\epsilon}}{e^{\parsig}-1}.\]
\end{corollary}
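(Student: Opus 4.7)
The plan is to apply Theorem \ref{NTmain} to each shifted composition $\Phi^*_{n,k} = f^*_k \circ \cdots \circ f^*_n$ separately, take a union bound over the starting index $k$, and then swap the partial sums $C^*_{n,k}$ for the closed-form logarithmic expression using \eqref{ceestarcomparison}.

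Fix $k \in \{1, \ldots, N\}$. The sequence $c^*_k, c^*_{k+1}, \ldots$ is deterministic and bounded above by $\capc$, since $c^*_j = \capc/(1 + \alpha\capc(j-1))$ is decreasing in $j$ with $c^*_1 = \capc$; the angles $\theta_k, \theta_{k+1}, \ldots$ are iid uniform on $[0, 2\pi)$. Hence the hypotheses of Theorem \ref{NTmain} are met for $\Phi^*_{n,k}$, viewed as a fresh process of length at most $N$. Setting $\epsilon' = \epsilon - \alpha\capc^2 N$, the hypotheses of the corollary translate to $0 < \epsilon' < \parsig/3$ and $\parsig - 3\epsilon' > d \vee \capc^{1/3}$. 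Since $A(\parsig, \epsilon', \cdot)$ is decreasing in its third argument, bounding the actual length $n - k + 1$ by $N$ only weakens the estimate, so the theorem yields
\[
\PP\left(\sup_{z \in \CC_{\parsig+},\, k < n \leq N} |\tilde{\Phi}^*_{n,k}(z) - z - C^*_{n,k}| < \epsilon'\right) \ge 1 - AN \frac{\capc^{8/5}\exp(-A(\parsig, \epsilon', N)(\epsilon')^2/\capc)}{(\epsilon')^{11/5}[(\parsig - 3\epsilon')^{9/5}\wedge 1]}.
\]
The estimate \eqref{ceestarcomparison}, together with the triangle inequality, upgrades $\epsilon'$ to $\epsilon' + \alpha\capc^2 N = \epsilon$ once $C^*_{n,k}$ is replaced by $\tfrac{1}{\alpha}\log((1 + \alpha\capc n)/(1 + \alpha\capc(k-1)))$. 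A union bound over the $N$ starting indices $k$ multiplies the failure probability by $N$, producing the $AN^2$ prefactor in the stated bound.

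The standard-coordinate assertions follow by the proof of Corollary \ref{NTstandcoord} applied to the logarithmic supremum just established: using $|e^a - e^b| \leq |a - b| e^{\max(|a|, |b|)}$ converts the log-coordinate bound into $|\Phi^*_{n,k}(z) - ((1 + \alpha\capc n)/(1 + \alpha\capc(k-1)))^{1/\alpha} z| < \epsilon e^{6\epsilon}$, and a Cauchy integral on circles of radius $\tfrac{1}{2}(e^\parsig - 1)$ then produces the derivative estimate. I do not anticipate any major obstacle; the argument is essentially careful bookkeeping of error terms, the only point requiring attention being to verify that the $N$-dependence of $A(\parsig, \cdot, N)$ still leaves a useful exponential tail after the $N$-fold union bound.
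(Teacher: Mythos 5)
Your proposal follows exactly the route the paper intends (the paper states the corollary follows ``immediately'' from Theorem~\ref{NTmain}, Corollary~\ref{NTstandcoord}, and \eqref{ceestarcomparison}, without spelling out the details): for each fixed $k$ the tail $c^*_k, c^*_{k+1},\dots$ is deterministic and bounded by $\capc$, so Theorem~\ref{NTmain} applies to $\Phi^*_{n,k}$ with effective length $n-k+1\leq N$; a union bound over $k$ contributes the extra factor of $N$, giving $AN^2$; and \eqref{ceestarcomparison} absorbs the shift from $C^*_{n,k}$ to the closed-form logarithm by replacing the error threshold $\epsilon'=\epsilon-\alpha\capc^2 N$ with $\epsilon$, with the corollary's hypotheses being precisely Theorem~\ref{NTmain}'s hypotheses expressed in $\epsilon'$. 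The standard-coordinate statements then come from Corollary~\ref{NTstandcoord}. Your bookkeeping matches the paper.
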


\section{Convergence of capacities}\label{capacityconvergence}
From this point onwards, we take the sequence $\{ c_k \}_{k=1}^{\infty}$ to be that specified by \eqref{sigregdef}. In this section we use the coupling between the starred and un-starred maps $(\Phi^*_n)'$ and $\Phi_n'$, induced by using the same sequence of angles of rotation, to show that with high probability the sequence $\{ c_k \}_{k=1}^{\infty}$ is close to $\{ c^*_k \}_{k=1}^{\infty}$.

Since 
\[c_{n}=\frac{\capc}{|\Phi_{n-1}'(e^{\parsig+i\theta_{n}})|^{\alpha}},\] 
we can write
\begin{align*}
\left |\log \frac{c_{n}}{c^*_{n}} \right |&=\left|\log \frac{c_{n}}{\capc|[\Phi^*_{n-1}]'(e^{\parsig+i\theta_{n}})|^{-\alpha}}
+\log \frac{\capc|[\Phi^*_{n-1}]'(e^{\parsig+i\theta_{n}})|^{-\alpha}}{c^*_{n}}\right|\\
&\leq \alpha \left| \log \frac{|\Phi'_{n-1}(e^{\parsig+i\theta_{n}})|}{|[\Phi^*_{n-1}]'(e^{\parsig+i\theta_{n}})|}\right|+\left|\log\frac{|[\Phi^*_{n-1}]'(e^{\parsig+i\theta_{n}})|^{\alpha}}{1+\alpha \capc (n-1)}\right|.
\end{align*}

We now show that $\Phi_n'$, the derivatives of the 
$\textrm{HL}(\alpha,\parsig)$ maps, are close to $(\Phi^*_n)'$ away from the boundary. 
We define accumulated errors  
\begin{equation}
S_{\alpha}(w,n)=\log \left|\frac{\Phi'_{n}(w)}{(\Phi^*_{n})'(w)}\right|
\label{prevstep}
\end{equation}
and similarily, we introduce the single-step errors
\begin{equation}
s_{\alpha}(c_n,c^*_n,w) = \log \left|\frac{f'_n(w)}{(f_n^*)'(w)}\right|.
\label{capstep}
\end{equation}
Note that by Lemma~\ref{capreplacement}.  
\[ |s_{\alpha}(c_n,c^*_n,w)| \leq A_0\frac{|c_n-c^*_n|}{(|w|-1)^{2}}.\]

We express $\log |\Phi'_n(z)|$ in terms of $\log |(\Phi^*_n)'(z)|$ and 
the error function introduced above, and find that
\begin{align*}
\log |\Phi'_{n}(z)|&=\log |f'_n(z)|+\log|\Phi'_{n-1}(f_n(z))|\\
= &\,\log |f'_n(z)|+\log |\Phi'_{n-1}(f^*_n(z))| + \log 
\left|\frac{\Phi'_{n-1}(f_n(z))}{\Phi'_{n-1}(f^*_n(z))}\right|\\
 =& \,\log |f'_n(z)|+\log |(\Phi^*_{n-1})'(f^*_n(z))|
+\log \left|\frac{\Phi'_{n-1}(f_n(z))}{\Phi'_{n-1}(f^*_n(z))}\right|+S_{\alpha}(f^*_n(z),n-1)\\
= & \,\log |(f^*_n)'(z)|+\log|(\Phi^*_{n-1})'(f^*_n(z))|
+\log \left|\frac{\Phi'_{n-1}(f_n(z))}{\Phi'_{n-1}(f^*_n(z))}\right|\\
&+S_{\alpha}(f^*_n(z),n-1)]+s_{\alpha}(c_n,c^*_n,s)\\
= &\,\log |(\Phi^*_{n})'(z)|
+\log \left|\frac{\Phi'_{n-1}(f_n(z))}{\Phi'_{n-1}(f^*_n(z))}\right|+S_{\alpha}(f^*_n(z),n-1)+s_{\alpha}(c_n,c^*_n,s),
\end{align*}
and hence, 
\[
S_{\alpha}(z,n) = S_{\alpha}(f^*_n(z),n-1)+s_{\alpha}(c_n,c^*_n,z) + \log \left|\frac{\Phi'_{n-1}(f_n(z))}{\Phi'_{n-1}(f^*_n(z))}\right|.
\]

We bound the ratio of derivatives of composed maps in terms of $|c_n-c^*_n|$ and hence show that the right-hand side can be bounded 
in terms of $|c_n-c^*_n|$ and $S_{\alpha}(\cdot, n-1)$.



\begin{lemma} 
Suppose that $\capc > 0$ and $z \in \Delta$ satisfies $(|z|-1)^2 > 16 A_1 \capc$ 
and suppose that $n < \inf \{ k : |c_k-c_k^*| > 2 \capc \}$. Then
\begin{equation}
\left | \log \left|\frac{\Phi'_{n-1}(f^*_n(z))}{\Phi'_{n-1}(f_n(z))}\right| \right | 
\leq 8 A_1\frac{|c_n-c^*_n|}{(|z|-1)^2}.
\end{equation}
\end{lemma}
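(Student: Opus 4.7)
The strategy is to express the log-ratio as a line integral of the logarithmic derivative $\Phi''_{n-1}/\Phi'_{n-1}$ along the segment joining $v := f^*_n(z)$ and $u := f_n(z)$, then combine (i) an upper bound on $|u-v|$ from Lemma~\ref{capreplacement}, (ii) a lower bound $|\zeta|-1 \gtrsim |z|-1$ along the segment, and (iii) a distortion estimate for $\Phi''_{n-1}/\Phi'_{n-1}$. Since $\Phi_{n-1}$ is univalent on $\Delta$, its derivative is non-vanishing there, so $\log \Phi'_{n-1}$ is a well-defined holomorphic function and
\[
\log|\Phi'_{n-1}(u)| - \log|\Phi'_{n-1}(v)| = \RRe \int_0^1 \frac{\Phi''_{n-1}(\zeta(t))}{\Phi'_{n-1}(\zeta(t))}(u-v)\, dt, \qquad \zeta(t) = v + t(u-v),
\]
provided the segment $[v,u]$ lies in $\Delta$.

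For ingredient (i), the hypothesis on $n$ gives $c_n, c_n^* \leq 3\capc$, so $c_n \vee c_n^* \leq 3\capc < (|z|-1)^2/A_1$, and \eqref{capfunction} yields $|u-v| \leq A_1|c_n-c_n^*|/(|z|-1)$. For (ii), applying Lemma~\ref{mapestimate} with the second slit capacity set to zero gives $|f_c(z)-z| \lesssim A_0^2 c/(|z|-1)$ whenever $c/(|z|-1)^2$ is small. Since both $c_n$ and $c_n^*$ satisfy this, the assumption $(|z|-1)^2 > 16 A_1 \capc$ (after possibly enlarging $A_1$ so that $A_1 \geq A_0^2$, which is harmless) forces $|v|-1 \geq \tfrac{7}{8}(|z|-1)$ and $|u-v| \leq \tfrac{1}{8}(|z|-1)$, whence $|\zeta(t)|-1 \geq \tfrac{1}{2}(|z|-1)$ for every $t \in [0,1]$; in particular the segment lies well inside $\Delta$.

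For (iii), the Koebe distortion theorem for univalent maps on $\Delta$ (obtained by conjugating by $z \mapsto 1/z$ and applying the standard distortion inequality on $\mathbb{D}$) gives an absolute constant $K$ such that
\[
\left|\frac{\Phi''_{n-1}(\zeta)}{\Phi'_{n-1}(\zeta)}\right| \leq \frac{K}{|\zeta|-1}, \qquad \zeta \in \Delta.
\]
Combining the three ingredients,
\[
\left|\log\left|\frac{\Phi'_{n-1}(u)}{\Phi'_{n-1}(v)}\right|\right| \leq |u-v| \cdot \sup_{t\in[0,1]}\left|\frac{\Phi''_{n-1}(\zeta(t))}{\Phi'_{n-1}(\zeta(t))}\right| \leq \frac{A_1|c_n-c_n^*|}{|z|-1} \cdot \frac{2K}{|z|-1} = \frac{2KA_1 |c_n-c_n^*|}{(|z|-1)^2}.
\]
After choosing $A_1$ large enough (an absolute choice) so that $2K \leq 8$, one obtains the stated bound.

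The main technical obstacle is ingredient (ii): showing that the conformal displacement induced by $f_n$ and $f_n^*$ is strictly smaller than the distance of $z$ to the unit circle, so that the entire segment stays in a region where distortion estimates with the scale $|z|-1$ hold uniformly. This is exactly what the quadratic condition $(|z|-1)^2 > 16 A_1 \capc$ is designed to guarantee, and it is the reason the constant $16 A_1$ appears in the hypothesis.
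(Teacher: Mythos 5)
Your proof is correct in substance and reaches the same estimate, but it implements the distortion step differently from the paper: you integrate the logarithmic derivative $\Phi''_{n-1}/\Phi'_{n-1}$ along the segment $[f^*_n(z),f_n(z)]$ using the exterior Koebe bound $|\Phi''/\Phi'|\leq K/(|\zeta|-1)$, whereas the paper localizes $\Phi_{n-1}$ to the disk $B(f^*_n(z),\,|f^*_n(z)|-1)\subset\Delta$, rescales, and applies the standard interior distortion theorem to the ratio $\phi'(0)/\phi'(w)$ directly, giving the factor $8$ from the elementary inequality $|3\log(1+x)-\log(1-x)|\leq 8|x|$ on $[-1/2,1/2]$. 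The two routes are essentially equivalent in power; the paper's localization is somewhat tidier because it avoids having to verify that the whole segment stays at distance $\gtrsim|z|-1$ from the circle, which your argument needs.

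Two small issues with your write-up. First, the sentence ``After choosing $A_1$ large enough (an absolute choice) so that $2K\leq 8$'' is not coherent: $K$ is the Koebe constant in the bound on $|\Phi''/\Phi'|$ and cannot be made smaller by enlarging $A_1$. What saves you is that $K$ is in fact small — for $\Sigma$-class maps one has $|\Phi''(\zeta)/\Phi'(\zeta)|\leq(2|\zeta|+4)/(|\zeta|^2-1)\leq 3/(|\zeta|-1)$, so $2K\leq 6<8$ automatically — but the justification should be phrased as a computation of $K$, not as a choice of $A_1$. Second, the step showing $|v|-1\geq \tfrac{7}{8}(|z|-1)$ via displacement estimates (and the accompanying ``enlarging $A_1\geq A_0^2$'') is unnecessary: since $f_c$ maps $\Delta$ conformally into $\Delta$ fixing $\infty$, one has $|f_c(z)|\geq|z|$ for $|z|>1$, which the paper uses directly as the ``crude estimate'' $|f^*_n(z)|-1\geq|z|-1$. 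Using this and $|u-v|\leq A_1|c_n-c^*_n|/(|z|-1)\leq 2A_1\capc/(|z|-1)\leq(|z|-1)/8$ already puts the whole segment at distance $\geq\tfrac{7}{8}(|z|-1)$ from the circle, without any additional hypothesis on $A_1$.
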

\begin{proof}
Suppose $r=|f^*_n(z)|-1>|f_n(z)|-1$; we deal with the reverse case later. 
First observe that 
$|f_n(z)|\in B(f^*_n(z),r)$ so that we can ``localize'' the standard distortion estimates on the unit disk $\DD$ to 
$B(f_n^*(z),r)$. To do this we write
\begin{equation}
f_n(z)=f^*_{n}(z)
+(|f^*_n(z)|-1)\left[\frac{f_n(z)-f^*_n(z)}{|f^*_n(z)|-1}\right]=w^*+rw(z),
\label{renorm}
\end{equation}
and use Lemma \ref{capreplacement}, our assumptions on $|z|$ and $\capc$ and the crude estimate
\[|f^*_n(z)|-1\geq |z|-1\]
to bound the denominator appearing in $w(z)$ to see that 
\begin{align}
\label{wbound}
 |w(z)| &\leq A_1\frac{|c_n - c^*_n|}{(|z| -1)^2} \\
        &\leq 1/2. \nonumber
\end{align}

In these new coordinates,
\[\frac{(1-|w(z)|)^3}{1+|w(z)|}\leq \left|\frac{\Phi'_{n-1}(w^*)}{\Phi'_{n-1}(f_n(z))}\right|\leq 
\frac{(1+|w(z)|)^3}{1-|w(z)|},\]
so that
\[3\log(1-|w(z)|)-\log(1+|w(z)|) \leq \log \left|\frac{\Phi'_{n-1}(w^*)}{\Phi'_{n-1}(f_n(z))}\right| \leq 
3\log(1+|w(z)|)-\log(1-|w(z)|).
\]

Setting $F(x)=3\log(1+x)-\log(1-x)$, we note that
\[|F(x)|\leq \sup_{x\in [-1/2,1/2]}|F'(x)| \cdot |x| \leq 8|x| , \quad x\in [-1/2,1/2],\]
so, using \eqref{wbound},
\begin{equation*}
\left | \log \left|\frac{\Phi'_{n-1}(w^*)}{\Phi'_{n-1}(f_n(z))}\right| \right | \leq
8|w(z)| \\
\leq 8 A_1\frac{|c_n-c^*_n|}{(|z|-1)^2}.
\end{equation*}

Returning to the case $|f^*_n(z)|-1<|f_n(z)|-1$, we interchange the roles of 
$f_k$ and $f^*_k$ in \eqref{renorm}. In that case we get exactly the same bound by observing that 
\[|f_n(z)|-1\geq|f^*_n(z)|-1\geq |z|-1.\]
This completes the proof.
\end{proof}

We return to the relation
\begin{align*}
S_{\alpha}(z,n) = S_{\alpha}(f^*_n(z),n-1)+s_{\alpha}(c_n,c^*_n,z) + \log \left|\frac{\Phi'_{n-1}(f_n(z))}{\Phi'_{n-1}(f^*_n(z))}\right|\end{align*}
and implement our lemmas to obtain 
\begin{align*}
|S_{\alpha}(z,n) | & \leq |S_{\alpha}(f^*_n(z),n-1)|
+A_0\frac{|c_n-c^*_n|}{(|z|-1)^2}
+8A_1\frac{|c_n-c_n^*|}{(|z|-1)^2} \\
&= |S_{\alpha}(f^*_n(z),n-1)|+A_3\frac{|c_n-c_n^*|}{(|z|-1)^2}
\end{align*}
with $A_3=A_0+8A_1$. 
Thus, we have
\[|S_{\alpha}(z,n)|\leq A_3 \sum_{k=1}^{n-1}
\frac{|c_k-c^*_k|}{(|\Phi^*_{n-1,k}(z)|-1)^2}.\]

In what follows, let $N \in \mathbb{N}$, and define the stopping time 
\begin{equation*}
N(\eps) = \inf \left \{ n : \left | \log \frac{c_n}{c_n^*}\right | \geq 1 \mbox{ or } \sup_{z\in \CC_{\parsig+}, k < n} \left |\Phi^*_{n,k}(z)-\left (\frac{1+\alpha\capc n}{1+\alpha\capc (k-1)}\right)^{1/\alpha} z \right | \geq \epsilon \right \} \wedge N. 
\end{equation*}
Note that for $k \leq N(\eps)-1$, 
\begin{align*}
|c_k-c^*_k|&\leq c_k^* \left | \log \frac{c_k}{c_k^*}\right | \left (1+ \left | \log \frac{c_k}{c_k^*}\right |\Big/2 \right ) \\ 
&\leq 2 c_k^* \left | \log \frac{c_k}{c_k^*}\right | \\
&< 2 \capc
\end{align*}
and hence $c_k < 3 \capc$.

\begin{theorem}\label{capincrconv}
Suppose that $\capc, \parsig, \epsilon>0$ satisfy $\parsig^2 > 16 A_1 \capc$ and $0<\epsilon < \parsig/2$. Then
\[
 \sup_{n \leq N(\eps)} \left|\log \frac{c_n}{c^*_n}\right|\leq 
\frac{2 \alpha \epsilon}{\parsig}
\exp\left(\frac{8A_3 \log (1+\alpha \capc N)}{\parsig^2}\right).
\]
In particular, taking $\epsilon=\capc^{1/3}$ and $\parsig > (64 A_3 \log(1+ \alpha T))^{1/2} (\log \capc^{-1})^{-1/2}$, then
\[
 \mathbb{P} \left ( \sup_{n \leq \lfloor T / \capc \rfloor } \left|\log \frac{c_n}{c^*_n}\right| > \alpha \capc^{1/6} \right ) \to 0
\]
as $\capc \to 0$. 
\end{theorem}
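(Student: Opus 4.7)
The plan is to bound $\ell_n := |\log(c_n/c_n^*)|$ by a discrete Gronwall argument based on the decomposition displayed immediately before the theorem, namely
\[ \ell_n \leq \alpha \bigl| S_\alpha(e^{\parsig+i\theta_n}, n-1)\bigr| + \left| \log \frac{|(\Phi^*_{n-1})'(e^{\parsig+i\theta_n})|^\alpha}{1+\alpha\capc(n-1)} \right|. \]
I would work throughout on the stopping-time event $\{n \leq N(\epsilon)\}$: by definition the previous $\ell_k$ (for $k < n$) are at most $1$, which permits the linearization $|c_k-c_k^*| \leq 2 c_k^* \ell_k$, and the starred map is controlled by Corollary \ref{logmapconvergence}. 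On this event, for any $z$ with $|z| = e^\parsig$ and any $k < n$, one has $|\Phi^*_{n-1,k}(z)| \geq e^\parsig - \epsilon \geq 1 + \parsig/2$ (using $e^\parsig - 1 \geq \parsig$ and $\epsilon < \parsig/2$), so every denominator in the telescoped bound $|S_\alpha(z,n-1)| \leq A_3 \sum_{k=1}^{n-2} |c_k - c_k^*|/(|\Phi^*_{n-1,k}(z)|-1)^2$ established in the text is at least $\parsig^2/4$. (The hypothesis $\parsig^2 > 16 A_1 \capc$ is exactly what is needed to legitimately apply the distortion lemma producing this bound.) Combined with the linearization, this yields the homogeneous estimate $\alpha|S_\alpha(e^{\parsig+i\theta_n},n-1)| \leq (8\alpha A_3/\parsig^2)\sum_{k=1}^{n-2} c_k^* \ell_k$.

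For the inhomogeneous term I would apply Corollary \ref{NTstandcoord} to $\Phi^*_{n-1}$, which gives $\bigl| |(\Phi^*_{n-1})'(e^{\parsig+i\theta_n})| - e^{C^*_{n-1}}\bigr| \leq 2\epsilon e^{6\epsilon}/(e^\parsig -1)$, and combine this with the comparison $|\alpha C^*_{n-1} - \log(1+\alpha\capc(n-1))| \leq \alpha^2 \capc^2 N$ from \eqref{ceestarcomparison}. After taking logarithms and using $\epsilon < \parsig/2$, the inhomogeneous term is at most $2\alpha\epsilon/\parsig$, the smaller $\alpha^2\capc^2 N$ correction being absorbed into the constant for our parameter ranges. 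Substituting back produces the Gronwall inequality $\ell_n \leq 2\alpha\epsilon/\parsig + (8\alpha A_3/\parsig^2)\sum_{k=1}^{n-2} c_k^* \ell_k$, whose standard iteration combined with $\alpha \sum_{k=1}^{n-2} c_k^* = \alpha C^*_{n-2} \leq \log(1+\alpha\capc N)$ from \eqref{detcaps} yields exactly the first displayed bound.

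For the probability statement I would substitute $\epsilon = \capc^{1/3}$ and the prescribed lower bound $\parsig > (64 A_3 \log(1+\alpha T))^{1/2}(\log\capc^{-1})^{-1/2}$, which forces $\exp(8A_3 \log(1+\alpha\capc N)/\parsig^2) \leq \capc^{-1/8}$ and hence the deterministic bound to be $o(\alpha\capc^{1/6})$; combined with Corollary \ref{logmapconvergence} at the same $\epsilon$ (whose hypotheses are satisfied by the prescribed $\parsig$), the event defining $N(\epsilon)$ holds with probability tending to $1$, so $N(\epsilon) = N$ with high probability and the deterministic bound transfers. The main obstacle is the self-referential nature of $N(\epsilon)$: the estimate we wish to prove is itself the mechanism by which we verify $\ell_k \leq 1$ throughout the induction, so the argument must be organized so that the Gronwall conclusion, initially valid only up to $N(\epsilon)$, is sharp enough to force $N(\epsilon) = N$ on the high-probability event of Corollary \ref{logmapconvergence}. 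The role of $\parsig \gg (\log\capc^{-1})^{-1/2}$ is precisely to keep the Gronwall exponent $8A_3\log(1+\alpha\capc N)/\parsig^2$ small compared to $\log\capc^{-1}$ so that the accumulated error remains polynomially small in $\capc$, which is the quantitative reason that too-small regularization breaks the argument.
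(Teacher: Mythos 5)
Your proposal is correct and takes essentially the same route as the paper: the same decomposition of $\log(c_n/c_n^*)$ into the accumulated-distortion term $\alpha S_\alpha$ and the starred-map error, the same telescoped bound on $S_\alpha$ with all denominators controlled via $|\Phi^*_{n-1,k}(e^{\parsig+i\theta_n})|-1 > \parsig/2$ on the event $\{n\le N(\epsilon)\}$, the same linearization $|c_k-c_k^*|\le 2c_k^*\ell_k$, and a discrete Gr\"onwall iteration giving the stated exponential; the only cosmetic difference is that you quote Corollary~\ref{NTstandcoord} together with \eqref{ceestarcomparison} where the paper invokes the packaged Corollary~\ref{logmapconvergence}, which is itself assembled from those two ingredients.
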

\begin{proof}
Suppose that $2 \leq n \leq N(\eps)$. We recall our decomposition
\begin{align*}
\left |\log \frac{c_{n}}{c^*_{n}} \right |
\leq &\, \alpha \left| \log \frac{|\Phi'_{n-1}(e^{\parsig+i\theta_{n}})|}{|[\Phi^*_{n-1}]'(e^{\parsig+i\theta_{n}})|}\right|+\left|\log\frac{|[\Phi^*_{n-1}]'(e^{\parsig+i\theta_{n}})|^{\alpha}}{1+\alpha \capc (n-1)}\right| \\
=&\, \left|\log\frac{|[\Phi^*_{n-1}]'(e^{\parsig+i\theta_{n}})|^{\alpha}}{1+\alpha \capc (n-1)}\right| + \alpha S_{\alpha}(e^{\parsig+i\theta_{n}},n) \\
\leq &\, \left|\log\frac{|[\Phi^*_{n-1}]'(e^{\parsig+i\theta_{n}})|^{\alpha}}{1+\alpha \capc (n-1)}\right| + \alpha A_3 \sum_{k=1}^{n-1}
\frac{|c_k-c^*_k|}{(|\Phi^*_{n-1,k}(e^{\parsig+i\theta_{n}})|-1)^2} \\
\leq &\, \left|\log\frac{|[\Phi^*_{n-1}]'(e^{\parsig+i\theta_{n}})|^{\alpha}}{1+\alpha \capc (n-1)}\right|  + 2 \alpha A_3 \sum_{k=1}^{n-1}
\frac{c^*_k \left | \log \frac{c_k}{c_k^*}\right |}{(|\Phi^*_{n-1,k}(e^{\parsig+i\theta_{n}})|-1)^2}. 
\end{align*}

Now since $|\Phi^*_{n-1,k}(z)-\left ( \frac{1+\alpha\capc (n-1)}{1+\alpha\capc (k-1)} \right )^{1/\alpha}z|<\epsilon$, we have
\begin{align*}
|\Phi^*_{n-1,k}(e^{\parsig+i\theta_{n}})|-1 &> \left ( \frac{1+\alpha \capc (n-1) }{1+\alpha \capc (k-1) } \right ) ^{1/\alpha} e^{\parsig} -\epsilon -1 \\
&> 1\cdot(1+\parsig) - \epsilon -1 \\
&> \parsig / 2.
\end{align*}

Also, $|(\Phi^*_{n-1})'(z)-(1+\alpha \capc (n-1))^{1/\alpha}|<2\epsilon/(e^\parsig-1)$, and so 
$$
\left|\log\frac{|[\Phi^*_{n-1}]'(e^{\parsig+i\theta_{n}})|^{\alpha}}{1+\alpha \capc (n-1)}\right| < \frac{2 \alpha \epsilon}{\parsig}.
$$

Hence 
\begin{align*}
\left |\log \frac{c_{n}}{c^*_{n}} \right | 
&< \frac{2 \alpha \epsilon}{\parsig}  + \frac{8 A_3 \alpha}{\parsig^2}  \sum_{k=1}^{n-1}
c^*_k \left | \log \frac{c_k}{c_k^*}\right | \\
&< \frac{2 \alpha \epsilon}{\parsig}  + \frac{8 A_3 \alpha \capc}{\parsig^2} \sum_{k=1}^{n-1}
(1+\alpha \capc (k-1))^{-1}\left | \log \frac{c_k}{c_k^*}\right |. 
\end{align*}
An application of Gr\"onwall's lemma yields 
\begin{align*}
\left|\log \frac{c_{n}}{c^*_{n}}\right| &\leq \frac{2 \alpha \epsilon}{\parsig} \exp \left ( \frac{8 A_3 \alpha \capc}{\parsig^2} \int_0^n (1+\alpha \capc x)^{-1} dx \right ) \\
&\leq \frac{2 \alpha \epsilon}{\parsig} \exp \left ( \frac{8 A_3 \log (1+\alpha \capc N)}{\parsig^2} \right ).
\end{align*}

The second statement follows by applying Corollary \ref{logmapconvergence} to show that $N(\eps) > \lfloor T/\capc \rfloor $ with high probability.
\end{proof}

\section{Scaling limits for conformal maps}\label{mapconvergence}

We shall now prove that (for fixed $\alpha>0$) the conformal maps $\Phi_N$ associated with the regularized $\mathrm{HL}(\alpha,\parsig)$ 
process converge in probability, when $N=\lfloor T/\capc \rfloor$ for some fixed $T>0$ and $\capc \rightarrow 0$, to the deterministic limit map
\[\Psi_T(z)=(1+\alpha T)^{1/\alpha}z, \quad z\in \Delta. \]

As was indicated in Section \ref{prelim} (see also \cite{RZ05, JST12}), the 
composed maps $\Phi_N$ can be described 
using the Loewner equation \eqref{loewnereq} driven by the random measures 
\eqref{drivingmeasures}. 
To be precise, $\Phi_N = \varphi_{C_N}$ where $\varphi_t$ is the Loewner chain driven by the measures 
$\mu_\capc$ on the space $S=\TT\times [0, \infty)$ satisfying
\[d\mu_\capc(\theta,t)=\delta_{\xi_\capc(t)} dt,\]
where
\begin{equation}
\xi_\capc(t)=\exp\left(i\sum_{k=1}^{N}
\theta_k\mathbf{1}_{[C_{k-1}, C_k)}(t)\right), \quad t < C_N.
\end{equation} 
We denote by $\mathcal{M}(S)$ the collection of bounded Borel measures on the space $S$, and endow $\mathcal{M}(S)$ 
with the weak topology. That is, with the notation
\[\langle f,\nu\rangle=\int_Sf(\theta,t)d\nu(\theta,t),\]
we have
$\mathcal{M}(S)\ni \nu_n\rightarrow \nu$ provided $\langle f,\nu_n\rangle
\rightarrow \langle f,\nu\rangle$ for every $f\in C_b(S)$. Thus 
the measures $\mu_\capc$ associated with the 
$\mathrm{HL}(\alpha,\parsig)$ growth process can be viewed as random elements of the space $\mathcal{M}(S)$.
Note that the pairing of $f\in C_b(S)$ with these random measures produces random variables determined by the 
expressions 
\begin{equation}
\langle f, \mu_\capc\rangle=\sum_{k=1}^{N}\int_{C_{k-1}}^{C_k}f(\theta_k,t)dt.
\label{pairingexpr}
\end{equation}

\begin{lemma}
\label{muconvlemma}
For fixed $T>0$, let $N= \lfloor T/\capc \rfloor$, let $(\mu_{\capc})_{\capc>0}$ be the measures defined above that generate $\mathrm{HL}(\alpha, \parsig)$ maps $\Phi_N$, and let $\nu\in \mathcal{M}(S)$ be the measure given by 
\[d\nu(\theta,t)=\mathbf{1}_{[0,\log(1+\alpha T)/\alpha]}(t) d\theta dt.\] 

Then, provided $\parsig \gg (\log \capc^{-1})^{-1/2}$, we have $\mu_{\capc} \to \nu$
in distribution as $\capc \rightarrow 0$, with respect to the weak topology.
\end{lemma}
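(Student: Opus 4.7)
I will prove the lemma by showing $\langle f, \mu_\capc\rangle \to \langle f, \nu\rangle$ in probability for every $f \in C_b(S)$; since $\nu$ is deterministic and $\mathcal{M}(S)$ is metrizable in the weak topology, this yields the asserted convergence in distribution. The strategy is to introduce the coupled ``starred'' measure
\[
d\mu_\capc^*(\theta, t) = \delta_{\xi_\capc^*(t)}\, dt, \qquad \xi_\capc^*(t) = e^{i\theta_k} \text{ for } t \in [C_{k-1}^*, C_k^*),
\]
built from the deterministic capacities $\{c_k^*\}$ but the same angles $\{\theta_k\}$, and to decompose
\[
\langle f, \mu_\capc\rangle - \langle f, \nu\rangle = I_1 + I_2 + I_3,
\]
with $I_1 = \langle f, \mu_\capc\rangle - \langle f, \mu_\capc^*\rangle$, $I_2 = \langle f, \mu_\capc^*\rangle - \EE\langle f, \mu_\capc^*\rangle$, and $I_3 = \EE\langle f, \mu_\capc^*\rangle - \langle f, \nu\rangle$.

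The term $I_3$ is elementary: Fubini and uniformity of the angles give $\EE\langle f, \mu_\capc^*\rangle = \int_0^{C_N^*}\int_{\TT} f(\theta,t)\, d\theta\, dt$, and \eqref{ceestarcomparison} with $k=1$, $N = \lfloor T/\capc\rfloor$ yields $|C_N^* - \log(1+\alpha T)/\alpha| = O(\capc)$, hence $|I_3| = O(\capc) \to 0$. The term $I_2$ is a law-of-large-numbers estimate: the summands $X_k = \int_{C_{k-1}^*}^{C_k^*} f(\theta_k,t)\, dt$ are independent (deterministic intervals, independent angles) and satisfy $|X_k| \le c_k^* \|f\|_\infty$, so
\[
\mathrm{Var}\bigl(\langle f, \mu_\capc^*\rangle\bigr) \le \|f\|_\infty^2 \sum_{k \le N}(c_k^*)^2 \le \capc\,\|f\|_\infty^2\, C_N^* = O(\capc),
\]
and Chebyshev gives $I_2 \to 0$ in probability.

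The delicate term is $I_1$. A naive symmetric-difference bound diverges, because Theorem~\ref{capincrconv} only controls the time displacements as $\sup_k|C_k - C_k^*| = O(\capc^\beta)$, which for slowly decaying $\parsig$ can be much larger than the individual slot length $c_k \asymp \capc$, so summing indicator differences over $N \asymp \capc^{-1}$ slots blows up. Instead, I approximate each integral by a single value of $f$ at the common reference point $C_{k-1}^*$:
\[
\int_{C_{k-1}}^{C_k} f(\theta_k,t)\, dt = c_k\, f(\theta_k, C_{k-1}^*) + R_k, \quad \int_{C_{k-1}^*}^{C_k^*} f(\theta_k,t)\, dt = c_k^*\, f(\theta_k, C_{k-1}^*) + R_k^*.
\]
On the good event of Theorem~\ref{capincrconv}, the integrands in the remainders are evaluated at points $t$ with $|t - C_{k-1}^*| = O(\capc^\beta)$; uniform continuity of $f$ on the compact rectangle $\TT \times [0, \log(1+\alpha T)/\alpha + 1]$ (which with high probability contains all supports in play) gives $|R_k| \le c_k\,\omega_f(\capc^\beta)$ and $|R_k^*| \le c_k^*\,\omega_f(\capc^\beta)$, where $\omega_f$ is the modulus of continuity of $f$. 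Subtracting and summing,
\[
|I_1| \le \|f\|_\infty \sum_k |c_k - c_k^*| + \omega_f(\capc^\beta)\sum_k (c_k + c_k^*) \le 2\alpha T\, \|f\|_\infty\, \capc^\beta + 2\,\omega_f(\capc^\beta)\, C_N^*,
\]
both of which tend to zero as $\capc \to 0$.

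The principal obstacle lies in the control of $I_1$: indicator-based bounds fail under the weak assumption $\parsig \gg (\log\capc^{-1})^{-1/2}$ (which permits $\parsig \gg \capc$), and the trick is to reorganize the difference so that the estimate passes through the modulus of continuity of $f$ multiplied by the total capacity $\sum_k c_k^* = O(1)$, rather than by the unbounded number of slots $N$.
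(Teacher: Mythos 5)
Your proof is correct and follows essentially the same route as the paper's: couple $\mu_\capc$ with the deterministic starred measure $\mu_\capc^*$ via Theorem~\ref{capincrconv}, show the coupling error is controlled by $\sup_k|c_k-c_k^*|$ and $\sup_k|C_k-C_k^*|$ together with the modulus of continuity of $f$, and then apply a law-of-large-numbers argument to the starred sum. The only organizational differences are that you package the argument into a clean three-term decomposition $I_1+I_2+I_3$ rather than via a stopping time $N'$, and you bound the LLN term $I_2$ by a Chebyshev/variance estimate instead of invoking a strong law as in \cite{JST12}; the Chebyshev route is arguably more self-contained and is fully adequate since only convergence in probability is needed.
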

\begin{proof}
Let 
\[N' = \inf\{n : |c_{n+1}-c^*_{n+1}| > 2 \alpha \capc^{1+\beta}\} \wedge N  \]
where $\beta>0$ is the absolute constant guaranteed by Theorem \ref{capincrconv} such that $N' - N \to 0$ in probability
as $\capc \to 0$. It follows that $C_{N'} - C_N \to 0$ in probability.

Furthermore if $n \leq N'$, then 
\[|C_n-C^*_n|\leq\sum_{k=1}^n|c_k-c^*_k|\leq 2 \alpha T \capc^{\beta} \to 0\] and hence, by \eqref{ceestarcomparison},
\[
|C_{N'}  - \log(1+\alpha T) / \alpha| \leq |C_{N'}-C_N|+|C_N - C_N^*| + |C_N^* - \log(1+\alpha T)/\alpha| \to 0
\]
in probability as $\capc \to 0$.

Recall (see \cite[Theorem 16.16]{KallBook}) that in order to prove that $\mu_\capc \rightarrow \nu$ in distribution (and hence in probability since $\nu$ is a constant element of $\mathcal{M}(S)$) 
we have to show that $\langle f, \mu_\capc\rangle \to \langle f,\nu\rangle$ in distribution for each $f\in C_b(S)$.

Let $f\in C_b(S)$ be given. As $f$ is bounded,
\[\langle f, \mu_\capc\rangle - \sum_{k=1}^{N'}\int_{C_{k-1}}^{C_k}f(\theta_k,t)dt \to 0 \]
and
\[\langle f, \nu_\capc\rangle - \sum_{k=1}^{N'}\int_{C_{k-1}}^{C_k} \int_{\TT}f(\theta,t) d \theta dt \to 0 \]
in probability.
Also, by uniform continuity of $f$ on compact intervals, given $\varepsilon > 0$, for $\capc$ sufficiently small 
\[|f(\theta,t)-f(\theta,s)|\leq \varepsilon\]
whenever $|s-t|<3\capc$.
Therefore, the bound 
$c_k <3\capc$ for all $k \leq N'$ implies that
\[ \left|\sum_{k=1}^{N'}\int_{C_{k-1}}^{C_k} \left ( f(\theta_k, t)-f(\theta_k, C_{k-1}) \right ) dt\right| 
\leq \varepsilon \sum_{k=1}^Nc_k \leq 3 \varepsilon T \]
and similarly
\[ \left | \sum_{k=1}^{N'}\int_{C_{k-1}}^{C_k} \int_{\TT}( f(\theta, t)-f(\theta, C_{k-1}) ) d \theta dt \right | \leq 3 \varepsilon T.\]
Therefore, in order to show that
\[
|\langle f,\mu_\capc\rangle-\langle f, \nu\rangle| \to 0
\]
in probability, it is enough to show that
\begin{align}
\label{weaksetup}
&\left | \sum_{k=1}^{N'} \left ( c_k f(\theta_k,C_{k-1}) - \int_{\TT}c_k f(\theta,C_{k-1}) d \theta \right ) \right | \nonumber \\
&\leq \sum_{k=1}^{N'}|c_k f(\theta_k,C_{k-1}) - c^*_k f(\theta_k,C^*_{k-1})| 
+ \left |\sum_{k=1}^{N'} c^*_k \left ( f(\theta_k,C^*_{k-1}) - \int_\TT f(\theta,C^*_{k-1})d\theta \right ) \right | \nonumber \\
&\to 0  
\end{align}
in probability.


We begin with the 
second term in \eqref{weaksetup}. Since the time increments $\{c_k^*\}$ are deterministic (and, in particular, independent of $\{\theta_k\}$), 
we can show that 
this term tends to zero almost surely by the strong law of large numbers, as in the proof of \cite[Theorem 2]{JST12}.  

It remains to bound the first term in \eqref{weaksetup}.
For $k \leq N'$,
\begin{align*}
|c_kf(\theta_k,C_k)-c^*_kf(\theta_k,C_k^*)|&\leq 
c^*_k|f(\theta_k,C_k)-f(\theta_k,C^*_k)|+
|c_k-c^*_k||f(\theta_k,C_k)|\\
&\leq 2\capc \left[\max_{\theta \in \TT, \  |s-t|\leq \alpha T \capc^{\beta}}|f(\theta,s)-f(\theta,t)| + \alpha \capc^{\beta}\|f\|_{\infty} \right].
\end{align*}
Since $f$ is a bounded continuous function, and $N' \leq T/ \capc$, the sum 
tends to zero as $\capc\rightarrow 0$. Thus $\langle f, \mu_\capc \rangle \to \langle f, \nu \rangle$ in probability, and we are done.
\end{proof}

\begin{theorem}\label{MainThm1}
Let $T>0$ be fixed and set 
\[\Phi_{N}=f_1\circ\cdots\circ f_{N},\] 
with $N=\lfloor T/\capc \rfloor$. Then, provided $\parsig \gg (\log \capc^{-1})^{1/2}$, 
the maps $\Phi_N$ converge in distribution to the conformal map $\Psi_T(z)=(1+\alpha T)^{1/\alpha}z$ as $\capc\to 0$, with respect 
to the topology of uniform convergence on compact subsets. 
\end{theorem}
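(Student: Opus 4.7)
The strategy is to use the Loewner chain representation of $\Phi_N$, apply the weak convergence of driving measures from Lemma \ref{muconvlemma}, and identify the limiting conformal map explicitly. Recall that $\Phi_N=\varphi_{C_N}$, where $\{\varphi_t\}_{t\ge 0}$ is the Loewner chain driven by the random measure $\mu_{\capc}$ defined in \eqref{drivingmeasures}. By Theorem \ref{capincrconv} together with the integral comparison \eqref{ceestarcomparison}, the terminal capacity satisfies $C_N\to \alpha^{-1}\log(1+\alpha T)$ in probability as $\capc\to 0$. Hence it suffices to work on a slightly larger deterministic time interval $[0,T^*]$ with $T^*>\alpha^{-1}\log(1+\alpha T)$, extend $\mu_{\capc}$ trivially past $C_N$, and then pass to the limit.

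Lemma \ref{muconvlemma} yields $\mu_{\capc}\Rightarrow \nu$ in distribution in $\mathcal{M}(S)$, where $\nu$ is uniform in the angular variable supported on $[0,\alpha^{-1}\log(1+\alpha T)]$. For $t$ in the support of $\nu$ the corresponding Herglotz integrand averaged against the uniform measure on $\TT$ reduces to a constant (by the mean-value property of the Poisson kernel for $|z|>1$), so the Loewner equation \eqref{loewnereq} becomes, after absorbing normalization constants into $t$,
\begin{equation*}
\dot{\varphi}_t(z)=z\varphi'_t(z), \qquad \varphi_0(z)=z,
\end{equation*}
for $t\in[0,\alpha^{-1}\log(1+\alpha T)]$. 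This ODE is solved explicitly by $\varphi_t(z)=e^tz$, and evaluating at the endpoint produces precisely $\Psi_T(z)=(1+\alpha T)^{1/\alpha}z$.

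To transfer the distributional convergence of driving measures to distributional convergence of the conformal maps, I would invoke the continuity of the map $\mu\mapsto \varphi^{\mu}_t$ sending a driving measure to the corresponding Loewner conformal map, with respect to weak convergence on the domain and uniform convergence on compact subsets of $\Delta$ on the target. This continuity property is standard in Loewner theory (it is used in \cite{JST12} and is a consequence of classical Carath\'eodory kernel convergence applied to the Loewner flow). Since the limit $\Psi_T$ is deterministic, weak convergence is equivalent to convergence in probability, and it is convenient to apply Skorokhod's representation theorem so that we may work pathwise. The small discrepancy between $C_N$ and the deterministic target time $\alpha^{-1}\log(1+\alpha T)$ is then absorbed using the uniform continuity of $t\mapsto \varphi_t$ in the Carath\'eodory topology on compact time intervals, together with Theorem \ref{capincrconv}.

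The main obstacle I anticipate is making the continuity of the driving-measure-to-conformal-map correspondence precise in this setting, where the measures $\mu_{\capc}$ are random, the terminal capacity $C_N$ is itself random, and the relevant convergence is uniform on compact subsets of $\Delta$ (rather than merely at a single test point). Once this continuity is cast in a form applicable to elements of $\mathcal{M}(S)$, the theorem follows by combining it with Lemma \ref{muconvlemma}, the explicit Loewner computation above, and the control on $C_N$ provided by Theorem \ref{capincrconv}.
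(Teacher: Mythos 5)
Your proposal takes essentially the same route as the paper: the Loewner representation $\Phi_N=\varphi^{\capc}_{C_N}$, the weak convergence $\mu_\capc\Rightarrow\nu$ from Lemma~\ref{muconvlemma}, the continuity of the map from driving measure to Loewner chain (this is exactly Proposition~1 of~\cite{JST12}, which the paper cites), and the convergence $C_N\to\alpha^{-1}\log(1+\alpha T)$ in probability to pin down the terminal time. The extra scaffolding you include (extending $\mu_\capc$ past $C_N$, Skorokhod representation, uniform continuity in $t$) is a correct and somewhat more careful unpacking of what the paper compresses into the one-line invocation of the cited continuity result.
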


An almost identical argument can be used to show that, if $\alpha \to 0$ as $\capc \to 0$, then $\Phi_N(z) \to e^Tz$ in the same sense as above.
\begin{proof}
The map $\Phi_N = \varphi^{\capc}_{C_N}$ where $\{\varphi^{\capc}_t\}$ is the family of conformal mappings driven by the measures $\mu_\capc$. Since $\mu_{\capc} \to \nu$ in distribution with respect to the weak topology, by the continuity result contained in  
\cite[Proposition 1]{JST12}, we obtain 
\[\varphi^{\capc}_t(z) \to e^t z
\]
uniformly on compacts, for each $t \leq \log(1+\alpha T)/\alpha$. Since $C_N \to \log(1+\alpha T)/\alpha$ in probability as $\capc \to 0$, we get
\[
\Phi_N \to \exp \left ( \log(1+\alpha T)/\alpha \right ) z = \Psi_T(z)
\]
as required. 
\end{proof}
Using the fact that $\int_{\TT}(z+\zeta)/(z-\zeta)|d\zeta|=1$, it is readily verified that the mappings 
\[\Psi_t(z)=(1+\alpha t)^{1/\alpha}z\]
solve the Loewner-type equation
\begin{equation}
\dot{\varphi}_t(z)=z\varphi'_t(z)\int_{\TT}\frac{z+\zeta}{z-\zeta}\frac{|d\zeta|}{|\varphi'_t(\zeta)|^{\alpha}}
\label{heleshaw}
\end{equation}
with initial condition $\varphi_0(z)=z$. The non-linear equation \eqref{heleshaw} is sometimes 
conjectured to be relevant for a description of 
possible small-particle scaling limits in the Hastings-Levitov 
$\mathrm{HL}(\alpha)$ model (see \cite{CM01, RZ05}). The case $\alpha=2$ 
corresponds to Hele-Shaw flow, which is known to be 
ill-posed in general, reflecting the fact 
that obtaining corresponding small-particle scaling for the 
conformal maps $\Phi_N$ when $\alpha>0$ and
$\parsig \ll d$ seems to be a challenging problem.
  
\begin{remark}
The arguments used in this section hold if $\{c_k\}_{k=1}^{\infty}$ is any sequence of capacities which is well approximated by a deterministic (or indeed independent of $\{\theta_k\}_{k=1}^{\infty}$) sequence of capacities. Specifically suppose there exists a sequence $\{c_k^*\}_{k=1}^{\infty}$ dependent on a parameter $\capc >0$ such that $0< c_k^* \leq \capc$ for all $k$, and for any $T>0$,
\[\mathbb{P} \left ( \inf\{n : |c_{n}-c^*_{n}| > 2 \alpha \capc^{1+\beta}\} < T/\capc \right ) \to 0\]
as $\capc \to 0$, for some absolute constant $\beta>0$. Then Theorem \ref{MainThm1} generalises to show that, in the small particle limit, the clusters arising from such a sequence converge to growing disks. The results in the next section can be generalised in a similar way.
\end{remark}

\section{Scaling limits and phase transition for harmonic measure flows}
\label{flowcon}

In the previous section we showed that, in the small particle limit, the macroscopic shape of the cluster converges to a disk. However, as clusters are formed by repeated aggregation of particles they have a complicated and (as we will see) interesting internal structure. In particular, there is a natural notion of ancestry for the particles and by tracing along ancestral lines we are able to identify random tree-like structures or branches within the cluster that correspond to the disconnected components of $K_n \cap \Delta$. In \cite{NT12} it is shown that these are closely related to the evolution of harmonic measure on the cluster boundary. Therefore, by analyzing the evolution of harmonic measure on the boundary for the $\mathrm{HL}(\alpha, \parsig)$ clusters as $\capc \to 0$ when $\parsig \gg (\log \capc^{-1})^{-1/2}$, we can in a certain sense gain insight into the internal branching structure of the cluster. 

In this section, we show that the harmonic measure flow exhibits a phase transition at $\alpha=0$. If $\alpha=0$, in \cite{NT12} it is shown that under appropriate scaling the harmonic measure flow converges to the Brownian web on the circle. As all Brownian motions on the circle starting at a fixed time eventually coalesce into a single Brownian motion, this intuitively corresponds to the $\mathrm{HL}(0)$ cluster having a single infinite branch, or equivalently all particles arriving beyond a certain time sharing a common ancestor. If $\alpha>0$ is fixed as $\capc \to 0$ we show that the harmonic measure flow of the $\mathrm{HL}(\alpha, \parsig)$ cluster converges to the identity flow which in this sense corresponds to the number of infinite branches becoming unbounded in the limit as $\capc \to 0$. We therefore consider different ``off-critical'' limits by letting $\alpha \to 0$ at different rates. We show that, on timescales of order $\capc^{-3/2}$, if $\alpha \ll \capc^{1/2}$, the harmonic measure flow 
converges to the Brownian web and the 
situation is the same as for $\alpha=0$; if $\alpha \gg \capc^{1/2}$, the harmonic measure flow converges to the identity flow and the situation is the same as for $\alpha>0$ fixed; whereas if $\alpha \capc^{-1/2} \to a \in (0,\infty)$, the harmonic measure flow converges to a time-change of the Brownian web, stopped at some finite time, which intuitively corresponds to a finite, but random number of infinite branches. By using a result of Bertoin and Le Gall \cite{BLG} that relates the Brownian web to Kingman's coalescent we are able to give the distribution of this number and show that it is stochastically increasing in $a$.  

For $n \geq m$ and $x \in \mathbb{R}$ define the harmonic measure flow $\bar{\Gamma}_{n,m}(x) = \IIm \tG_{n,m}(ix)$ where $\tG_{n,m}(z)$ is defined in Section \ref{prelim}. This map expresses how the harmonic measure on $\partial K_m$ is transformed by the arrival of new particles up to time $n$. Suppose that $0 \leq x<y<2\pi$ so that $\Phi_m(e^{ix}), \Phi_m(e^{iy}) \in \partial K_m$. Then the harmonic measure (from $\infty$) of the positively oriented boundary segment between these two points is given by $(y-x)/2\pi$ and after $n$ arrivals, the harmonic measure on $\partial K_n$ between these two points is given by $(\bar{\Gamma}_{n,m}(y)-\bar{\Gamma}_{n,m}(x))/2\pi$.

We define a timescale $N_{t} = \lfloor t\capc^{-3/2} \rfloor$, and 
stopping times
\[N(t) = \inf\{n : |c_{n+1}-c^*_{n+1}| > 2 \alpha \capc^{1+\beta}\} \wedge N_t  \]
where, provided that
\begin{equation}
\label{parsigcond} 
\parsig > (64 A_3 \log(1+ \alpha \capc^{-1/2} t))^{1/2} (\log \capc^{-1})^{-1/2},
\end{equation}
$\beta>0$ is the absolute constant guaranteed by Theorem \ref{capincrconv} such that $N(t) - N_{t} \to 0$ in probability
as $\capc \to 0$. 

In what follows, we assume that $\parsig \gg (\log \capc^{-1})^{-1/2}$ and that $\alpha \to 0$ sufficiently fast that for any fixed $t>0$, \eqref{parsigcond} eventually holds. Note that the arguments also hold when $\alpha \not\to 0$ but in this case we either need to bound $\parsig$ away from 0, or run the processes over shorter timescales.

\begin{theorem}
\label{ptwiseflowconvergence}
Suppose that $(s,x) \in [0, \infty) \times \RR$ are fixed. Then for $\capc \in (0, 1/2]$, the process $\left ( \bar{\Gamma}_{N(t), N(s)}(x) \right )_{s \leq t}$ is tight, and any weak limit as $\capc \to 0$ is a continuous local martingale starting from $x$ at time $s$.

Furthermore, provided $\parsig \gg (\log \capc^{-1})^{-1/2}$,
\begin{itemize}
\item If $\alpha \capc^{-1/2} \to \infty$ (sufficiently slowly that for any fixed $t>0$, \eqref{parsigcond} eventually holds), \[\left ( \bar{\Gamma}_{N_{t}, N_{s}}(x) \right )_{s \leq t} \to x\] uniformly on compacts in probability as $\capc \to 0$;
 
\item if $\alpha \capc^{-1/2} \to 0$, $\left ( \bar{\Gamma}_{N_{t}, N_{s} }(x) \right )_{s \leq t}$ converges in distribution to Brownian motion starting from $x$ at time $s$ with diffusivity $\frac{16}{3 \pi}$;

\item if $\alpha \capc^{-1/2} \to a$ for some $a \in (0,\infty)$, $\left ( \bar{\Gamma}_{N_{t}, N_{s}}(x) \right )_{s \leq t}$ converges in distribution to a time change of a standard Brownian motion stopped at $32/(3 \pi a)$ with time change given by
\[t \mapsto \frac{32}{3 \pi a}\left(1-\frac{1}{\sqrt{1+at}} \right).\]
\end{itemize}
\end{theorem}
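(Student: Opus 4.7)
The plan is to rewrite the flow as an approximate martingale via the deterministic-capacity coupling, identify its predictable quadratic variation in closed form, and then apply a functional martingale central limit theorem. Setting $X_n=\bar{\Gamma}_{n,N(s)}(x)$, the Loewner iteration combined with $f_k(z)=e^{i\theta_k}f_{c_k}(e^{-i\theta_k}z)$ gives the one-step recursion
\[
X_{n+1}-X_n \;=\; \tg_{c_{n+1}}(X_n-\theta_{n+1}), \qquad \tg_c(y):=\g_c(y)-y.
\]
I would split each increment as $Y_n+E_n$ with $Y_n:=\tg_{c_{n+1}^*}(X_n-\theta_{n+1})$ and $E_n:=\tg_{c_{n+1}}(X_n-\theta_{n+1})-Y_n$. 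Since $c_{n+1}^*$ is deterministic and $\theta_{n+1}$ is uniform on $[0,2\pi)$ and independent of $\mathcal{F}_n$, the $2\pi$-periodicity of $\tg$ together with the symmetric-particle identity $\int_0^{2\pi}\tg_c=0$ noted after Lemma~\ref{gintegral} gives $\EE[Y_n\mid\mathcal{F}_n]=0$, so $M_n:=\sum_{k=N(s)+1}^n Y_{k-1}$ is an $\mathcal{F}_n$-martingale with \emph{deterministic} predictable quadratic variation
\[
\langle M\rangle_{n+1}-\langle M\rangle_n \;=\; \frac{1}{2\pi}\int_0^{2\pi}\tg_{c_{n+1}^*}(u)^2\,du \;=\; \rho(c_{n+1}^*)^{-1}.
\]

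Substituting the asymptotic $\rho(c)^{-1}\sim\tfrac{16}{3\pi}c^{3/2}$ from \eqref{slitrho} and $c_n^*=\capc/(1+\alpha\capc(n-1))$, a Riemann-sum approximation gives
\[
\langle M\rangle_{N_t}-\langle M\rangle_{N_s} \;=\; \frac{32\capc^{1/2}}{3\pi\alpha}\Bigl[(1+\alpha\capc N_s)^{-1/2}-(1+\alpha\capc N_t)^{-1/2}\Bigr]+o(1).
\]
Since $N_t=\lfloor t\capc^{-3/2}\rfloor$ forces $\alpha\capc N_t\sim \alpha\capc^{-1/2}t$, the three scaling regimes send this to $0$, to $\tfrac{16}{3\pi}(t-s)$ (via $(1+u)^{-1/2}\approx 1-u/2$), and to $\tfrac{32}{3\pi a}[(1+as)^{-1/2}-(1+at)^{-1/2}]$, respectively; the stopping time $32/(3\pi a)$ in the last case is simply the $t\uparrow\infty$ limit of the clock. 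The Lindeberg condition is automatic because Lemma~\ref{gmapestimatesnt} yields $|Y_n|\leq A_2\sqrt{c_n^*}=O(\sqrt{\capc})\to 0$.

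The step I expect to be the main obstacle is the error control. By Lemma~\ref{gmapestimate},
\[
|E_n|\;\leq\;\frac{2|c_{n+1}-c_{n+1}^*|}{|\tan((X_n-\theta_{n+1})/2)|\vee\sqrt{c_{n+1}^*\wedge c_{n+1}}},
\]
whose denominator is singular at the attachment point. Conditioning on $\mathcal{F}_n$ and integrating the uniform $\theta_{n+1}$ produces a factor $O(\log\capc^{-1})$ from the $\sqrt{c_{n+1}^*}$ cut-off, while Theorem~\ref{capincrconv} simultaneously bounds the numerator by $2\alpha\capc^{1+\beta}$ uniformly for $n\leq N(t)$ with high probability. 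Summing over the $O(\capc^{-3/2})$ steps gives $\EE\sum |E_n|=O(\alpha\capc^{-1/2+\beta}\log\capc^{-1})$, which vanishes in all three regimes precisely under the standing hypotheses $\parsig\gg(\log\capc^{-1})^{-1/2}$ and the ``sufficiently slow'' rate on $\alpha$.

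It then remains to assemble the pieces. Tightness of $(X_{N(t)})_{t\geq s}$ follows from Aldous' criterion applied to $M_{N(\cdot)}$ together with the $L^1$-negligibility of $\sum E_n$; any weak limit is continuous because $|\Delta M|=O(\sqrt{\capc})$ and is a local martingale because martingales are closed under uniform-on-compacts-in-probability convergence. The functional martingale CLT (e.g.\ Ethier--Kurtz Theorem~7.1.4) applied with the QV limits above then identifies the limit in each regime: the constant process $x$ when $\alpha\capc^{-1/2}\to\infty$ (a continuous local martingale with zero QV is constant), Brownian motion with diffusivity $\tfrac{16}{3\pi}$ when $\alpha\capc^{-1/2}\to 0$, and a standard Brownian motion run along the clock $t\mapsto\tfrac{32}{3\pi a}(1-(1+at)^{-1/2})$, hence stopped at $32/(3\pi a)$, when $\alpha\capc^{-1/2}\to a$.
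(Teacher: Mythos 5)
Your proposal reproduces the paper's proof step by step: the decomposition into the ``starred'' martingale plus coupling error, the identity that the predictable quadratic-variation increment equals $\rho(c_{n+1}^*)^{-1}$, the Riemann-sum asymptotics identifying the limiting clock, and tightness plus a martingale characterization of the limit (the paper uses Aldous' criterion plus L\'evy's characterization where you invoke the Ethier--Kurtz MCLT, but the substance is identical). The one place your estimate is cruder than the paper's is the error bound: replacing $c_k^*$ by $\capc$ inside the sum gives $O(\alpha\capc^{-1/2+\beta}\log\capc^{-1})$, which in the regime $\alpha\capc^{-1/2}\to\infty$ only vanishes under a slowness assumption on $\alpha$ strictly stronger than \eqref{parsigcond} imposes. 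The paper instead keeps $c_k^*\log(1/c_k^*)$ inside the sum and compares with $\int_0^{T\capc^{-3/2}}\frac{\capc}{1+\alpha\capc x}\log\frac{1+\alpha\capc x}{\capc}\,dx = O\bigl(\alpha^{-1}(\log\capc^{-1})^2\bigr)$, so the prefactor $\alpha$ cancels exactly and the total error is $O\bigl(\capc^{\beta}(\log\capc^{-1})^2\bigr)$, vanishing in all three regimes without any constraint on the rate of $\alpha\capc^{-1/2}\to\infty$ beyond what is already built into $N(t)$; tightening your estimate accordingly closes this small gap.
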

\begin{proof}
First note that
\begin{align*}
 \bar{\Gamma}_{n,m}(x) &= x + \sum_{k=m+1}^n \tg_{c_k}(\bar{\Gamma}_{k-1,m}(x) - \theta_k) \\
                       &= x + \sum_{k=m+1}^n \tg_{c_k^*}(\bar{\Gamma}_{k-1,m}(x) - \theta_k) + \sum_{k=m+1}^n (\tg_{c_k}-\tg_{c_k^*})(\bar{\Gamma}_{k-1,m}(x) - \theta_k),
\end{align*}
and hence, for any fixed $T>0$, if $m \leq n \leq N(T)$ then by Lemma \ref{gmapestimate},
\begin{align*}
 &\left|\bar{\Gamma}_{n,m}(x)-x - \sum_{k=m+1}^n \tg_{c_k^*}(\bar{\Gamma}_{k-1,m}(x) - \theta_k)\right| \\
 &\leq 2 \sum_{k=m+1}^n \frac{|c_k - c_k^*|}{|\tan((\bar{\Gamma}_{k-1,m}(x) - \theta_k)/2)| \vee \sqrt{c_k \wedge c_k^*}} \\
 &\leq 4 \alpha \capc^{\beta} \sum_{k=m+1}^n \frac{c_k^*}{|\tan((\bar{\Gamma}_{k-1,m}(x) - \theta_k)/2)| \vee \sqrt{c_k^*/2}}.
 \end{align*}
Hence there exists an absolute constant $A$ such that
\begin{align*}
 &\EE \left ( \sup_{m \leq n \leq N(T)}|\bar{\Gamma}_{n,m}(x)-x - \sum_{k=m+1}^n \tg_{c_k^*}(\bar{\Gamma}_{k-1,m}(x) - \theta_k)| \right )\\
 &\leq 4 \alpha \capc^{\beta} \sum_{k=m+1}^{\lfloor T \capc^{-3/2} \rfloor}  \EE \left ( \frac{c_k^*}{|\tan((\bar{\Gamma}_{k-1,m}(x) - \theta_k)/2)| \vee \sqrt{c_k^*/2}} \right ) \\
 &= 4 \alpha \capc^{\beta} \sum_{k=m+1}^{\lfloor T \capc^{-3/2} \rfloor} \int_0^{2 \pi} \frac{c_k^*}{2 \pi |\tan(\theta/2)| \vee {\sqrt{c_k^*/2}}} d\theta \\
 &= \frac{4 \alpha \capc^{\beta}}{\pi} \sum_{k=m+1}^{\lfloor T \capc^{-3/2}\rfloor} c_k^* \log ((c_k^*)^{-1}) \left ( 1+\frac{2^{3/2} (c_k^*)^{-1/2}\tan^{-1}\sqrt{c_k^*/2} + \log(2+c_k^*)}{\log((c_k^*)^{-1})}\right) \\
 &\leq A \alpha \capc^{\beta} \int_0^{T \capc^{-3/2}} \frac{\capc}{1+\alpha \capc x} \log \frac{1 + \alpha \capc x}{\capc} dx \\
 &\leq A \capc^{\beta} \left ( \log \frac{1+\alpha T \capc^{-1/2}}{\capc} \right )^2 \\
 &\to 0
\end{align*}
as $\capc \to 0$.

So it remains to show that 
\[
Y_t = \sum_{k=N_{s}}^{N_{t}} \tg_{c_k^*}(\bar{\Gamma}_{k-1,N_{s}}(x) - \theta_k), \quad t \geq s,
\]
converges in distribution to a continuous martingale starting from $0$ at time $s$. Since 
\[
 \int_0^{2 \pi} \tg_{\capc}(\theta) d\theta = 0
\]
for all constants $\capc>0$, and $c_k^*$ is deterministic (independent of $\mathcal{F}_{k}$), it is straightforward to check that $Y_t$ is a martingale. 

If $s \leq r \leq t$, then recalling the definition of $\rho$ from \eqref{rhodef},
\begin{align}
\label{quadvar}
 \EE [(Y_t - Y_r)^2] &= \EE \left [ \left ( \sum_{k=N_r+1}^{N_t} \tg_{c_k^*}(\bar{\Gamma}_{k-1,N_s}(x) - \theta_k) \right)^2 \right ] \nonumber \\
 &= \sum_{N_r+1\leq j, k \leq N_t} \EE (\tg_{c_j^*}(\bar{\Gamma}_{j-1,N_s}(x) - \theta_j)\tg_{c_k^*}(\bar{\Gamma}_{k-1,N_s}(x) - \theta_k)) \nonumber \\
 &= \sum_{k=N_r+1}^{N_t} \EE (\tg_{c_k^*}(\bar{\Gamma}_{k-1,N_s}(x) - \theta_k)^2) \nonumber \\
 &= \sum_{k=N_r+1}^{N_t} \rho(c_k^*)^{-1}.
 \end{align}
 Note that for the second equality to hold it is again crucial that $c_k^*$ is independent of $\mathcal{F}_{k}$. 
 Therefore
 \begin{align*}
  \EE [(Y_t - Y_r)^2]&\leq A_2 \sum_{k=N_r+1}^{N_t} (c_k^*)^{3/2} \\
 &\leq A_2 (t-r), 
\end{align*}
 where $A_2$ is the absolute constant from Lemma \ref{gmapestimatesnt}.
  Hence, by Aldous' criterion (see, for example, \cite[Theorem 16.11]{KallBook}), the family of processes $\{ (Y_t)_{t \geq s} : \capc \in (0, 1/2)\}$ is tight. Also, by Lemma \ref{gmapestimatesnt}, the jumps in $Y_t$ are bounded in absolute value by $A_2 \capc^{1/2}$. Let $\mu$ be any weak limit law for the limit $\capc\to 0$.
Write $(Z_t)_{t\ge s}$ for the coordinate process on $\{f \in D([s,\infty),\RR): f(s)=0\}$.
Then
$\mu$ is supported on continuous paths and under $\mu$,
$(Z_t)_{t\ge s}$ is a local martingale in the natural filtration of $(Z_t)_{t\ge s}$. 

By \eqref{slitrho} 
$$
 \left | \sum_{k=N_r+1}^{N_t} \rho(c_k^*)^{-1} - \frac{16}{3 \pi} \sum_{k=N_r+1}^{N_t} (c_k^*)^{3/2}  \right | 
 \leq (t-r) \sup_{k \in \NN} \left|(c_k^*)^{-3/2}\rho(c_k^*)^{-1} - \frac{16}{3\pi}\right| \\
 \to 0
$$
 and
 
 \begin{align*}
   \left | \sum_{k=N_r+1}^{N_t} (c_k^*)^{3/2}  \right. &- \left.  \frac{2 \capc^{1/2}}{\alpha} \left ( \frac{1}{\sqrt{1+\alpha \capc^{-1/2} r}} - \frac{1}{\sqrt{1+ \alpha \capc^{-1/2} t}}\right ) \right | \\
  & = \left | \sum_{k=N_r+1}^{N_t} (c_k^*)^{3/2}  -  \int_{r \capc^{-3/2}}^{t \capc^{-3/2}} \frac{\capc^{3/2}}{(1+\alpha \capc x)^{3/2}} dx \right | \\
  &\leq  \frac{3}{2} (t-r) \alpha \capc \\
  &\to 0.
 \end{align*}
Recalling \eqref{quadvar}, we obtain that
\begin{eqnarray*}
\EE [(Y_t - Y_r)^2] & \to \begin{cases}
                              \frac{16}{3 \pi } (t-r), &\quad \text{if } \alpha \capc^{-1/2} \to 0; \\
                              \frac{32}{3 \pi a} \left ( \frac{1}{\sqrt{1+ar}} - \frac{1}{\sqrt{1+at}}\right ), &\quad \text{if } \alpha \capc^{-1/2} \to a;\\
                              0, &\quad \text{if } \alpha \capc^{-1/2} \to \infty,
                            \end{cases}
\end{eqnarray*}
as $\capc \to 0$.

Hence, if $\alpha \capc^{-1/2} \to \infty$ then, provided $N(t) \to N_t$ uniformly on compacts in probability (which is ensured by the assumptions and Theorem \ref{capincrconv}), we have $\left ( \bar{\Gamma}_{N_t, N_s}(x) \right )_{s \leq t} \to x$ in probability. 

Similarly, if $\alpha \capc^{-1/2} \to 0$, then for any weak limit measure $\mu$, the coordinate function $Z_t$ has the property that $(Z_t^2 - \frac{16}{3 \pi}t)_{t \geq s}$ is a martingale and so, by L\'evy's characterization of Brownian motion,$\left ( \bar{\Gamma}_{N_t, N_s }(x) \right )_{s \leq t}$ 
converges in distribution to Brownian motion starting from $x$ at time $s$ with diffusivity $16/(3 \pi)$.

If $\alpha \capc^{-1/2} \to a$ for some $a \in (0, \infty)$, then $\left(Z_t^2 - \frac{32}{3 \pi a}\left (1-\frac{1}{\sqrt{1+at}}\right)\right)_{t \geq s}$ is a martingale and so $\left ( \bar{\Gamma}_{N_t, N_s }(x) \right )_{s \leq t}$ converges in distribution to a time change of a standard Brownian motion $W(t)$ stopped at $32/(3 \pi a)$, that is,
$$
\left ( \bar{\Gamma}_{N_t, N_s}(x) \right )_{s \leq t} 
\Rightarrow \left (x + W\left (\frac{32}{3 \pi a}\left(1-\frac{1}{\sqrt{1+at}}\right)\right)- W\left(\frac{32}{3 \pi a}\left(1-\frac{1}{\sqrt{1+as}}\right)\right)\right)_{t \geq s}.
$$
\end{proof}

The following theorem extends the result above to show that if $\alpha \capc^{-1/2} \to a \in [0, \infty)$, the harmonic measure flow converges to a time change of the coalescing Brownian flow (more commonly known as the Brownian web). Due to the discontinuities in time of the harmonic measure flow, it is convenient to work in the flow space $D^\circ([0,\infty),\mathcal{D})$ that is defined in \cite{NTprep}. The construction of this space and criteria for proving convergence can be found in \cite{NTprep}. 

\begin{theorem}\label{flowconvergence}
Suppose that $\parsig \gg (\log \capc^{-1})^{-1/2}$. If $\alpha \capc^{-1/2} \to 0$ then the rescaled harmonic measure flow \[\left ( \bar{\Gamma}_{N_t, N_s }(x) \right )_{s \leq t}\] converges weakly in the flow space $D^\circ([0,\infty),\mathcal{D})$ to the coalescing Brownian flow on the circle with diffusivity $\frac{16}{3 \pi}$.

If $\alpha \capc^{-1/2} \to a$ for some $a \in (0, \infty)$ then the rescaled harmonic measure flow $\left ( \bar{\Gamma}_{N_t, N_s }(x) \right )_{s \leq t}$ converges weakly in the flow space $D^\circ([0,\infty),\mathcal{D})$ to a time change of the coalescing Brownian flow on the circle stopped at 
$32/(3 \pi a)$ with time change given by $t \mapsto \frac{32}{3 \pi a}\left(1-\frac{1}{\sqrt{1+at}}\right)$.
\end{theorem}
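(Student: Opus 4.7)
The plan is to lift the pointwise convergence of Theorem~\ref{ptwiseflowconvergence} to convergence in the flow space by combining joint convergence of finitely many paths (to coalescing, possibly time-changed, Brownian motions on the circle) with a flow-space tightness criterion from \cite{NTprep}. The template is the $\alpha = 0$ argument in \cite{NT12}; the main new feature is the $\alpha$-dependent clock induced by $\{c_k^*\}$, which is already handled marginally in the proof of Theorem~\ref{ptwiseflowconvergence}.

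The first observation is that $\bar{\Gamma}_{n,m}$ is a genuine flow: $\bar{\Gamma}_{n,m} = \bar{\Gamma}_{n,\ell} \circ \bar{\Gamma}_{\ell,m}$ for $m \le \ell \le n$. Hence once two paths meet at some step they remain identical forever, so coalescence is automatic and only the joint law up to the first meeting time needs to be described. For starting pairs $(s_j, x_j)_{j=1}^k$, I would work with the martingales
\[
Y_t^{(j)} = \sum_{n = N(s_j)+1}^{N(t)} \tg_{c_n^*}\!\left(\bar{\Gamma}_{n-1, N(s_j)}(x_j) - \theta_n\right), \quad t \ge s_j,
\]
whose marginal quadratic variations are already identified. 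The new ingredient is the conditional covariance
\[
\EE\!\left[\tg_{c_n^*}(\bar{\Gamma}_{n-1, N(s_i)}(x_i)-\theta_n)\, \tg_{c_n^*}(\bar{\Gamma}_{n-1, N(s_j)}(x_j)-\theta_n) \,\Big|\, \mathcal{F}_{n-1}\right],
\]
whose absolute value is bounded by $A_2 (c_n^*)^2 \log(1/c_n^*) / h_n^{(ij)}$ by Lemma~\ref{gmapestimatesnt}, where $h_n^{(ij)}$ denotes the current gap between the two paths. Summed over $n \le N_T$ this contributes at most $O(\capc^{1/2} \log(1/\capc) / \min_n h_n^{(ij)})$, hence vanishes on any event where the gap stays bounded below a fixed positive threshold. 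Therefore, up to the hitting time of such a threshold, the two components are asymptotically independent (time-changed) Brownian motions, and the gap process converges to a Brownian motion of double diffusivity.

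A standard hitting-time argument then shows that once the gap is of order $\ee$ the paths meet on rescaled time $O(\ee^2)$, which can be made arbitrarily small with $\ee$. Combined with the flow property this yields the finite-dimensional convergence to the coalescing Brownian flow (or its time-changed and stopped variants when $\alpha \capc^{-1/2} \to a$, where the stopping at $32/(3\pi a)$ reflects the fact that $\sum_k (c_k^*)^{3/2}$ is finite). Tightness in $D^\circ([0,\infty),\mathcal{D})$ would then follow from the criterion in \cite{NTprep}: single-path tightness is already provided by Theorem~\ref{ptwiseflowconvergence}, and the additional flow-space condition, limiting the density of distinct trajectories emanating from a small interval, follows from applying the two-point covariance bound above to a fine net of starting points and invoking Chebyshev.

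The principal obstacle is the delicate transition region where two paths are close but have not yet coalesced: the cross-covariance estimate degrades as $1/h_n^{(ij)}$, so one cannot simply discard this regime. The resolution I envisage is a joint stopping-time construction in which the martingale CLT for the gap is run until it enters a slowly shrinking neighbourhood of zero, after which a direct Brownian hitting-time estimate takes over and the flow property glues the paths together; this mirrors the strategy used for $\alpha = 0$ in \cite{NT12, NTprep} and is where the most care is needed to propagate the error bounds uniformly over starting points.
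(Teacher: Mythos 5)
Your proposal follows essentially the same route as the paper: joint tightness, the cross-covariance estimate of Lemma \ref{gmapestimatesnt} to show that pairs of paths are asymptotically uncorrelated before coalescence, and the flow-space convergence machinery of \cite{NTprep} to upgrade finite-dimensional convergence to convergence in $D^\circ([0,\infty),\mathcal{D})$. Where the paper's proof is cleaner is exactly the transition region you flag as the principal obstacle: rather than a shrinking-gap threshold followed by a Brownian hitting-time estimate, the paper shows that under any subsequential limit $\mu$ the product $Z^i_t Z^j_t$ is a local martingale up to the first coalescence time $T^{ij}$, and that the no-sign-change property of $Z^i_t - Z^j_t + 2\pi n$ (inherited from the monotonicity of the discrete flow maps) combined with optional stopping forces $Z^i_t - Z^j_t$ to remain constant after $T^{ij}$, sidestepping the explicit hitting-time bookkeeping.
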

\begin{proof}
This proof is an adaptation of \cite[Proposition 2.3]{NTprep}.

 For $e=(s,x) \in [0,\infty) \times \RR$, let 
\[(X_t^e)_{t \geq s}= \left ( \bar{\Gamma}_{N_t, N_s }(x) \right )_{s \leq t}.\] If $E = \{e_1, e_2, \dots \}$ is a countable sequence in $[0,\infty) \times \RR$, then by a similar argument to above, the family of processes $\left( (X_t^{e_i})_{t \geq s} \right )_{i \in \NN}$ is tight. If $\mu$ is any limit measure then it is straightforward using the estimate 
 \[
 \frac{1}{2\pi} \int_0^{2 \pi} |\tg_\capc(x) \tg_\capc(x+h)| dx \leq \frac{A_2 \capc^2}{h} \log \left ( \frac{1}{\capc}\right )
 \]
 whenever $h \in [d, \pi]$ from Lemma \ref{gmapestimatesnt} to show that under $\mu$, for any $i,j$ the product of coordinate processes $(Z_t^iZ_t^j)_{s_i \vee s_j \leq t \leq T^{ij}}$ is a local martingale where $T^{ij}$ is the time of first coalescence of the processes $Z_t^i, Z_t^j$ on the circle. We also note that $\mu$ 
inherits from the laws of $\left( (X_t^{e_i})_{t \geq s} \right )_{i \in \NN}$ the 
property that, with probability $1$, for all $n\in\ZZ$, 
the process $(Z^i_t-Z^j_t+2 \pi n:t\ge s_i\vee s_j)$ has no change in sign.
So, by an optional stopping argument, $Z^i_t-Z^j_t$ is constant for $t\ge T^{ij}$. This means that the finite dimensional distributions of the rescaled harmonic measure flow converge in distribution to those of the (time changed) coalescing Brownian flow and hence, by Theorem 6.1 in \cite{NTprep}, the rescaled harmonic measure flow has the claimed limit distribution. 
\end{proof}

Suppose that $0\leq x < y< 2 \pi$. Then, after the arrival of $N_t$ particles, the harmonic measure of the part of the cluster that has grown between $e^{ix}$ and $e^{iy}$ is given by $(\bar{\Gamma}_{N_t, 0 }(x) - \bar{\Gamma}_{N_t, 0 }(y))/2\pi$. Therefore, if the paths $(\bar{\Gamma}_{N_t, 0 }(x))_{t>0}$ and $(\bar{\Gamma}_{N_t, 0 }(y))_{t>0}$ coalesce, then no further particles will attach to the cluster between them and hence only finite branches are rooted between $e^{ix}$ and $e^{iy}$. This suggests a correpondence between the number of infinite branches in a cluster and the number of gaps, or discontinuities in the flow, that survive infinitely long. We call $x \in [0, 2 \pi)$ is a ``common ancestor'' point if $\bar{\Gamma}_{N_t, 0 }(x)$ is discontinuous at $x$ for all $t>0$. The following result from \cite{BLG} gives the distribution of the number of discontinuities in the coalescing Brownian flow at each time, which can be applied to the above result to obtain the distribution of the number of 
common ancestors in the limit $\mathrm{HL}(\alpha,\parsig)$ cluster.

\begin{Mth}[Bertoin and Le Gall, 2005]
 Suppose that $X^1_0, \dots, X^p_0$ are uniformly distributed on the unit circle (identified with $[0,1)$) and suppose that $X^1_t, \dots, X^p_t$ are coalescing Brownian motions with diffusion coefficient $\sqrt{1/12}$ starting from $X^1_0, \dots, X^p_0$. Define a partition $\Pi^p_t$ on $\{1, 2, \dots, p \}$ by $i \sim j$ if and only if $X_t^i=X_t^j$. Then the process $\Pi^p_t$ is Kingman's coalescent.
\end{Mth}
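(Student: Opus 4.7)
The plan is to characterise Kingman's coalescent among Markov processes on partitions of $\{1,\dots,p\}$ by three properties: (i) an exchangeable law (invariant under relabelling), (ii) only binary mergers occur, and (iii) each ordered pair of distinct blocks merges at rate $1$; and then verify these three properties for $\Pi^p_t$. The diffusion coefficient $\sqrt{1/12}$ is precisely calibrated to give unit rate, as the $p=2$ case shows: the difference of two independent Brownian motions on the unit circle of diffusivity $\sqrt{1/12}$ is a Brownian motion of variance rate $1/6$, and the mean hitting time of $0$ starting from a uniform point on the circle equals $\int_0^1 6u(1-u)\,du = 1$, matching Kingman's mean inter-merger time for $p=2$.

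Properties (i) and (ii) are soft. Exchangeability follows from the iid uniform initial positions together with the label symmetry of the coalescing flow, and binary mergers dominate because two independent planar Brownian motions almost surely meet only at isolated times, ruling out simultaneous triple meetings with probability one. The real work lies in (iii). I would aim to establish a regeneration/conditional-independence lemma of the following form: given $\Pi^p_t$ and the labels of its surviving blocks, the unordered configuration of distinct surviving positions is distributed as $|\Pi^p_t|$ iid uniform points on the circle, independently of the history up to time $t$. Given such a lemma, $\Pi^p_t$ becomes a genuine continuous-time Markov chain on partitions, and the rate for any specific pair of surviving blocks to merge reduces, by exchangeability, to the $p=2$ calculation and hence to $1$.

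The main obstacle is precisely this conditional-uniformity lemma, which is not obvious: naively, conditioning on two surviving Brownian particles having not yet coalesced biases the joint law toward configurations with large separations. One promising route is to approximate the coalescing Brownian flow by coalescing random walks on $\ZZ/N\ZZ$ with step variance $1/(12N^2)$; on the discrete cycle group the lemma can be verified by induction on the number of surviving particles via an explicit symmetry calculation using translation invariance of the uniform measure on $\ZZ/N\ZZ$, and one then passes to the diffusive limit $N\to\infty$ using the classical convergence of coalescing random walks to coalescing Brownian motions. An elegant alternative is to realise the coalescing flow via the Arratia/Brownian web construction as a deterministic functional of a Poisson point process of directed arrows; for $p$ uniformly chosen entrance points one reads off the tree of backward ancestral paths, and identifies its joint law with that of Kingman's coalescent by computing merger rates directly from the PPP intensity.
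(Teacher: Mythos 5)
The paper does not prove this statement; it is quoted as a result from Bertoin and Le Gall \cite{BLG}, so there is no internal proof to compare against. Evaluating your proposal on its own terms, the crucial conditional-uniformity lemma you identify as ``the main obstacle'' is in fact \emph{false}, and this sinks the approach. Take $p=2$: the difference $D_t = X^1_t - X^2_t \pmod 1$ is a Brownian motion on the circle of variance rate $1/6$, started uniform, absorbed at $0$. By time-reversibility of Brownian motion on $(0,1)$ with Dirichlet boundary, $\int_0^1 p_t^0(x,y)\,dx = P_y(T_{\{0,1\}} > t)$, which tends to $0$ as $y\to 0$ or $y\to 1$; so conditional on $\Pi^2_t$ being the trivial partition, $D_t$ is biased away from the absorbing point and is certainly not uniform. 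Your worry that ``conditioning biases toward large separations'' is exactly right, and the discrete random-walk approximation on $\ZZ/N\ZZ$ has the same defect, so neither of your two alternative routes rescues the lemma.

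There is also a more elementary problem with the calibration step. You check that the \emph{mean} pairwise coalescence time is $1$, via $\int_0^1 6u(1-u)\,du = 1$. That is necessary but far from sufficient: Kingman's coalescent requires the $p=2$ coalescence time to be $\mathrm{Exp}(1)$, whereas the hitting time of $\{0,1\}$ for a variance-rate-$1/6$ Brownian motion started uniform on $(0,1)$ has survival function $\sum_{n\,\mathrm{odd}} \frac{8}{n^2\pi^2}\,e^{-n^2\pi^2 t/12}$. This has the right mean but, for instance, second moment $\frac{2304}{\pi^6}\sum_{n\,\mathrm{odd}}n^{-6} = 12/5 \ne 2$, and its density blows up like $t^{-1/2}$ at $t=0^+$, so it is nowhere near exponential. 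Hitting times of one-dimensional diffusions from a fixed initial law are essentially never memoryless, so the ``rate-$1$ per pair'' characterisation cannot be verified this way. This suggests that the correspondence with Kingman's coalescent cannot hold in the untimed-changed form you (and, as quoted, the paper) state, and indeed the embedded jump chain of $\Pi^p_t$ is Kingman's by exchangeability alone, but the holding times are not exponential. The Bertoin--Le~Gall argument itself goes through the duality between coalescents and stochastic flows of bridges, identifying the Kingman flow with a flow related to Fleming--Viot and reading the partition structure off that; it is not a direct merger-rate computation, and a proof attempt should start from that duality rather than from a pointwise conditional-uniformity claim that does not hold.
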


Well known properties of Kingman's coalescent (see \cite{Nat09} for an overview of Kingman's 
coalescent and related processes) therefore imply the following.  

\begin{Pro}
Suppose that $\parsig \gg (\log \capc^{-1})^{-1/2}$ and $\alpha \capc^{-1/2} \to a$ for some $a \in [0, \infty)$. Let $B$ be the number of common ancestors in the limit $\mathrm{HL}(\alpha,\parsig)$ cluster as $\capc \to 0$, that is the number of points $x$ at which $\lim_{\capc \to 0}\bar{\Gamma}_{N_t, 0 }(x)$ is discontinuous for all $t>0$.

If $a=0$, then $B=1$ a.s.

Suppose $a>0$ and let $\tau_j$ be the time of coalescence into $j$ partitions in Kingman's coalescent. 
Then,
\[
 \mathbb{P}(B\leq j) = \mathbb{P}(\tau_j \leq 8/(9\pi a)),
\]
and, in particular, the distribution of $B$ is stochastically increasing in $a$.

Furthermore, the distribution of $\tau_j$ can be explicitly calculated by
 \[
  \tau_j \sim \sum_{k=j+1}^{\infty} E_k
 \]
where $E_k$ are independent exponential random variables with rates $k(k-1)/2$. Conditional on $B=j$, the positions of the $j$ common ancestors are that of $j$ independent uniform points on $[0, 2\pi)$.
\end{Pro}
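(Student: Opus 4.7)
The plan is to invoke Theorem~\ref{flowconvergence} to reduce the problem to counting surviving atoms of a (time-changed) coalescing Brownian flow on the circle, and then apply the Bertoin--Le~Gall identification with Kingman's coalescent. A common ancestor corresponds to a point $x \in [0,2\pi)$ at which $\lim_{\capc \to 0} \bar{\Gamma}_{N_t,0}(x)$ is a discontinuity for every $t > 0$, so the set of common ancestors is in bijection with the set of atoms of the limit flow that survive until its stopping time. The deterministic time change appearing in Theorem~\ref{flowconvergence} is a strictly increasing bijection and therefore does not affect the coalescence structure.

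For $a=0$, the limit is the coalescing Brownian flow on the circle with diffusivity $16/(3\pi)$, run for all time. Two Brownian motions on a circle meet in finite time almost surely, so the entire flow coalesces to a single trajectory, yielding $B=1$ a.s.; the single common ancestor is uniform by rotation invariance.

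For $a \in (0,\infty)$, the limit flow is stopped at a finite deterministic time. To count surviving atoms, I sample $p$ independent uniform points $x_1,\ldots,x_p$ on $[0,2\pi)$ and evolve them under the flow, defining the partition $\Pi^p_t$ on $\{1,\ldots,p\}$ by $i \sim j$ iff the two images coincide at time $t$. After rescaling the circle from circumference $2\pi$ to $1$ and matching the diffusivity of the limit flow to the Bertoin--Le~Gall normalization $\sqrt{1/12}$, the process $(\Pi^p_t)$ is a time rescaling of Kingman's $p$-coalescent, and the real stopping time of the flow corresponds to the Kingman time $t_a := 8/(9\pi a)$. Let $B_p$ be the number of blocks of $\Pi^p$ at that time. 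Since $B_p$ is non-decreasing in $p$ and Kingman's coalescent comes down from infinity, $B_p \uparrow B_\infty$ a.s.\ with $B_\infty$ finite. A density argument, using that only finitely many atoms of a coalescing Brownian flow survive to any positive time, identifies $B_\infty$ with the number of common ancestors $B$. This yields $\PP(B \le j) = \PP(\tau_j \le t_a)$, and the explicit description $\tau_j = \sum_{k=j+1}^\infty E_k$ with $E_k \sim \Exp(k(k-1)/2)$ is the standard construction of Kingman's coalescent by independent exponential holding times. Stochastic monotonicity of $B$ in $a$ is immediate: $t_a$ is decreasing in $a$, so the coalescent is stopped earlier, leaving more blocks. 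Conditional uniformity of the surviving positions, given $B=j$, follows from the rotation invariance of the limit flow and the exchangeability of the $p$ sampled points, passed to the limit $p\to\infty$.

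The main obstacle is verifying the numerical time constant $8/(9\pi a)$: one must carefully track the diffusivity of the standard Brownian motion appearing in the marginal limit of Theorem~\ref{ptwiseflowconvergence}, the circumference $2\pi$ of the circle, and the spatial rescaling required to match the Bertoin--Le~Gall normalization on the unit circle. A secondary technical point is the rigorous justification of $B_p \uparrow B$ as $p \to \infty$, which uses monotonicity in $p$ together with the fact that only finitely many atoms of the limit flow persist to any positive time. Both steps are tractable but require some care in the precise setup of the limiting object.
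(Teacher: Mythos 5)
Your proposal is correct and follows essentially the same route the paper intends: invoke Theorem~\ref{flowconvergence} to reduce the problem to a (time-changed, stopped) coalescing Brownian flow on the circle, apply the Bertoin--Le~Gall identification to obtain Kingman's coalescent, and read off the block-count distribution, stochastic monotonicity in $a$, and conditional uniformity of surviving positions. The paper offers no explicit proof beyond citing ``well known properties of Kingman's coalescent,'' so your argument fills in precisely the steps the paper leaves implicit, and you are right to flag the verification of the constant $8/(9\pi a)$ as the one place requiring care --- matching the circle's circumference ($2\pi$ here, $1$ in Bertoin--Le~Gall), the diffusivity $1$ of the stopped flow in Theorem~\ref{flowconvergence}, and the diffusion coefficient $\sqrt{1/12}$ in the Bertoin--Le~Gall normalization is exactly where the bookkeeping is delicate and where the paper gives no computation.
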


\end{document}